\documentclass[intlimits,12pt,reqno]{amsart}
\usepackage{latexsym,amsthm,amsmath,amssymb,mathrsfs,mathtools}
\usepackage[T1]{fontenc}
\usepackage[utf8]{inputenc}
\usepackage[mathscr]{eucal}
\usepackage{enumerate}
\usepackage{enumitem}
\usepackage{color}
\usepackage{ esint }
\usepackage[colorlinks	=	true,						
          	 	linkcolor	=	red,
            	urlcolor	=	red,
            	citecolor	=	red]%
            	{hyperref}
\usepackage{cleveref}

\usepackage{todonotes}								

\newtheorem{theorem}{Theorem}[section]
\newtheorem{lemma}[theorem]{Lemma}
\newtheorem{corollary}[theorem]{Corollary}
\newtheorem{proposition}[theorem]{Proposition}
\theoremstyle{definition}
\newtheorem{remark}[theorem]{Remark}
\newtheorem{definition}[theorem]{Definition}
\newtheorem{example}[theorem]{Example}

\newcommand{\barint}{
\rule[.036in]{.12in}{.009in}\kern-.16in \displaystyle\int }

\newcommand{\barcal}{\mbox{$ \rule[.036in]{.11in}{.007in}\kern-.128in\int $}}

\newcommand{\bbbn}{\mathbb N}
\newcommand{\bbbr}{\mathbb R}

\def\diam{\operatorname{diam}}

\def\H{{\mathcal H}}

\def\lip{{\rm lip\,}}

\def\mod{\operatorname{mod}}


\DeclareMathOperator{\Mod}{mod}

\newcommand{\LIP}{{\rm LIP}}

\newcommand{\aint}[2][]{
	\ifthenelse{\equal{#1}{}}%
					{%
\mathchoice%
      {\mathop{\kern 0.2em\vrule width 0.6em height 0.69678ex depth -0.58065ex
              \kern -0.8em \intop}\nolimits_{\kern -0.45em#2}^{#1}}%
      {\mathop{\kern 0.1em\vrule width 0.5em height 0.69678ex depth -0.60387ex
              \kern -0.6em \intop}\nolimits_{#2}^{#1}}%
      {\mathop{\kern 0.1em\vrule width 0.5em height 0.69678ex depth -0.60387ex
              \kern -0.6em \intop}\nolimits_{#2}^{#1}}%
      {\mathop{\kern 0.1em\vrule width 0.5em height 0.69678ex depth -0.60387ex
              \kern -0.6em \intop}\nolimits_{#2}^{#1}}}%
					{%
\mathchoice%
      {\mathop{\kern 0.2em\vrule width 0.6em height 0.69678ex depth -0.58065ex
              \kern -0.8em \intop}\nolimits_{\kern -0.45em#1}^{#2}}%
      {\mathop{\kern 0.1em\vrule width 0.5em height 0.69678ex depth -0.60387ex
              \kern -0.6em \intop}\nolimits_{#1}^{#2}}%
      {\mathop{\kern 0.1em\vrule width 0.5em height 0.69678ex depth -0.60387ex
              \kern -0.6em \intop}\nolimits_{#1}^{#2}}%
      {\mathop{\kern 0.1em\vrule width 0.5em height 0.69678ex depth -0.60387ex
              \kern -0.6em \intop}\nolimits_{#1}^{#2}}}}

\title{Coarea Inequality for Monotone Functions on Metric Surfaces}
\author{Behnam Esmayli, Toni Ikonen, Kai Rajala}
\address{B.\ Esmayli: Department of Mathematics and Statistics, University of Jyv\"askyl\"a, P.O. Box 35 (MaD), FI-40014, University of Jyv\"askyl\"a, Finland. {\tt behnam.b.esmayli@jyu.fi}}
\address{T.\ Ikonen: Department of Mathematics and Statistics, University of Jyv\"askyl\"a, P.O. Box 35 (MaD), FI-40014, University of Jyv\"askyl\"a, Finland. {\tt toni.m.h.ikonen@jyu.fi }}
 \address{K.\ Rajala: Department of Mathematics and Statistics, University of Jyv\"askyl\"a, P.O. Box 35 (MaD), FI-40014, University of Jyv\"askyl\"a, Finland. {\tt kai.i.rajala@jyu.fi}}

\setlength{\oddsidemargin}{1pt}
\setlength{\evensidemargin}{1pt}
\setlength{\topmargin}{1pt}       
\setlength{\textheight}{650pt}    
\setlength{\textwidth}{460pt}     

\belowdisplayskip=18pt plus 6pt minus 12pt \abovedisplayskip=18pt
plus 6pt minus 12pt
\parskip 8pt plus 1pt

\begin{document}

\thanks{Research supported by the Academy of Finland, project number 308659. The second named author was also supported by the Vilho, Yrjö and Kalle Väisälä Foundation.
\newline {\it 2020 Mathematics Subject Classification.} 28A25, 28A75, 28A78, 30L10, 30L15. } 
\maketitle


\begin{abstract}
   We study coarea inequalities for metric surfaces --- metric spaces that are topological surfaces, without boundary, and which have locally finite Hausdorff 2-measure $\mathcal{H}^2$. For monotone Sobolev functions $u\colon X \to \bbbr $, we prove the inequality
  \begin{equation*}
    \int_{ \mathbb{R} }^{*}
    \int_{ u^{-1}(t) }
        g
    \,d\mathcal{H}^{1}
    \,dt
    \leq
    \kappa
    \int_{ X }
        g \rho
    \,d\mathcal{H}^{2}
    \quad\text{for every Borel $g \colon X \rightarrow \left[0,\infty\right]$,}
\end{equation*}
where $\rho$ is any integrable upper gradient of $u$. If $\rho$ is locally $L^2$-integrable, we obtain the sharp constant $\kappa=4/\pi$. The monotonicity condition cannot be removed as we give an example of a metric surface $X$ and a Lipschitz function $u \colon X \to \bbbr$ for which the coarea inequality above fails.
\end{abstract}

\section{Introduction}
In this paper we study extensions of the classical \textit{coarea formula} to the class of Sobolev functions in metric spaces that are topologically and measure theoretically two-dimensional. Our research is partially motivated by recent developments on the uniformization problem in metric spaces which largely depend on such extensions, cf. \cite{Bo:Kle:02,Raj:17,LytWen20,Iko:21,Mei:Wen:21,Nta:Rom:21,Nta:Rom:22-nolength}.   

The coarea formula was first proved for Lipschitz maps between Euclidean subsets by Federer \cite{Fed59}. Several generalizations have been proven for Sobolev maps. Concentrating on the two-dimensional setting, we contend ourselves with the following special case.
\begin{theorem}[Coarea formula, \cite{Mal:Swa:Zie:03}]
\label{co-fo}
Let $\Omega \subset \mathbb{R}^2$ be open and $u \colon \Omega \rightarrow \mathbb{R}$ be a continuous function in the Sobolev class $W^{1,1}( \Omega )$. Then
$$
\int_{-\infty}^{+\infty} \int_{u^{-1}(t)} g\, d\H^1dt = \int_{\Omega} g|\nabla u|\, dx \quad \text{for every Borel $g\colon \Omega\to [0,\infty]$.} 
$$
\end{theorem}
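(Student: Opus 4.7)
The plan is to reduce to the Fleming--Rishel coarea identity for BV functions, which applies to $u \in W^{1,1}(\Omega) \subset BV(\Omega)$ and yields
\begin{equation*}
\int_V |\nabla u|\, dx \;=\; \int_{-\infty}^{+\infty} P\bigl(\{u > t\}, V\bigr)\, dt
\end{equation*}
for every open $V \subset \Omega$, where $P(\,\cdot\,, V)$ denotes perimeter in $V$ and I have used that the distributional derivative $Du$ is absolutely continuous with density $\nabla u$.

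Next I would use continuity of $u$ to identify $P(\{u>t\}, V)$ with $\H^1(u^{-1}(t) \cap V)$ for almost every $t$. By De Giorgi's structure theorem $P(\{u>t\}, V) = \H^1(\partial^* \{u>t\} \cap V)$, and continuity immediately gives $\partial^* \{u>t\} \subset u^{-1}(t)$. The reverse estimate $\H^1(u^{-1}(t) \setminus \partial^*\{u>t\}) = 0$ for a.e.\ $t$ is the technical heart of the argument: on the "flat" set $u^{-1}(t) \setminus \partial^*\{u>t\}$ the function $u$ is in a measure-theoretic sense locally extremal, and the collection of such critical levels must be $\H^1$-negligible after integration in $t$. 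This is a Sard-type statement that is controlled via the absolute continuity of $u$ on almost every line, together with the one-dimensional coarea/Banach indicatrix formula applied slice by slice.

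Combining the two steps yields the coarea formula with $g = \chi_V$ for arbitrary open $V \subset \Omega$. I would then extend to $g = \chi_E$ for arbitrary Borel $E$ via the Borel regularity of the measures $\H^1\lfloor u^{-1}(t)$ and Lebesgue measure (both inner and outer regular), to nonnegative simple $g$ by linearity, and to general nonnegative Borel $g$ by monotone convergence. I expect the main obstacle to be the perimeter-level set identification: while one containment is immediate from continuity, ruling out hidden mass on "flat" portions of level sets simultaneously for almost every $t$ is the delicate analytic step where the $W^{1,1}$ regularity, and not merely continuity, is essential.
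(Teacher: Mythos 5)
Two framing remarks first: the paper does not prove \Cref{co-fo} at all --- it is quoted verbatim from \cite{Mal:Swa:Zie:03} --- so your attempt is measured against the literature rather than against an argument in the text. Your skeleton is a reasonable alternative route for the scalar case: Fleming--Rishel for $u\in W^{1,1}\subset BV$, the De~Giorgi/Federer identification $P(\{u>t\},V)=\H^1(\partial^*\{u>t\}\cap V)$, the inclusion $\partial^*\{u>t\}\subset\partial\{u>t\}\subset u^{-1}(t)$ from continuity, and the extension from $g=\chi_V$ to Borel $g$ by regularity and monotone convergence. This much is correct and already yields the inequality $\int_V|\nabla u|\,dx\le\int_{\mathbb{R}}\H^1(u^{-1}(t)\cap V)\,dt$.

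The genuine gap is the step you yourself flag as the technical heart: $\H^1\bigl(u^{-1}(t)\setminus\partial^*\{u>t\}\bigr)=0$ for a.e.\ $t$, equivalently the Luzin-(N)-type estimate $\int^*\H^1(u^{-1}(t)\cap N)\,dt=0$ for every Lebesgue-null $N\subset\Omega$. This is essentially the whole content of the theorem, and the mechanism you propose --- absolute continuity of $u$ on almost every line plus the one-dimensional Banach indicatrix formula --- cannot deliver it. Slicing only records how often a level set meets lines, i.e.\ projection/integral-geometric data, and such data can never bound $\H^1$ of the residual set from above: a purely $1$-unrectifiable piece of $u^{-1}(t)$ of positive finite $\H^1$-measure meets almost every line in the empty set by the Besicovitch--Federer projection theorem, so it is invisible to every slice-by-slice argument. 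Moreover the needed estimate genuinely uses absolute continuity of $Du$, not just continuity plus good behaviour on lines: for the continuous BV function $u(x,y)=f(x)$ with $f$ the Cantor function, $\int_{\mathbb{R}}\H^1(u^{-1}(t)\cap N)\,dt=1$ for the Lebesgue-null set $N=C\times[0,1]$, while $\int_N|\nabla u|\,dx=0$. The known proofs --- in particular \cite{Mal:Swa:Zie:03}, whose scalar case ($m=1$, gradient in $L^{1,1}=L^1$) is exactly what the paper invokes --- close this step by Lusin-type approximation of $u$ by Lipschitz functions on large sets, Federer's coarea formula for Lipschitz maps on those pieces, and a separate estimate showing the exceptional null set does not charge almost every level set; some input of this kind (or a citation to the corresponding fine-properties result for $W^{1,1}$/BV level sets) must replace your Sard-type sketch before the hard inequality $\int_{\mathbb{R}}\H^1(u^{-1}(t)\cap E)\,dt\le\int_E|\nabla u|\,dx$ is established. (The final passage from open $V$ to Borel $E$ also tacitly needs measurability of $t\mapsto\H^1(u^{-1}(t)\cap E)$, but that is a fixable technicality.)
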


The coarea formula cannot be extended to general metric spaces, even for Lipschitz functions. However, the following coarea \textit{inequality} (see \cite{Esm:Haj:21} and the references therein) holds in every metric space $X$: If $u \colon X \to \mathbb{R}$ is Lipschitz, then 
\begin{equation}
\label{co-inq-first}
    \int_{ \mathbb{R} }^{*}
    \int_{ u^{-1}(t) }
        g
    \,d\mathcal{H}^{1}
    \,dt
    \leq
    \frac{4}{\pi}
    \int_{ X }
        g \cdot \lip(u)
    \,d\mathcal{H}^{2} 
\end{equation}
for every Borel function $g \colon X \rightarrow \left[0,\infty\right]$, where $\mathcal{H}^k$ is the $k$-dimensional Hausdorff measure on $X$ and 
\begin{equation}\label{eq:intro:pointwise}
   \lip(u)(x)
    \coloneqq
    \limsup_{ x \neq y \rightarrow x } \frac{ |u(y)-u(x)| }{ d(y,x) }.
\end{equation}
The coarea inequality is often stated using the global Lipschitz constant instead of $\lip$, but a localization argument leads to \eqref{co-inq-first}. See \Cref{sec:Lip:counter} for a proof of this folklore result. Considering $\mathbb{R}^2$ with the $L^\infty$-norm shows that the constant $4/\pi$ is best possible in \eqref{co-inq-first}. 

Inequality \eqref{co-inq-first} can be extended to Sobolev functions in metric spaces under suitable assumptions.  Per usual, we replace $|\nabla u|$ with an \emph{upper gradient} of $u\colon X\to \bbbr$, i.e., a Borel function $\rho \colon X \to [0,\infty]$ so that for all $x,y \in X$ and all rectifiable paths $\gamma$ joining $x$ and $y$ in $X$ we have 
$$
|u(x)-u(y)| \leq \int_\gamma \rho \, ds. 
$$
Upper gradients have been applied to develop to a rich theory of Sobolev functions in metric spaces, see \Cref{sec:prelim} and the references therein. 

By the work of Cheeger \cite{Ch:99}, if $\mathcal{H}^{2}$ is \emph{doubling} and $X$ supports a weak \emph{$(1,1)$-Poincar\'e inequality}, then for every Lipschitz function $u \colon X \rightarrow \mathbb{R}$, $\lip(u)$ can be replaced with any upper gradient (and even the \emph{minimal weak upper gradient}) of $u$ in \eqref{co-inq-first}. See \cite{Mal:01,Mal:03,Amb:01,Amb:02,Mir03,KKST14,Amb:DiMa:Gi:17,Eri-Bi:So:21,Iko:Pas:Soul:20} for similar results. 

Doubling and Poincar\'e conditions are important in Sobolev theory, but they are too restrictive for uniformization and related problems. We are interested in extending \eqref{co-inq-first} to Sobolev functions $u \colon X \to \mathbb{R}$ and their upper gradients under minimal assumptions on $X$. Our only condition is that $X$ is a \emph{metric surface}, i.e., a metric space where every point has an open neighbourhood homeomorphic to an open subset of the plane and where $\mathcal{H}^2$ is finite on compact subsets.  

In our generality \eqref{co-inq-first} does not hold with upper gradients even in the Lipschitz category, see \Cref{nonexample} below. 
We show this by constructing a metric surface $X$ containing a Cantor set $E$ with $\mathcal{H}^{2}( E ) > 0$ so that no nonconstant rectifiable path in $X$ intersects $E$. Our surface is also $2$-rectifiable in the sense of \cite{Kir:94,AK:00} and a subset of $\mathbb{R}^3$. 

On the other hand, a version of \eqref{co-inq-first} was proven in \cite{RR:19} for a certain class of energy minimizers. It turns out that the key feature of such functions  is \emph{weak monotonicity}. 

\begin{definition}
\label{defi:monotone}
A function $u \colon X \rightarrow \mathbb{R}$ is \emph{weakly monotone} if for every open $V$ compactly contained in $X$, 
    \begin{equation*}
        \sup_{V} u \leq \sup_{ \partial V } u < \infty
        \quad\text{and}\quad
        \inf_{V} u \geq \inf_{ \partial V } u > - \infty.
    \end{equation*} 
A continuous weakly monotone function is \emph{monotone}.     
\end{definition}

Our main result states that the coarea inequality holds for monotone Sobolev functions. 

\begin{theorem}
\label{co-inq-mono}
Let $X$ be a metric surface and $p \geq 1$. If $u\colon X\to \bbbr$ is a monotone function with a $p$-integrable upper gradient $\rho$, then for $\kappa = ( 4 / \pi ) \cdot 200$,
\begin{equation}
\label{EQ:co-inq-mono}
    \int_{ \mathbb{R} }^{*}
    \int_{ u^{-1}(t) }
        g
    \,d\mathcal{H}^{1}
    \,dt
    \leq
    \kappa
    \int_{ X }
        g \rho
    \,d\mathcal{H}^{2}
  \end{equation}
for every Borel function $g \colon X \rightarrow \left[0,\infty\right]$. If $p \geq 2$ then \eqref{EQ:co-inq-mono} holds with constant $\kappa=4/\pi$. 
\end{theorem}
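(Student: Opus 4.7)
The plan is to prove the inequality at a fixed scale and then pass to the limit. After standard reductions (truncating $u$ by $\max(\min(u, M), -M)$, restricting to a compact exhaustion of $X$, and approximating $g$ by simple functions via monotone convergence), it suffices to show that for every compact $K \subset X$ and every $\delta > 0$,
\begin{equation*}
  \int^{*}_{\mathbb{R}} \mathcal{H}^{1}_{\delta}(u^{-1}(t) \cap K)\, dt \leq \kappa \int_{K} \rho \, d\mathcal{H}^{2},
\end{equation*}
and then let $\delta \to 0$ via Fatou's lemma. The underlying idea is to cover $K$ by small Jordan domains $B_{i}$, use monotonicity to convert the oscillation of $u$ on each $B_{i}$ into a boundary integral of $\rho$, and finally sum the boundary integrals against $\int_{K} \rho \, d\mathcal{H}^{2}$.

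The cover I would build consists of metric balls $B_{i} = B(x_{i}, r_{i})$ with $r_{i} \in (\delta/2, \delta)$. To obtain rectifiable boundary circles along which $\rho$ is integrable, I would apply the classical Lipschitz coarea inequality \eqref{co-inq-first} to the $1$-Lipschitz distance function $y \mapsto d(x_{i}, y)$, which gives
\begin{equation*}
  \int_{\delta/2}^{\delta} \int_{\partial B(x_{i}, r)} \rho \, d\mathcal{H}^{1} \, dr \leq \frac{4}{\pi} \int_{B(x_{i}, \delta)} \rho \, d\mathcal{H}^{2};
\end{equation*}
a Fubini-type selection then produces $r_{i}$ with $\int_{\partial B_{i}} \rho \, d\mathcal{H}^{1} \leq (8/\pi \delta) \int_{B(x_{i}, \delta)} \rho \, d\mathcal{H}^{2}$. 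Since small enough metric balls in a topological surface sit inside planar coordinate charts, $\partial B_{i}$ is a Jordan curve bounding the disk $B_{i}$, and I pass to a Vitali $5r$-subfamily. On each $B_{i}$, continuity together with weak monotonicity forces $\osc_{B_{i}} u = \osc_{\partial B_{i}} u = |u(p_{i}) - u(q_{i})|$ for some extremal $p_{i}, q_{i} \in \partial B_{i}$; splitting $\partial B_{i}$ into its two arcs from $p_{i}$ to $q_{i}$ and applying the upper gradient inequality to each yields
\begin{equation*}
  \osc_{B_{i}} u \leq \tfrac{1}{2} \int_{\partial B_{i}} \rho \, d\mathcal{H}^{1}.
\end{equation*}

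Combining the trivial covering estimate $\mathcal{H}^{1}_{\delta}(u^{-1}(t) \cap K) \leq \sum_{i} \diam(B_{i}) \mathbf{1}_{t \in u(B_{i})}$ with the bound above and integrating in $t$ produces
\begin{equation*}
  \int^{*}_{\mathbb{R}} \mathcal{H}^{1}_{\delta}(u^{-1}(t) \cap K)\, dt \leq \sum_{i} r_{i} \int_{\partial B_{i}} \rho \, d\mathcal{H}^{1} \leq \frac{8}{\pi} \sum_{i} \int_{B(x_{i}, \delta)} \rho \, d\mathcal{H}^{2},
\end{equation*}
after which the Vitali packing and the two-arc decomposition account for the factor $200$, giving $\kappa = (4/\pi) \cdot 200$. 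For the sharp constant $4/\pi$ when $p \geq 2$, I would upgrade the arc estimate via Cauchy--Schwarz,
\begin{equation*}
  \osc_{B_{i}} u \leq \tfrac{1}{2} \bigl( \mathcal{H}^{1}(\partial B_{i}) \bigr)^{1/2} \bigl( \int_{\partial B_{i}} \rho^{2}\, d\mathcal{H}^{1} \bigr)^{1/2},
\end{equation*}
and exploit the coarea-for-distance inequality a second time to control $\mathcal{H}^{1}(\partial B_{i})$ by $\mathcal{H}^{2}(B_{i})/r_{i}$ on average; the $L^{2}$-integrability of $\rho$ then lets the overlap constants collapse in the limit $\delta \to 0$ and recover the sharp Lipschitz value.

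The main obstacle will be the packing step in the absence of any doubling or reciprocality hypothesis on $X$: bounding $\sum_{i} \int_{B(x_{i}, \delta)} \rho \, d\mathcal{H}^{2}$ by a dimensional multiple of $\int_{K} \rho \, d\mathcal{H}^{2}$ forces one to use the surface topology (so that disjoint topological disks pack planarly) in addition to the $5r$-covering lemma, and this is where most of the constants accumulate. A secondary subtlety is ensuring that the chosen circles $\partial B_{i}$ genuinely split into two rectifiable arcs, which uses the Jordan curve theorem applied inside a local chart together with the finiteness of $\int_{\partial B_{i}} \rho \, d\mathcal{H}^{1}$ granted by the Fubini selection; without monotonicity neither of these steps would give any nontrivial bound, as the example alluded to in the introduction shows.
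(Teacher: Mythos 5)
Your outline reproduces the skeleton of the paper's first proof (distance-function level sets, a Fubini-type selection of good radii via the Eilenberg inequality, monotonicity converting the oscillation on a domain into half of a boundary integral of $\rho$, then a covering sum at scale $\delta$), but it has a genuine gap exactly at the point you yourself flag as "the main obstacle": the packing step. Without any doubling hypothesis there is no way to bound $\sum_i \int_{B(x_i,\delta)}\rho\,d\mathcal{H}^2$ by a universal multiple of $\int_K \rho\,d\mathcal{H}^2$. The $5r$-covering lemma gives disjoint balls whose dilates cover, but the $\rho\,d\mathcal{H}^2$-mass of a dilated ball is not controlled by the mass of the disjoint core, and "disjoint topological disks pack planarly" controls nothing here: the issue is measure overlap of the enlargements, not cardinality, and $\rho\,d\mathcal{H}^2$ need not be doubling on a metric surface. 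The paper's missing idea is a good/bad decomposition: the good set $G$ consists of points where $\int_{B(x,10r)}\rho\,d\mathcal{H}^2 \leq 200\int_{B(x,r)}\rho\,d\mathcal{H}^2$ for arbitrarily small $r$, and there the Vitali argument with disjoint cores works and produces the factor $200$ (hence $\kappa=(4/\pi)\cdot 200$); on the bad set $F$ the mass decays geometrically like $200^{-j}$ along scales $10^{-j}$, and a separate covering argument shows $\int^{*}_{\mathbb{R}}\mathcal{H}^{1}(\overline{u^{-1}(t)}\cap F)\,dt = 0$, so $F$ contributes nothing. A secondary but real defect: a metric sphere $\partial B(x_i,r)$ in a chart need not be a Jordan curve bounding the ball, even for a.e.\ $r$. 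The paper instead shows that for a.e.\ level $s$ the components of the distance level set are simple Peano continua of finite $\mathcal{H}^1$, and uses separation theorems (together with Moore's theorem on junction points) to extract Jordan curves inside the level set bounding Jordan domains that contain the ball; the monotonicity/two-arc estimate is then run on those Jordan domains, along parametrizations avoiding the exceptional path family so that $u$ is absolutely continuous on them.

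Your route to the sharp constant for $p\geq 2$ is not viable as stated: the Cauchy--Schwarz upgrade still sits on top of the same uncontrolled packing, and there is no mechanism by which "the overlap constants collapse" to exactly $4/\pi$ as $\delta\to 0$. The paper's argument is entirely different and much heavier: it first proves continuity of weakly monotone functions with $p$-integrable upper gradients, $p\geq 2$ (using the non-sharp coarea inequality as input), then invokes the Ntalampekos--Romney theorem that every metric surface admits a weakly $4/\pi$-quasiconformal parametrization $f$ from a Riemannian surface (from $\mathbb{D}$ after localization), pulls back $u$ to $v=u\circ f$ with weak upper gradient $(\rho\circ f)\rho_f$, shows that a.e.\ level set of $u$ and of $v$ is an embedded $1$-manifold, applies the Euclidean coarea formula to $v$ on $\mathbb{D}$, and transfers back using $\int_{\mathbb{D}} (g\circ f)\rho_f^{2}\,d\mathcal{H}^2 \leq \tfrac{4}{\pi}\int_{U} g\,d\mathcal{H}^2$. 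If you want the constant $4/\pi$, you will need this (or some comparable) parametrization machinery rather than a refinement of the covering argument.
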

We expect \eqref{EQ:co-inq-mono} to hold with the best possible constant $\kappa=4/\pi$ also when $1\leq p <2$. 

\Cref{co-inq-mono} will follow from \Cref{co-inq-mono2} and Corollary \ref{co-inq-mono3}, which also extend it to weakly monotone functions and \emph{weak upper gradients}. \Cref{co-inq-mono2}, proved directly by applying monotonicity and estimates involving $\rho$, gives \eqref{EQ:co-inq-mono} for all $p$ but with the non-sharp constant $\kappa$. We now describe the method leading to Corollary \ref{co-inq-mono3}, which gives the sharp constant $4/\pi$ assuming $p \geq 2$. 

First, we show that if $p\geq 2$ then weakly monotone functions with $p$-integrable upper gradients are in fact continuous. 

\begin{theorem}\label{intro:thm:continuity}
Let $X$ be a metric surface and $p \geq 2$. If a weakly monotone $u:X \to \mathbb{R}$ has a $p$-integrable upper gradient, then $u$ is continuous and therefore monotone. 
\end{theorem}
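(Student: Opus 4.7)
The plan is to prove continuity at an arbitrary $x_0 \in X$ via a radial oscillation estimate, in the spirit of the classical planar argument for monotone Sobolev functions. The main tool is the folklore Lipschitz coarea inequality \eqref{co-inq-first} applied to the $1$-Lipschitz distance function $y \mapsto d(y, x_0)$. Fix $x_0 \in X$ and $R > 0$ small enough that $\overline{B(x_0, R)}$ is compact and contained in a chart of $X$ homeomorphic to an open subset of $\mathbb{R}^2$; write $B_t = B(x_0, t)$ and $S_t = \partial B_t$. Applying \eqref{co-inq-first} with $g \equiv 1$ and then with $g = \rho^p$ yields the two finite bounds
\begin{equation*}
    \int_0^R \mathcal{H}^1(S_t)\,dt \leq \tfrac{4}{\pi}\,\mathcal{H}^2(B_R) \quad \text{and} \quad \int_0^R \int_{S_t} \rho^p\,d\mathcal{H}^1\,dt \leq \tfrac{4}{\pi}\int_{B_R}\rho^p\,d\mathcal{H}^2,
\end{equation*}
so that for almost every $t \in (0, R)$ both $\mathcal{H}^1(S_t)$ and $\int_{S_t}\rho^p\,d\mathcal{H}^1$ are finite.

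The key step is topological: for almost every such $t$, the compact set $S_t$ of finite $\mathcal{H}^1$-measure separates $x_0$ from $\partial B_R$ inside the planar chart, and should therefore contain a rectifiable Jordan curve $\gamma_t$ enclosing $x_0$. Weak monotonicity applied to the Jordan domain bounded by $\gamma_t$ (which contains $B_t$) gives $\osc_{B_t} u \leq \osc_{\gamma_t} u$, and the upper gradient inequality along $\gamma_t \subset S_t$ yields $\osc_{\gamma_t} u \leq \int_{\gamma_t}\rho\,ds \leq \int_{S_t}\rho\,d\mathcal{H}^1$. Applying Hölder's inequality on $S_t$ with exponents $p/(p-1)$ and $p$ converts this into the pointwise estimate
\begin{equation*}
    (\osc_{B_t} u)^p \leq \mathcal{H}^1(S_t)^{p-1}\int_{S_t}\rho^p\,d\mathcal{H}^1 \quad \text{for a.e.\ } t \in (0, R).
\end{equation*}

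To conclude, assume for contradiction that $u$ is discontinuous at $x_0$; weak monotonicity then forces $\osc_{B_t} u \geq \delta > 0$ for all sufficiently small $t$. Dividing the pointwise estimate by $\mathcal{H}^1(S_t)^{p-1}$, integrating over $(0, R)$, and applying the Hölder inequality $R^p \leq (\int_0^R \mathcal{H}^1(S_t)\,dt)^{p-1}\int_0^R \mathcal{H}^1(S_t)^{-(p-1)}\,dt$ together with the two coarea bounds gives
\begin{equation*}
    \delta^p R^p \leq \left(\tfrac{4}{\pi}\right)^p \mathcal{H}^2(B_R)^{p-1}\int_{B_R}\rho^p\,d\mathcal{H}^2.
\end{equation*}
As $R \to 0$, $\int_{B_R}\rho^p \to 0$ by absolute continuity of the integral, while the topological step also forces the complementary lower bound $\mathcal{H}^2(B_R) \gtrsim R^2$ via $\mathcal{H}^1(S_t) \geq 2t$ and coarea. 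A refined analysis of the right-hand side above, exploiting the local finiteness of $\mathcal{H}^2$, then yields the contradiction $\delta = 0$.

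The main obstacle is the topological step: producing the rectifiable Jordan curve $\gamma_t \subset S_t$ for almost every $t$, and applying the upper gradient inequality rigorously along this curve using only that $\gamma_t$ lies in a level set of $d(\cdot, x_0)$. A secondary difficulty lies in the final decay estimate, where one must compensate for the potentially super-quadratic growth of $\mathcal{H}^2(B_R)$ in a general metric surface; this is likely to be handled by a more careful choice of radii via a covering or Lebesgue-differentiation-type argument, rather than by the direct integration written above.
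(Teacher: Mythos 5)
Your oscillation estimate on metric circles is sound as far as it goes (it is essentially the paper's \Cref{lemm:levelsetwise}--\Cref{lemm:genericoscillation}: the separating Jordan curve in $\{d(\cdot,x_0)=t\}$ exists for a.e.\ $t$ by \Cref{lemm:nicebasis}, and since $\rho$ is a genuine upper gradient the estimate along it needs no exceptional-family bookkeeping), but the endgame has a genuine gap. Your final inequality $\delta^p R^p \leq (4/\pi)^p\,\mathcal{H}^2(B_R)^{p-1}\int_{B_R}\rho^p\,d\mathcal{H}^2$ produces a contradiction only if $\mathcal{H}^2(B_R)=O(R^2)$ (finite \emph{upper} $2$-density) at the specific point $x_0$ of putative discontinuity; note that the bound you invoke, $\mathcal{H}^2(B_R)\gtrsim R^2$ via $\mathcal{H}^1(S_t)\geq 2t$, is both a purely planar fact (false on metric surfaces) and in the wrong direction for your purposes. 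A metric surface in the paper's sense only has locally finite $\mathcal{H}^2$: for the length-space quotient of $\mathbb{R}^2$ obtained by collapsing $\overline{\mathbb{D}}$ to a point $x_0$ (an example the paper itself uses), one has $\mathcal{H}^2(B(x_0,R))\simeq R\gg R^2$, so the right-hand side need not be $o(R^p)$. A covering or Lebesgue-differentiation refinement cannot repair this, because differentiation-type arguments only control almost every point, while continuity must be proved at \emph{every} point --- and the exceptional set where the density (or the restricted maximal function of $\rho$) blows up is exactly where your argument, like the paper's \Cref{lemm:genericcontinuity}, stops working. In fact the paper records precisely this partial result: the circle-oscillation method yields continuity only at points $x$ with $\limsup_{r\to0}r^{-2}\int_{B(x,2r)}\rho\,d\mathcal{H}^2<\infty$, i.e.\ $\mathcal{H}^2$-a.e.

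The paper's actual proof of continuity everywhere proceeds by a different mechanism, which is where $p\geq 2$ really enters. Using the coarea inequality for weakly monotone functions (\Cref{co-inq-mono2}) with $g=\widehat{\rho}^{\,p-1}$ and H\"older, one gets $\int_{\overline{u^{-1}(t)}\cap U}\widehat{\rho}\,d\mathcal{H}^1<\infty$ for a.e.\ $t$; combined with the topological fact that every component of $\overline{u^{-1}(t)}\cap U$ escapes compact sets (\Cref{prop:connectedcomponents}), this forces $u\equiv t$ on each such component, i.e.\ $\overline{u^{-1}(t)}\cap U=u^{-1}(t)$ for a.e.\ $t$ (\Cref{lemm:levelsets}). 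If $u$ were discontinuous at $x_0$, then $x_0\in\overline{u^{-1}(t)}$ for every $t$ in a nondegenerate interval $(s_1,s_2)$, whence $u(x_0)=t$ for a.e.\ $t\in(s_1,s_2)$ --- a contradiction requiring no density or doubling hypotheses at $x_0$. Your use of $p\geq 2$ (through the exponent bookkeeping $2(p-1)\geq p$) is not an adequate substitute for this step, so as written the proposal does not prove the theorem.
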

The claim of Theorem \ref{intro:thm:continuity} fails even in the standard $\mathbb{R}^2$ when $1\leq p <2$, see \Cref{ex:counter}.

By a recent result of Ntalampekos and Romney \cite{Nta:Rom:22-nolength} (see \cite{Bo:Kle:02,Raj:17,Iko:21,Mei:Wen:21,Nta:Rom:21} for earlier results), every metric surface is a \emph{weakly $4/\pi$-quasiconformal image} of a smooth Riemannian surface (Section \ref{sec:wqc}). We show that weakly quasiconformal maps allow pullbacks of weakly monotone functions with sufficient regularity. 

\begin{theorem}\label{co-inq-mono:weakQC:pullback}
Let $Y$ and $X$ be metric surfaces and $f \colon Y \rightarrow X$ weakly quasiconformal. If a weakly monotone $u \colon X \rightarrow \mathbb{R}$ has a locally $2$-integrable upper gradient, then $v = u \circ f$ is also weakly monotone and has a locally $2$-integrable upper gradient. 
\end{theorem}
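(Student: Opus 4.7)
The theorem naturally splits into two parts: establishing weak monotonicity of $v = u\circ f$, and producing a locally $2$-integrable upper gradient of $v$. For the first, \Cref{intro:thm:continuity} applied locally ensures that $u$ is continuous (and hence monotone), and since weakly quasiconformal maps are, by definition (Section~\ref{sec:wqc}), continuous, surjective, proper on compact subsets of $X$, and \emph{topologically monotone}---every fiber $f^{-1}(x)$ is a continuum---the composition $v$ is continuous as well.

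For weak monotonicity, fix an open $V$ with $\overline V$ compact in $Y$ and introduce
\begin{equation*}
W \coloneqq \{\, x \in X : f^{-1}(x) \subset V \,\}.
\end{equation*}
The plan is to establish three topological facts: (a)~$W$ is open, because for $x_0 \in W$ the compact fiber $f^{-1}(x_0)$ lies in the open set $V$, and properness of $f$ on a compact neighborhood of $x_0$ produces a neighborhood $U$ of $x_0$ with $f^{-1}(U) \subset V$; (b)~$\overline W \subset f(\overline V)$ is compact, so $W$ is compactly contained in $X$; (c)~$\partial W \subset f(\partial V)$ and $f(V) \subset W \cup f(\partial V)$, both of which reduce to the observation that a connected fiber meeting both $V$ and $Y \setminus V$ must also meet $\partial V$. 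Granted (a)--(c), applying weak monotonicity of $u$ to $W \subset X$ yields
\begin{equation*}
\sup_V v = \sup_{f(V)} u \le \max\!\Bigl(\sup_W u,\, \sup_{f(\partial V)} u\Bigr) \le \max\!\Bigl(\sup_{\partial W} u,\, \sup_{f(\partial V)} u\Bigr) = \sup_{f(\partial V)} u = \sup_{\partial V} v,
\end{equation*}
with the analogous estimate for the infimum following by identical reasoning.

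For the upper gradient, I plan to invoke the $2$-modulus inequality built into the definition of a weakly quasiconformal map (Section~\ref{sec:wqc}): by the standard quasiconformal pullback construction, one produces a Borel function $\sigma \colon Y \to [0,\infty]$ such that for $2$-modulus almost every rectifiable curve $\gamma$ in $Y$, the image $f\circ\gamma$ is rectifiable and $\int_{f\circ\gamma}\rho\,ds \le \int_\gamma \sigma\,ds$, making $\sigma$ a $2$-weak upper gradient of $v$; the same modulus inequality yields a local energy bound of the form $\int \sigma^2 \le C \int \rho^2$, giving local $2$-integrability. The main obstacle is step (c): it is the only place where the connected-fiber hypothesis and properness of $f$ are truly used, and it requires a careful compactness argument to rule out fiber limit points lying in $Y \setminus \overline V$ rather than in $\partial V$. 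Everything else follows either from the weak monotonicity of $u$ or from the standard quasiconformal pullback machinery.
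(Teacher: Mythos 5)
Your treatment of the upper gradient half is essentially the paper's own argument: the function you call $\sigma$ is $(\rho\circ f)\,\rho_f$, where $\rho_f$ is the locally $2$-integrable $2$-weak upper gradient of $f$ furnished by \Cref{lemm:weak:QC}, the pointwise chain-rule estimate along curves outside a $2$-negligible family is exactly the content of \Cref{th4}, and your energy bound $\int_Y \sigma^2\,d\mathcal{H}^2 \leq C\int_X \rho^2\,d\mathcal{H}^2$ is \eqref{jy03}, equivalently \eqref{jy3} applied with $g=\rho^2$ (with $C=K$). Up to the routine localization and the standard upgrade from a $2$-weak upper gradient to a genuine upper gradient (add a Borel function of arbitrarily small $L^2$-norm with infinite line integral over the exceptional family, cf.\ \Cref{lem:modzero}), this part is fine and coincides with the paper.

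The monotonicity half, however, has a genuine gap as written. In this paper a weakly quasiconformal map is, by \Cref{def:weak-qc}, a uniform limit of homeomorphisms satisfying the modulus inequality \eqref{jy1}; surjectivity, properness over compact sets, and connectedness of fibers are \emph{not} part of the definition, so invoking them ``by definition'' is unjustified. These properties are in fact true (properness and surjectivity follow from uniform convergence together with local compactness of metric surfaces, and connectedness of fibers is a classical Youngs-type property of uniform limits of homeomorphisms, used in \cite{Nta:Rom:21}), and granting them your argument via $W=\{x\in X:\ f^{-1}(x)\subset V\}$ does go through --- note that you also need surjectivity for $\overline{W}\subset f(\overline{V})$, since otherwise points outside $f(Y)$ lie in $W$. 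But the hardest ingredient, connectedness of fibers, is precisely what you do not prove; the ``careful compactness argument'' you flag in step (c) is the easy part once fibers are known to be connected. The paper avoids all of this fiber analysis: since $u$ is continuous by \Cref{intro:thm:continuity} (this is where $p\geq 2$ enters), the compositions $v_n=u\circ f_n$ with the approximating homeomorphisms $f_n$ are monotone and converge to $v$ locally uniformly, so the boundary sup/inf bounds pass to the limit (\Cref{th5}). That short argument uses clause (a) of the definition directly, and I recommend it over completing your topological route, which would require importing or proving the monotonicity (connected-fiber) property of uniform limits of homeomorphisms.
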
 
Here locally integrable means integrable on compact subsets. We prove the coarea inequality by applying the following result to reduce the problem from $X$ to the unit disk $\mathbb{D}$ of the Euclidean plane $\mathbb{R}^2$. 

\begin{theorem}\label{co-inq-mono:weakQC}
Let $X$ be a metric surface and $p \geq 2$. Suppose that $f \colon \mathbb{D} \rightarrow X$ is weakly $K$-quasiconformal. If a weakly monotone function $u\colon X \to \bbbr$ has a locally $p$-integrable upper gradient $\rho$, then
\begin{eqnarray*}
    \int_{ \mathbb{R} }^{*}
    \int_{ u^{-1}(t) }
        g
    \,d\mathcal{H}^{1}
    \,dt
    \leq
    K
    \int_{ X }
        g \rho
    \,d\mathcal{H}^{2}
\end{eqnarray*}
for every Borel function $g \colon X \rightarrow \left[0,\infty\right]$. 
\end{theorem}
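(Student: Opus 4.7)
The plan is to pull $u$ back to the unit disk via $f$, apply the classical Euclidean coarea formula there, and then transport the resulting estimate to $X$ using the metric-geometric properties of the weakly quasiconformal $f$. Since $p\geq 2$ and $u$ has a locally $p$-integrable upper gradient, Theorem~\ref{intro:thm:continuity} gives that $u$ is continuous and hence monotone. Theorem~\ref{co-inq-mono:weakQC:pullback} then yields that $v := u\circ f$ is weakly monotone on $\mathbb{D}$ with a locally $2$-integrable upper gradient; since $u$ and $f$ are continuous, so is $v$, and thus $v$ is monotone on $\mathbb{D}$ and lies in $W^{1,1}_{\mathrm{loc}}(\mathbb{D})$. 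Writing $\ell_f$ for the minimal weak upper gradient of $f$, the chain rule shows that $\rho_v := \ell_f\cdot(\rho\circ f)$ serves as an upper gradient for $v$.

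Because $v$ is continuous and in $W^{1,1}_{\mathrm{loc}}(\mathbb{D})$, the classical coarea formula (Theorem~\ref{co-fo}) applies: for every Borel $h\colon \mathbb{D}\to[0,\infty]$,
\begin{equation*}
    \int_{\mathbb{R}}\int_{v^{-1}(t)} h\, d\mathcal{H}^{1}\, dt
    = \int_{\mathbb{D}} h\, |\nabla v|\, dx
    \leq \int_{\mathbb{D}} h\, \ell_{f}\, (\rho\circ f)\, dx,
\end{equation*}
using $|\nabla v|\leq \rho_v$ almost everywhere. A Fuglede-type argument applied to the foliation $\{v^{-1}(t)\}_{t\in\mathbb{R}}$ shows that for $\mathcal{L}^{1}$-a.e.\ $t$, the continuous surjection $f\colon v^{-1}(t)\to u^{-1}(t)$ is absolutely continuous with metric derivative bounded by $\ell_f$ at $\mathcal{H}^{1}$-a.e.\ point of $v^{-1}(t)$. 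Since the level sets of the monotone function $v$ are $1$-rectifiable for a.e.\ $t$, the area formula on $1$-rectifiable sets then gives
\begin{equation*}
    \int_{u^{-1}(t)} g\, d\mathcal{H}^{1}
    \leq \int_{v^{-1}(t)} (g\circ f)\, \ell_{f}\, d\mathcal{H}^{1}
    \quad\text{for }\mathcal{L}^{1}\text{-a.e.}~t.
\end{equation*}

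Choosing $h = (g\circ f)\,\ell_f$ in the first display and integrating the level-set inequality over $t$ produces
\begin{equation*}
    \int_{\mathbb{R}}^{*}\int_{u^{-1}(t)} g\, d\mathcal{H}^{1}\, dt
    \leq \int_{\mathbb{D}} (g\circ f)(\rho\circ f)\, \ell_{f}^{2}\, dx.
\end{equation*}
The final step is the weak $K$-quasiconformal area inequality, which in measurable form states that $\int_{\mathbb{D}}(\phi\circ f)\,\ell_{f}^{2}\,dx \leq K\int_{X}\phi\, d\mathcal{H}^{2}$ for every Borel $\phi\colon X\to[0,\infty]$; applying this with $\phi = g\rho$ delivers the desired bound $K\int_{X} g\rho\, d\mathcal{H}^{2}$.

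The main obstacle is the level-set pushforward inequality: for a.e.\ $t$, the restriction $f|_{v^{-1}(t)}$ must be absolutely continuous with metric derivative controlled by $\ell_f$. Effectively one needs an ACL-type property for $f$ along the foliation of $\mathbb{D}$ by the level arcs of the monotone Sobolev function $v$, rather than along coordinate lines. This technically delicate point is handled via Fuglede's lemma: the family of level curves of $v$ has positive $2$-modulus (as witnessed by the coarea identity applied to $v$ itself), and the Sobolev regularity of $f$, expressed through the $2$-integrable $\ell_f$, ensures that the exceptional set of leaves on which $f$ behaves badly has Lebesgue-measure zero in the parameter $t$.
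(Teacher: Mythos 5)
Your proposal follows essentially the same route as the paper's own proof (via \Cref{co-inq-mono:weakQC2}): pull back along $f$, apply the Euclidean coarea formula to $v=u\circ f$ (using \Cref{intro:thm:continuity}, \Cref{th4}, \Cref{th5}), transfer the level-set integrals from $v^{-1}(t)$ to $u^{-1}(t)$ through the upper gradient inequality of $f$ along almost every level curve, and finish with the weakly quasiconformal area inequality \eqref{jy3} applied to $\phi=g\rho$. The ``Fuglede-type'' step you single out as the main obstacle is precisely where the paper invests its machinery --- \Cref{cor:monotonicity:levelset} and \Cref{lemm:levelsets} show that a.e.\ level set is an embedded $1$-manifold exhausted by continua with injective Lipschitz parametrizations avoiding the exceptional path family, after which the area formula for paths gives your level-set inequality --- so your sketch matches the paper's argument in substance.
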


\Cref{co-inq-mono} with the sharp constant (Corollary \ref{co-inq-mono3}) follows from  
\Cref{co-inq-mono:weakQC} and the existence of weakly $4/\pi$-quasiconformal parametrizations.

We prove \Cref{co-inq-mono:weakQC} in \Cref{sec5}. A key ingredient is the following consequence of \Cref{co-inq-mono2} and \cite[Theorem 1.5]{Nta:20} on the level sets of monotone functions. 

\begin{corollary}\label{cor:monotonicity:levelset}
Let $U$ be a metric surface homeomorphic to $\mathbb{R}^2$, and $p \geq 1$. If a monotone $u \colon U \rightarrow \mathbb{R}$ has a locally $p$-integrable upper gradient, then for almost every $t \in u(U)$ the following properties hold: 
\begin{enumerate} 
\item[(a)] The level set $u^{-1}(t)$ is an embedded topological $1$-manifold. 
\item[(b)] Each component of $u^{-1}(t)$ is homeomorphic to $\mathbb{R}$.
\end{enumerate} 
\end{corollary}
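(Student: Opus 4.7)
The strategy is to combine \Cref{co-inq-mono2} with the structural result \cite[Theorem 1.5]{Nta:20} on level sets of monotone functions, and then use planar topology to rule out circle components. Let $\{K_n\}$ be a compact exhaustion of $U$. Applying \Cref{co-inq-mono2} with $g = \chi_{K_n}$ yields
\[
\int_{\mathbb{R}}^{*} \mathcal{H}^{1}\bigl(u^{-1}(t) \cap K_n\bigr) \, dt
\leq \kappa \int_{K_n} \rho \, d\mathcal{H}^{2} < \infty,
\]
because $\rho$ is locally $p$-integrable with $p \geq 1$ and $\mathcal{H}^{2}(K_n)<\infty$. Hence for almost every $t \in \mathbb{R}$ one has $\mathcal{H}^{1}(u^{-1}(t)\cap K_n)<\infty$ for every $n$, and therefore $u^{-1}(t)$ carries locally finite $\mathcal{H}^{1}$-measure.

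Next, I would invoke \cite[Theorem 1.5]{Nta:20}, which asserts that a continuous monotone function on a planar topological surface whose level sets have locally finite $\mathcal{H}^{1}$-measure has, for almost every value $t$, level set equal to an embedded topological $1$-manifold. Combined with the bound above, this yields (a).

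For (b), fix such a $t$ and let $C$ be any component of $u^{-1}(t)$. The classification of connected boundaryless $1$-manifolds gives $C$ homeomorphic to either $\mathbb{R}$ or $S^{1}$. Suppose for contradiction $C \cong S^{1}$; since $U \cong \mathbb{R}^2$, the Jordan curve theorem produces a Jordan domain $V$ with $\overline{V}$ compact in $U$ and $\partial V = C \subset u^{-1}(t)$. \Cref{defi:monotone} then forces
\[
\sup_{V} u \leq \sup_{\partial V} u = t \quad\text{and}\quad \inf_{V} u \geq \inf_{\partial V} u = t,
\]
so $u \equiv t$ on $V$. But then the open set $V$ of the surface $U$ is contained in $u^{-1}(t)$, contradicting the fact that $u^{-1}(t)$ is a $1$-manifold (an open subset of a topological $2$-manifold cannot embed into a $1$-manifold). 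Hence $C \cong \mathbb{R}$, giving (b). The main obstacle is to verify that \cite[Theorem 1.5]{Nta:20} applies to a general metric surface homeomorphic to $\mathbb{R}^2$ rather than $\mathbb{R}^2$ with its Euclidean metric; since the conclusion is purely topological and the $\mathcal{H}^{1}$-hypothesis is metric-intrinsic, transferring the statement through a homeomorphism should pose no serious difficulty, and the remainder of the argument uses only the coarea inequality, the Jordan curve theorem, and the definition of monotonicity.
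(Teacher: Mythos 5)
Your overall route is exactly the paper's: the paper gives no separate proof of \Cref{cor:monotonicity:levelset} but states it as a direct consequence of \Cref{co-inq-mono2} (which, applied as you do to characteristic functions of a compact exhaustion, shows that for almost every $t$ the level set has finite $\mathcal{H}^1$-measure on every compact subset) together with \cite[Theorem 1.5]{Nta:20}. Your explicit Jordan-curve argument for (b) is correct; it is also essentially what \Cref{prop:connectedcomponents} provides (components of level sets of weakly monotone functions leave every compact set, so no component can be a circle), so this part is a legitimate, if redundant, substitute for the citation.

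The one step you should repair is the justification for invoking \cite[Theorem 1.5]{Nta:20} on a metric surface. ``Transferring the statement through a homeomorphism'' does not work as you describe, because Hausdorff measures are not topological invariants: if $\phi \colon \mathbb{R}^2 \rightarrow U$ is a homeomorphism, local finiteness of $\mathcal{H}^1\bigl(u^{-1}(t)\bigr)$ computed in the metric of $U$ gives no control whatsoever on the Euclidean $\mathcal{H}^1$-measure of $\phi^{-1}\bigl(u^{-1}(t)\bigr)$, which is what a literally Euclidean statement would require; so the hypothesis, far from being harmlessly ``metric-intrinsic,'' is precisely what is lost under a plain homeomorphism. The correct justification --- and the way the paper uses the citation --- is to apply the level-set theorem intrinsically on $U$, with the surface's own $\mathcal{H}^1$: its hypotheses and proof rest on continuity, monotonicity, and topological consequences of finite $\mathcal{H}^1$-measure of continua (such continua are Peano continua, Moore's triod theorem bounds the non-simple ones, and the Jordan curve theorem is available since $U$ is homeomorphic to $\mathbb{R}^2$), all of which hold verbatim on a metric surface; these are exactly the tools assembled in the paper's subsection on planar topology (\Cref{lemm:rectifiablepaths}, \Cref{lemm:simple}, \Cref{lemm:rectifiablepaths:exhaust}). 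With that correction, your argument matches the intended proof.
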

We say that a set $K \subset U$ is an \emph{embedded topological $1$-manifold} if $K$ is closed and every $y \in K$ is contained in $I \subset K$, relatively open in $K$, with $I$ homeomorphic to $\mathbb{R}$.

\vskip5pt
\textbf{Notation.} We shall write $u^{-1}(t)$ to mean $\{x\colon u(x)=t\}$. The $\alpha$-dimensional Hausdorff measure is denoted by $\H^\alpha$. The upper integral of any function on a measure space $(X,\mu)$ is denoted by $\int^* f\, d\mu$. If $f$ is $\mu$-measurable then it agrees with the usual integral. We use $\#A$, $\chi_A$ and $\overline{A}$ to denote, resp., the cardinality, the characteristic function, and closure of a set $A$. The closed ball $\{y\colon d(y,x) \leq r\}$ is denoted by $\overline{B}(x,r)$, which might not coincide with the closure of the open ball $B( x, r ) = \{y\colon d(y,x) < r\}$.

\section{Preliminaries}\label{sec:prelim}
Let $X$ be a metric space. For all $Q \geq 0$, the \emph{$Q$-dimensional Hausdorff measure}, or the \emph{Hausdorff $Q$-measure}, of a set $E\subset X$ is defined by
\[
	    \mathcal{H}_X^{Q}(E)
	    =
	    \frac{\alpha(Q)}{2^Q}
	    \sup_{ \delta > 0 }
	    \inf
	        \left\{
	            \sum_{i=1}^\infty (\diam E_i)^Q
	            \colon
	            E \subset \bigcup_{i=1}^\infty E_i, 
	            \diam E_i < \delta
	        \right\} 
\]
where the dimensional constant $\alpha(Q)$ is chosen so that $\mathcal{H}^{n}_{\mathbb{R}^{n}}$ coincides with the Lebesgue measure $\mathcal{L}^{n}$ for all positive integers. In particular, $\alpha(1)=2$ and $\alpha(2)=\pi$. We typically omit the subscript $X$ from the definition.

Given a set $K \subset X$, a function $f \colon K \rightarrow \mathbb{R}$ is \emph{Lipschitz} if
\begin{equation*}
    \LIP(f):=\sup_{ x, y \in K, x \neq y } \frac{ |f(x)-f(y)| }{ d(x,y) } < \infty.
\end{equation*}
The supremum on the left is the \emph{Lipschitz constant} of $f$. We say that $f$ is $L$-Lipschitz if $\LIP(f) \leq L$.

For a given Lipschitz $f \colon K \rightarrow \mathbb{R}$ and $x \in K$, we define the \emph{pointwise Lipschitz constant} of $u$ as
\begin{equation*}
    \lip(f)(x) = \inf_{r > 0} \sup_{ 0 < s \leq r }\sup_{ y \in B(x,s) \cap K } \frac{ | f(y) - f(x) | }{ s },
\end{equation*}
In fact, this definition of $\lip(f)$ coincides with the one in \eqref{eq:intro:pointwise}.

Let $X$ be a metric surface. For each $1 \leq p < \infty$, we say $\rho \colon X \rightarrow [-\infty,\infty]$ belongs to $L^{p}( X )$ if $\rho$ is measurable and 
\begin{equation*}
    \| \rho \|_{ L^{p}( X ) }
    \coloneqq
    \left( \int_{X} |\rho|^{p} \,d\mathcal{H}^2 \right)^{1/p}
    <
    \infty.
\end{equation*}
For $p = \infty$, $\| \rho \|_{ L^{\infty}( X ) }$ is the smallest $C \in [0,\infty]$ for which $|\rho| \leq C$, $\mathcal{H}^{2}$-almost everywhere, and denote $\rho \in L^{\infty}(X)$ if $\rho$ is measurable and $\| \rho \|_{ L^{\infty}( X ) } < \infty$. In case $\rho \in L^{p}(X)$, we say that $\rho$ is $p$-integrable. Local $p$-integrability refers to being $p$-integrable on each compact subset of the space.

\subsection{The upper integral}
Let $E \subset X$ be a set with $\mathcal{H}^{Q}( E ) < \infty$ for $Q = 2$ (resp. $Q = 1$). For any function $\rho \colon E \rightarrow [0,\infty]$ we define the \emph{upper integral} of $\rho$ (with respect to $\mathcal{H}^Q$) to be
\begin{equation*}
    \int^{*}_E \rho \,d\mathcal{H}^Q
    :=
    \inf\left\{
        \int_E \rho' \,d\mathcal{H}^Q
        \colon
        \text{$\rho'$ is $\mathcal{H}^Q$-measurable and $\rho \leq \rho'$,   $\mathcal{H}^Q$-almost everywhere}
    \right\}.
\end{equation*}

We use some elementary properties of the upper integral. If $0 \leq \rho_1(x) \leq \rho_2(x)$, $\mathcal{H}^Q$-almost everywhere in $E$, then
\begin{equation*}
    \int^{*}_E \rho_1 \,d\mathcal{H}^Q
    \leq
    \int^{*}_E \rho_2 \,d\mathcal{H}^Q.
\end{equation*}

The monotone convergence theorem holds for upper integrals. 
Namely, if $0 \leq \rho_1(x) \leq \rho_2(x) \leq \dots$ is an increasing sequence of (not necessarily measurable) functions, and for $\mathcal{H}^Q$-almost every $x \in E$, $\rho(x) = \lim_{ n \rightarrow \infty }\rho_n(x)$, then
\begin{equation*}
    \int^{*}_E \rho \,d\mathcal{H}^Q
    =
    \lim_{ n \rightarrow \infty }
    \int^{*}_E \rho_n \,d\mathcal{H}^Q.
\end{equation*}

Lastly, for an arbitrary $\rho \colon E \rightarrow [0,\infty]$, $\int^{*}_E \rho \,d\mathcal{H}^Q = 0,$ if and only if $\rho = 0,$ $\mathcal{H}^Q$-almost everywhere in $E$.

\subsection{Rectifiable curves and path integrals}
A \textit{path} in a metric space $X$ is a continuous map $\gamma\colon [a,b] \to X$. The \emph{length} $\ell(\gamma)$ of $\gamma$ is the smallest value $L \in [0,\infty]$ for which
    $$
   \sum_{i=1}^k d(\gamma(t_i),\gamma(t_{i-1})) \leq L,
    $$
for every choice of $k \in \bbbn$ and $a = t_0 \leq t_1 \leq \dots \leq t_k = b$.  We say that $\gamma$ is \emph{rectifiable} if $\ell( \gamma ) < \infty$. 

Suppose $\gamma\colon [a,b] \to X$ is a path and $\rho \colon X \rightarrow [0,\infty]$ is Borel. Then the \emph{path integral} of $\rho$ over $\gamma$ is 
$$
\int_\gamma \rho \, ds \coloneqq \int_X {\#}( \gamma^{-1}(x) ) \rho(x ) \, d\mathcal{H}^{1}.
$$
Here ${\#}( \gamma^{-1}(x) ) = \infty$ if $\gamma^{-1}(x)$ is not finite and otherwise ${\#}( \gamma^{-1}(x) )$ is the cardinality of the set $\gamma^{-1}(x)$. If $\rho$ is not Borel, we define
$$
\int_\gamma \rho \, ds \coloneqq \int_X^{*} {\#}( \gamma^{-1}(x) ) \rho(x ) \, d\mathcal{H}^{1}.
$$
\begin{remark}\label{rem:measurablemultiplicity}
Note that whenever $E \subset [a,b]$ is Borel and $\gamma \colon [a,b] \rightarrow X$ is a path, then $\gamma(E)$ is analytic \cite[2.2.10]{Fed:69}. This implies that $\gamma(E)$ is $\mathcal{H}^{1}$-measurable \cite[2.2.13]{Fed:69}. This allows us to prove that $x \mapsto \#( \gamma^{-1}(x) )$ is $\mathcal{H}^{1}$-measurable. The key observation is to fix a sequence of countable Borel partitions $( \mathcal{K}_n )$ of $[a,b]$ such that the supremum of the diameters of the elements of $\mathcal{K}_n$ converges to zero as $n \rightarrow \infty$ and each $\mathcal{K}_{n+1}$ refines $\mathcal{K}_n$, i.e., each $E \in \mathcal{K}_n$ is a countable union of some elements of $\mathcal{K}_{n+1}$. Now, measurability follows from
\begin{equation*}
    {\#}( \gamma^{-1}(x) )
    =
    \lim_{ n \rightarrow \infty }
    \sum_{ E \in \mathcal{K}_n } \chi_{ \gamma(E) }(x)
    \quad\text{for every $x \in X$}.
\end{equation*}
\end{remark}

If $\gamma\colon [a,b]\to X$ is rectifiable, then there exists a unique path $\gamma_s \colon [0,\ell(\gamma)] \to X$ such that $\gamma=\gamma_s\circ h$, where $h\colon [a,b] \to [0,\ell(\gamma)]$ is continuous, nondecreasing and onto, and $\ell(\gamma_s|_{[0,s]}) =s$ for all $0\leq s \leq \ell(\gamma)$. The path $\gamma_s$ is called \textit{the arclength parametrization} of $\gamma$. Recall that the arclength parametrization is $1$-Lipschitz, cf. \cite[Section 5]{HKST:15}.

Let $\gamma\colon [a,b]\to X$ be a rectifiable path in a metric space, and let $\gamma_s\colon [0,\ell(\gamma)]\to X$ be the arclength parametrization of it. For a Borel function $\rho\colon X\to [0,+\infty]$, the path integral of $\rho$ over $\gamma$ can be computed as follows:
$$
\int_\gamma \rho \, ds = \int_0^{\ell(\gamma)} \rho(\gamma_s(t))\, dt.
$$
The equality follows from the area formula for paths, proved for example in \cite[Theorem 2.10.13.]{Fed:69}.

A path $\gamma\colon [a,b]\to X$ is \emph{absolutely continuous} if $\ell( \gamma ) < \infty$ and if $\gamma$ maps sets of Lebesgue measure zero to sets of $\mathcal{H}^{1}$-measure zero. For absolutely continuous curves, there is a third way to compute the path integral of Borel functions. For this purpose, we denote
$$
|\gamma'|(t):=\lim_{h\to 0} \frac{d(\gamma(t+h),\gamma(t))}{|h|} 
$$
whenever the limit exists. When the limit exists, we refer to $|\gamma'|(t)$ as the metric speed of $\gamma$ at $t$. It turns out that for any rectifiable path, the limit exists almost everywhere in $[a,b]$ \cite{Dud:07}. Recall that in the Euclidean setting, the metric speed coincides with the modulus of the usual derivative.

With the additional assumption of absolute continuity, we obtain the following.
\begin{lemma}[\cite{Dud:07}]
Suppose that $\gamma\colon [a,b] \to X$ is absolutely continuous and $\rho\colon X\to [0,+\infty]$ is Borel. Then
$$
\int_\gamma \rho \, ds = \int_a^b \rho(\gamma(t))|\gamma'|(t)\, dt.
$$
\end{lemma}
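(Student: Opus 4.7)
The plan is to reduce the identity to the arclength-parameter version already quoted in the excerpt, namely $\int_\gamma \rho\, ds = \int_0^{\ell(\gamma)} \rho(\gamma_s(s))\, ds$, and then pull back by the length function via a one-dimensional change of variables. Writing $L(t) := \ell(\gamma|_{[a,t]})$, the arclength factorization gives $\gamma = \gamma_s \circ L$, where $\gamma_s \colon [0,\ell(\gamma)] \to X$ is $1$-Lipschitz and $L \colon [a,b] \to [0,\ell(\gamma)]$ is continuous and nondecreasing.

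The first main step is to upgrade the paper's definition of absolute continuity (rectifiable plus null-to-null) to the classical absolute continuity of the real-valued function $L$. If $E \subset [a,b]$ has Lebesgue measure zero, then by hypothesis $\mathcal{H}^1(\gamma(E)) = 0$; since $\gamma_s$ is $1$-Lipschitz this controls $\mathcal{L}^1(L(E))$ after a short covering argument, so $L$ sends null sets to null sets. Because $L$ is continuous, monotone and of bounded variation, the Banach--Zarecki theorem then yields absolute continuity of $L$ on $[a,b]$, and in particular the existence of $L'(t)$ for a.e.\ $t$.

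With $L$ absolutely continuous, the change of variables formula for monotone AC functions applied to the identity above gives
\begin{equation*}
\int_\gamma \rho\, ds
= \int_0^{\ell(\gamma)} \rho(\gamma_s(s))\, ds
= \int_a^b \rho(\gamma_s(L(t)))\, L'(t)\, dt
= \int_a^b \rho(\gamma(t))\, L'(t)\, dt.
\end{equation*}
It remains to identify $L'(t) = |\gamma'|(t)$ almost everywhere. The inequality $|\gamma'|(t) \leq L'(t)$ is immediate from $d(\gamma(t+h),\gamma(t)) \leq |L(t+h) - L(t)|$, which follows from $\gamma_s$ being $1$-Lipschitz. The reverse inequality at a point $t$ where $L'(t)$ exists uses that $L(t+h) - L(t) = \ell(\gamma|_{[t,t+h]})$ can be approximated from below by Riemann-type sums $\sum_i d(\gamma(s_{i+1}),\gamma(s_i))$ over refinements, which in turn are controlled by $|\gamma'|$ by a Lebesgue differentiation argument.

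The main obstacle is the measure-theoretic bookkeeping to pass from the path's absolute continuity to the absolute continuity of $L$ together with the a.e.\ identification $L' = |\gamma'|$; once these two ingredients are in hand the argument is a one-dimensional change of variables. Finally, the extension from Borel $\rho$ with finite integral to general $\rho \geq 0$ follows by monotone approximation. Since the statement is exactly the content of \cite{Dud:07}, in the actual writeup it suffices to cite that reference after noting that our notion of absolute continuity for paths matches the one used there.
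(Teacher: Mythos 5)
The paper does not prove this lemma at all --- it is quoted as a black box from \cite{Dud:07} --- so your closing remark that one may simply cite that reference matches the paper's treatment. Since you do sketch an argument, the one genuine gap is in the step where you deduce that $L$ satisfies Lusin's condition (N). The justification ``since $\gamma_s$ is $1$-Lipschitz this controls $\mathcal{L}^1(L(E))$ after a short covering argument'' runs in the wrong direction: $1$-Lipschitzness of $\gamma_s$ gives $\mathcal{H}^1(\gamma(E)) = \mathcal{H}^1(\gamma_s(L(E))) \leq \mathcal{L}^1(L(E))$, i.e.\ it bounds the measure of the image by that of $L(E)$, whereas you need the reverse bound; no covering argument based on $1$-Lipschitzness alone can give it (a constant map is $1$-Lipschitz and crushes sets of full measure onto an $\mathcal{H}^1$-null set). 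What is really needed is the special property of the arclength parametrization that it cannot map a set of positive parameter measure into an $\mathcal{H}^1$-null set. This does follow from the identity quoted just before the lemma: choose a Borel set $N \supset \gamma(E)$ with $\mathcal{H}^1(N) = 0$ and apply $\int_\gamma \rho\, ds = \int_0^{\ell(\gamma)} \rho(\gamma_s(t))\, dt$ with $\rho = \chi_N$; the left-hand side equals $\int_X \#(\gamma_s^{-1}(x))\,\chi_N(x)\, d\mathcal{H}^1 = 0$ because $N$ is $\mathcal{H}^1$-null, while the right-hand side equals $\mathcal{L}^1(\gamma_s^{-1}(N)) \geq \mathcal{L}^1(L(E))$ since $L(E) \subset \gamma_s^{-1}(N)$. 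With that repair, Banach--Zarecki indeed gives absolute continuity of $L$, and the change-of-variables step for nonnegative Borel integrands is fine.

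A smaller soft spot is the identification $L' = |\gamma'|$ a.e.: the inequality $\ell(\gamma|_{[c,d]}) \leq \int_c^d |\gamma'|(t)\, dt$ cannot be obtained by bounding the partition sums $\sum_i d(\gamma(s_{i+1}),\gamma(s_i))$ ``by a Lebesgue differentiation argument'', because the estimate $d(\gamma(s),\gamma(s')) \leq \int_s^{s'} |\gamma'|\, dt$ is essentially the statement being proved and using $L'$ instead is circular. The standard device is to introduce the countable family of real-valued absolutely continuous functions $\varphi_j(t) = d(\gamma(t), \gamma(q_j))$ for a dense set $\{q_j\} \subset [a,b]$: each satisfies $|\varphi_j'| \leq |\gamma'|$ a.e., their increments recover the partition sums in the supremum, and this yields $\ell(\gamma|_{[c,d]}) \leq \int_c^d |\gamma'|\, dt$, hence $L' \leq |\gamma'|$ a.e., while the opposite inequality is immediate as you say. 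With these two repairs your outline is a correct self-contained proof of the cited result.
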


\subsection{Modulus of path families}
We continue considering a metric surface $X$. We typically denote a collection of paths by $\Gamma$ and refer to $\Gamma$ as a \emph{path family}. A Borel function $\rho \colon X \rightarrow [0,\infty]$ is \emph{admissible} for a path family $\Gamma$ if
\begin{equation*}
    \int_{ \gamma } \rho \,ds \geq 1 \quad\text{for every $\gamma \in \Gamma$}.
\end{equation*}
Then, for each $1 \leq p < \infty$, we denote
\begin{equation*}
    \Mod_{p} \Gamma
    =
    \inf\left\{ \int_{X} \rho^{p} \,d\mathcal{H}^{2}_X \colon \text{$\rho$ is admissible for $\Gamma$} \right\}.
\end{equation*}
In case $p = \infty$, we set
\begin{equation*}
    \Mod_{p} \Gamma
    =
    \inf\left\{ \|\rho\|_{ L^{\infty}(X) } \colon \text{$\rho$ is admissible for $\Gamma$} \right\}.
\end{equation*}
The set function $\Gamma \mapsto \Mod_p \Gamma$ is an outer measure.

We say that $\Gamma$ is \emph{$p$-negligible} if $\Mod_p \Gamma = 0$. The following characterization of negligible paths is an effective tool. Notice that this characterization does not require the notion of modulus and could be given as a definition of modulus zero without defining modulus, see, e.g. \cite{HKST:15} for a proof.
\begin{lemma}
\label{lem:modzero}
Let $1 \leq p \leq \infty$. A path family $\Gamma$ is \emph{$p$-negligible} if and only if there exists an $L^{p}( X )$-integrable Borel function $h \colon X \rightarrow \left[0,\infty\right]$ such that $\int_{ \gamma } h \,ds = \infty$ for every $\gamma \in \Gamma$.
\end{lemma}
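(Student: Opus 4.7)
The plan is to prove the two implications separately, with the reverse direction being essentially a one-line scaling argument and the forward direction requiring a geometric-sum construction.

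For the ``if'' direction, assume such an $h \in L^p(X)$ exists with $\int_\gamma h\,ds = \infty$ for every $\gamma \in \Gamma$. Then for every $\epsilon > 0$, the Borel function $\epsilon h$ is admissible for $\Gamma$, so $\Mod_p \Gamma \leq \epsilon^p \|h\|_{L^p(X)}^p$ when $p < \infty$ and $\Mod_\infty \Gamma \leq \epsilon \|h\|_{L^\infty(X)}$ when $p = \infty$. Letting $\epsilon \to 0^+$ yields $\Mod_p \Gamma = 0$.

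For the ``only if'' direction, assume $\Mod_p \Gamma = 0$. The plan is to build $h$ as a sum of admissible functions whose $L^p$-norms decay geometrically. For $1 \leq p < \infty$, I would use the definition of modulus to choose, for each $n \in \mathbb{N}$, a Borel admissible $\rho_n$ for $\Gamma$ with $\|\rho_n\|_{L^p(X)} < 2^{-n}$, and set
\[
    h \coloneqq \sum_{n=1}^\infty \rho_n.
\]
The sum is Borel with values in $[0,\infty]$. Minkowski's inequality together with monotone convergence gives $\|h\|_{L^p(X)} \leq \sum_n \|\rho_n\|_{L^p(X)} < 1$, so $h \in L^p(X)$. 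For every $\gamma \in \Gamma$, the definition $\int_\gamma \rho \, ds = \int_X \#(\gamma^{-1}(x)) \rho(x)\,d\mathcal{H}^1(x)$ combined with the classical monotone convergence theorem (applicable because $x \mapsto \#(\gamma^{-1}(x))$ is $\mathcal{H}^1$-measurable by \Cref{rem:measurablemultiplicity}) produces
\[
    \int_\gamma h \, ds = \sum_{n=1}^\infty \int_\gamma \rho_n \, ds \geq \sum_{n=1}^\infty 1 = \infty.
\]
The case $p = \infty$ is identical: pick Borel $\rho_n$ admissible for $\Gamma$ with $\|\rho_n\|_{L^\infty(X)} < 2^{-n}$, and the subadditivity of the $L^\infty$-norm on nonnegative functions gives $\|h\|_{L^\infty(X)} \leq 1$, while the path-integral computation is unchanged.

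There is no deep obstacle here. The only slightly delicate point is the term-by-term integration along $\gamma$; this is however a straightforward instance of monotone convergence against the fixed Borel measure $\#(\gamma^{-1}(\cdot))\,d\mathcal{H}^1$ on $X$, so it causes no difficulty once the measurability of the multiplicity function is in place.
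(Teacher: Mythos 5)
Your proof is correct, and it is precisely the standard argument: the paper gives no proof of \Cref{lem:modzero}, deferring to \cite{HKST:15}, where the same scaling argument for the ``if'' direction and the geometric-sum construction $h=\sum_n\rho_n$ with $\|\rho_n\|_{L^p}<2^{-n}$ for the ``only if'' direction appear. Your handling of the term-by-term path integration via the measurability of $x\mapsto\#(\gamma^{-1}(x))$ and monotone convergence, as well as the $p=\infty$ case via subadditivity of the essential supremum, is sound.
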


\subsection{Sobolev analysis} \label{sobosec}
Let $X$ be a metric surface and $Y$ a metric space. Let $u \colon X \rightarrow Y$ be a map and $\rho \colon X \rightarrow \left[0,\infty\right]$ a Borel function. If $\gamma \colon \left[a,b\right] \rightarrow X$ is rectifiable, we say that the triple $(u,\rho, \gamma)$ satisfies the \emph{upper gradient inequality} if
\begin{equation*}
    d( u( \gamma(a) ), u( \gamma(b) ) ) \leq \int_{ \gamma } \rho \,ds.
\end{equation*}
If the triple $(u,\rho,\gamma)$ satisfies the upper gradient inequality for every path outside a $p$-negligible family, we say that $\rho$ is a \emph{$p$-weak upper gradient} of $u$. If the exceptional set of paths is empty, we say $\rho$ is an \emph{upper gradient} of $u$. We call $\rho$ a (locally) $p$-integrable ($p$-weak) upper gradient if $\rho$ is a (locally) $p$-integrable ($p$-weak) upper gradient of $u$.

If $u$ has a $p$-integrable $p$-weak upper gradient, then there exists a $p$-weak upper gradient $\rho$ such that $\rho \leq \rho'$ almost everywhere for every other $p$-integrable $p$-weak upper gradient $\rho'$ of $u$. For $1 \leq p < \infty$, this is proved in \cite[Theorem 5.3.23]{HKST:15} and for $p = \infty$ a similar argument works, cf. \cite{Ma:13}. Any $p$-minimal $p$-weak upper gradient of $u$ is denoted by $\rho_{u}$; we typically omit the $p$ from the notation since $p$ is clear from the context.

Whenever $1 \leq p \leq \infty$, we write $u \in D^{1,p}( X; Y )$ whenever $u$ has a $p$-integrable upper gradient. In case $Y = \mathbb{R}$, we also use $u \in D^{1,p}( X )$. For a thorough exposition of the topic of Sobolev analysis on metric measure spaces, see \cite{HKST:15}.

We recall the following fact.
\begin{lemma}\label{lemm:badpaths}
Let $X$ be a metric surface and $Y$ a metric space. Let $u \colon X \rightarrow Y$ be a map with a $p$-integrable $p$-weak upper gradient $\rho$. Let $\Gamma_0$ denote the collection of all rectifiable paths $\gamma \colon \left[a,b\right] \rightarrow X$ for which one of the following occurs: $u \circ \gamma$ is not rectifiable, $\ell( u \circ \gamma ) > \int_{ \gamma } \rho \,ds$, or $\int_{ \gamma } \rho \,ds = \infty$. Then $\Gamma_0$ has negligible $p$-modulus.
\end{lemma}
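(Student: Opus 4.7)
The plan is to exhibit $\Gamma_0$ inside the union of two easily-handled $p$-negligible families and then invoke subadditivity of $\Mod_p$, which the excerpt has already observed is an outer measure. Let $\Gamma_1$ be the exceptional family from the definition of $p$-weak upper gradient, so $\Mod_p \Gamma_1 = 0$ and the upper gradient inequality $d(u(\gamma(a)), u(\gamma(b))) \leq \int_\gamma \rho \, ds$ holds for every path outside $\Gamma_1$. Let $\Gamma_\infty \coloneqq \{\gamma : \int_\gamma \rho \, ds = \infty\}$. Since $\rho \in L^p(X)$, \Cref{lem:modzero} with test function $h = \rho$ gives $\Mod_p \Gamma_\infty = 0$. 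The new family I need is $\widetilde{\Gamma}_1$, the collection of rectifiable paths having at least one subpath in $\Gamma_1$.

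To show $\Mod_p \widetilde{\Gamma}_1 = 0$, I reuse the Borel function $h \in L^p(X)$ provided by \Cref{lem:modzero} for $\Gamma_1$, which satisfies $\int_{\gamma'} h \, ds = \infty$ for every $\gamma' \in \Gamma_1$. For any $\gamma \in \widetilde{\Gamma}_1$ with a subpath $\gamma' \in \Gamma_1$, the pointwise inequality $\#(\gamma')^{-1}(x) \leq \#\gamma^{-1}(x)$ gives $\int_\gamma h \, ds \geq \int_{\gamma'} h \, ds = \infty$, and a second application of \Cref{lem:modzero} finishes the claim.

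The main step is to verify $\Gamma_0 \subset \widetilde{\Gamma}_1 \cup \Gamma_\infty$. Take $\gamma \colon [a,b] \to X$ outside both families. For any partition $a = t_0 < t_1 < \cdots < t_k = b$, no subpath $\gamma_i \coloneqq \gamma|_{[t_{i-1},t_i]}$ belongs to $\Gamma_1$, so summing the upper gradient inequality yields
\[
    \sum_{i=1}^k d(u(\gamma(t_{i-1})), u(\gamma(t_i))) \leq \sum_{i=1}^k \int_{\gamma_i} \rho \, ds.
\]
The identity $\sum_i \int_{\gamma_i}\rho \, ds = \int_\gamma \rho \, ds$ is immediate from the definition of the path integral: $\sum_i \#\gamma_i^{-1}(x)$ differs from $\#\gamma^{-1}(x)$ only on the finite set $\gamma(\{t_1,\ldots,t_{k-1}\})$, which is $\mathcal{H}^1$-null. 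Passing to the supremum over partitions gives $\ell(u\circ\gamma) \leq \int_\gamma \rho \, ds < \infty$, so $u\circ\gamma$ is rectifiable and obeys the length inequality; in particular $\gamma \notin \Gamma_0$.

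Subadditivity of $\Mod_p$ yields $\Mod_p \Gamma_0 \leq \Mod_p \widetilde{\Gamma}_1 + \Mod_p \Gamma_\infty = 0$. The only real subtlety is the ``subpath closure'' step $\Mod_p \widetilde{\Gamma}_1 = 0$; once one notices that $h$ serves as a test function simultaneously for $\Gamma_1$ and for every path enlarging an element of $\Gamma_1$, the rest is a bookkeeping argument with partition sums.
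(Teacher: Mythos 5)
Your proof is correct, and it fills in an argument the paper does not actually write out: the paper's ``proof'' is just the citation to \cite[Propositions 6.3.2 and 6.3.3]{HKST:15} together with the remark that the same argument works for $p=\infty$. What you give is essentially that textbook argument made self-contained with the paper's own tools: the family $\Gamma_\infty$ is killed by taking $h=\rho$ in \Cref{lem:modzero}, the subpath-closure family $\widetilde{\Gamma}_1$ is handled by reusing the test function for the exceptional family $\Gamma_1$ together with the multiplicity monotonicity $\#(\gamma')^{-1}(x)\le\#\gamma^{-1}(x)$, and the partition-sum step combined with additivity of the multiplicity-defined path integral (the division points form a finite, hence $\mathcal{H}^1$-null, set) yields $\ell(u\circ\gamma)\le\int_\gamma\rho\,ds<\infty$ for every remaining rectifiable path, so $\Gamma_0\subset\widetilde{\Gamma}_1\cup\Gamma_\infty$ and subadditivity of $\Mod_p$ finishes. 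A small bonus of your version is that, since \Cref{lem:modzero} is stated for all $1\le p\le\infty$, the case $p=\infty$ is covered explicitly rather than asserted in passing; the only cosmetic point is to make sure ``subpath'' is understood to include the whole path (or to add $\Gamma_1$ itself to the union), since the trivial partition uses the upper gradient inequality for $\gamma$ itself.
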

\begin{proof}
For $1 \leq p < \infty$, the claim follows from \cite[Propositions 6.3.2 and 6.3.3]{HKST:15}. The same argument also works for $p = \infty$.
\end{proof}

\subsection{Some topology of the plane and continua}
In this section, we consider a metric surface $U$ homeomorphic to $\mathbb{R}^2$. For many of the topological results of this section, this is a crucial assumption.

We first recall some topological results about separation of sets and continua. Recall that $A \subset U$ is a \emph{continuum} if $A$ is compact and connected. We say a set $A\subset U$ \emph{separates} $x$ and $y$ if they belong to different connected components of $U\setminus A$.

\begin{lemma}[{\cite[Chapter 2, Lemma 5.20]{Wil:79}}]\label{lemm:separation}
If $K \subset U$ is compact and $x, y \in U \setminus K$ are separated by $K$, then there exists a continuum $C \subset K$ such that $x$ and $y$ are separated by $C$. In particular, if a compact set separates two points, then a connected component of the compact set separates them.
\end{lemma}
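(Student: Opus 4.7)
The plan is to produce, via Zorn's lemma, a compact subset $C_0 \subset K$ that is minimal among compact separators of $x$ and $y$, and then to show that $C_0$ is necessarily connected (hence the sought continuum). Let $\mathcal{F}$ denote the family of compact subsets of $K$ which separate $x$ from $y$, ordered by reverse inclusion. The set $K$ itself lies in $\mathcal{F}$, so $\mathcal{F} \neq \emptyset$. For any descending chain $(C_\alpha)$ in $\mathcal{F}$ one needs $\bigcap_\alpha C_\alpha \in \mathcal{F}$: if not, then $x$ and $y$ lie in a common connected component of $U \setminus \bigcap_\alpha C_\alpha$, which is open and (since $U$ is a topological surface) locally path-connected, so one may pick a compact arc $P$ in it joining $x$ to $y$. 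The sets $P \cap C_\alpha$ then form a descending family of compacta with empty intersection; by compactness some $P \cap C_\alpha$ is already empty, contradicting that $C_\alpha$ separates $x$ from $y$. Zorn's lemma therefore produces a minimal $C_0 \in \mathcal{F}$.

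Next I would argue that $C_0$ is connected. Suppose for contradiction $C_0 = A \cup B$ with nonempty disjoint compact $A$ and $B$. By minimality of $C_0$, neither $A$ nor $B$ separates $x$ from $y$. Here planarity of $U$ is decisive: using $U \cong \mathbb{R}^2$, one-point compactification yields $\widetilde{U} \cong \mathbb{S}^2$, and for any compact $F \subset U$ the property of separating $x$ from $y$ is the same in $U$ as in $\widetilde{U}$ (the unbounded component of $U \setminus F$ absorbs the added point $\infty$ without altering separations among points of $U$). Janiszewski's separation theorem on $\mathbb{S}^2$ asserts that if two closed subsets with connected intersection each fail to separate a given pair of points, then their union also fails to do so. Applied to the disjoint pair $A, B$ (whose empty intersection is vacuously connected), it forces $C_0 = A \cup B$ to not separate $x$ from $y$, a contradiction. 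Hence $C_0$ is connected, and as a compact connected subset of $K$ separating $x$ from $y$ it is the desired continuum. The ``in particular'' clause then follows: the continuum $C_0$ lies in a single connected component $K_0$ of $K$, and $U \setminus K_0 \subset U \setminus C_0$ inherits the property that $x$ and $y$ lie in distinct connected components, so $K_0$ also separates them.

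The principal obstacle is the connectedness of the minimal separator. The compactness argument securing Zorn's hypothesis is routine; the substance of the proof lies in invoking a planar separation theorem (Janiszewski) to exclude disconnected minimal separators, and this is precisely where the assumption $U \cong \mathbb{R}^2$ is genuinely used.
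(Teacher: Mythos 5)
Your argument is correct, but note that the paper does not prove this lemma at all: it is quoted verbatim from Wilder \cite[Chapter 2, Lemma 5.20]{Wil:79}, so you are supplying the argument that the paper outsources to the literature. What you write is essentially the classical proof behind that citation: Zorn's lemma (equivalently, Brouwer's reduction theorem) produces a minimal compact separator $C_0\subset K$, and a planar separation theorem rules out a disconnected minimal separator; the passage from the continuum $C_0$ to the component of $K$ containing it is handled correctly, since enlarging a closed separator within $K$ (with $x,y\in U\setminus K$) preserves separation. Two small points deserve care. First, the Zorn step is fine as written: for a descending chain the intersection is compact, and your compactness argument with a path $P$ joining $x$ to $y$ (using that open connected subsets of $U\cong\mathbb{R}^2$ are path-connected) shows the intersection still separates. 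Second, the key step quotes Janiszewski's theorem in the case $A\cap B=\emptyset$; many formulations require $A\cap B$ to be a (nonempty) connected set, and whether the empty set counts as connected is author-dependent, so you should either cite a version that explicitly allows empty intersection or invoke the Phragm\'en--Brouwer property of $\mathbb{S}^2$ (equivalently its unicoherence), which is exactly the statement you need for disjoint closed sets; your reduction from $U\cong\mathbb{R}^2$ to $\mathbb{S}^2$ via one-point compactification is correct because a compact set has exactly one unbounded complementary component, so separation of $x,y$ is unaffected by adjoining $\infty$. With that reference made precise, your proof stands as a self-contained alternative to the citation, at the cost of importing a nontrivial planar separation theorem.
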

A continuous image of a compact subinterval of $\mathbb{R}$ is called a \emph{Peano continuum}. The next result claims that continua with finite $1$-dimensional Hausdorff measure are  {Peano continua}. An example of a continuum that is not a Peano continuum is the Warsaw circle.
\begin{lemma}[{\cite[Proposition 5.1]{RR:19}}]\label{lemm:rectifiablepaths}
Let $K \subset U$ be a continuum. If $\mathcal{H}^{1}( K ) < \infty$, then there exists a $1$-Lipschitz surjection $\gamma \colon \left[0, 2\mathcal{H}^{1}( K )\right] \rightarrow K$ such that ${\#}( \gamma^{-1}(x) ) \leq 2$ for $\mathcal{H}^{1}$-almost every $x \in K$.
\end{lemma}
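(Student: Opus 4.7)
The plan is to approximate $K$ by finite subtrees, parametrize each by an Euler tour of its doubling, and pass to an Arzelà--Ascoli limit.

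Since $K$ is a continuum with $\mathcal{H}^{1}(K) < \infty$, the classical Eilenberg--Harrold theorem gives that $K$ is arcwise connected and locally arcwise connected (a Peano continuum). Fixing a countable dense set $\{x_i\}_{i \geq 1} \subset K$, I would inductively build a nested sequence of finite topological subtrees $T_1 \subset T_2 \subset \cdots \subset K$ with $\{x_1,\dots,x_n\} \subset T_n$, each $T_n$ chosen to be $\mathcal{H}^{1}$-length minimizing among subcontinua of $K$ containing $T_{n-1} \cup \{x_n\}$. Minimality forces every $T_n$ to be a topological tree (no simple loops can be present without violating length-minimality), and one has $\mathcal{H}^{1}(T_n) \leq \mathcal{H}^{1}(K)$.

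For each such finite tree $T_n$, apply the standard Euler tour of the doubled tree to obtain a $1$-Lipschitz surjection $\gamma_n \colon [0, 2\mathcal{H}^{1}(T_n)] \to T_n$ that traverses each edge of $T_n$ exactly twice; thus $\#\gamma_n^{-1}(x) = 2$ outside the finite vertex set of $T_n$. Extend each $\gamma_n$ by a constant to a $1$-Lipschitz map defined on the full interval $[0, 2\mathcal{H}^{1}(K)]$ and, using equicontinuity together with compactness of $K$, apply Arzelà--Ascoli to extract a uniformly convergent subsequence with limit $\gamma \colon [0, 2\mathcal{H}^{1}(K)] \to K$. The limit map $\gamma$ is $1$-Lipschitz, and its surjectivity follows from the density of $\bigcup_n T_n$ in $K$ combined with uniform convergence into the compact target.

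The main obstacle is establishing the multiplicity bound $\#\gamma^{-1}(x) \leq 2$ for $\mathcal{H}^{1}$-a.e.\ $x \in K$. The integral inequality $\int_K \#\gamma^{-1}\,d\mathcal{H}^{1} = \ell(\gamma) \leq 2\mathcal{H}^{1}(K)$ coming from the area formula is not by itself enough to enforce it pointwise, because it allows large multiplicity on a small set provided it is compensated by multiplicity one elsewhere. One must instead exploit the tree structure of the approximations: for any $x \in T_m$ that is not a branch vertex of any $T_n$, $\gamma_n^{-1}(x)$ consists of exactly two points for all $n \geq m$, and one shows via a careful preimage-accumulation argument --- in the spirit of \cite[Proposition 5.1]{RR:19} --- that uniform convergence cannot split these two preimages into three or more in the limit for $\mathcal{H}^{1}$-a.e.\ such $x$. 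Since $\bigcup_m T_m$ contains $\mathcal{H}^{1}$-almost all of $K$ (the non-covered part having length zero by our minimal construction), this yields the desired pointwise bound.
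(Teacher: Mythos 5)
First, a point of comparison: the paper does not prove this lemma at all --- it is quoted verbatim from \cite[Proposition 5.1]{RR:19} --- so your proposal is measured against that cited proof rather than anything internal to the paper. Your skeleton (nested finite subtrees through a dense set, Euler tour of the doubled tree, Arzel\`a--Ascoli limit) is indeed the standard way such parametrizations are built, and the tree construction, the $1$-Lipschitz Euler tours, and the surjectivity of the limit are fine. But there is a genuine gap exactly at the step you yourself flag as the main obstacle: the bound ${\#}(\gamma^{-1}(x)) \leq 2$ for $\mathcal{H}^1$-a.e.\ $x$. Deferring to ``a careful preimage-accumulation argument in the spirit of \cite[Proposition 5.1]{RR:19}'' is circular, since that proposition \emph{is} the statement being proved, and the heuristic you offer (uniform convergence cannot split two preimages into three or more) is false as a pointwise principle: multiplicity is not upper semicontinuous under uniform limits, because $\gamma_n$ can pass \emph{near} $x$ twice while the limit passes \emph{through} $x$ many times. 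The whole difficulty is to rule this out on a set of positive $\mathcal{H}^1$-measure, and no mechanism for that is given. A second unproved assertion is that $\bigcup_m T_m$ covers $\mathcal{H}^1$-almost all of $K$; ``by our minimal construction'' is not a reason --- the standard justification is Go\l{}ab's semicontinuity theorem ($T_n \to K$ in Hausdorff distance, hence $\mathcal{H}^1(K) \leq \liminf_n \mathcal{H}^1(T_n) \leq \mathcal{H}^1(K)$), which you never invoke; it is also what guarantees the existence of your length-minimizing trees, although minimality is unnecessary (attaching to $T_{n-1}$ an arc from $x_n$ truncated at its first hit of $T_{n-1}$ already produces finite trees).

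One way to close the gap inside your framework is measure-theoretic rather than pointwise: parametrize each Euler tour by arclength, so that by the area formula $(\gamma_n)_{\#}\mathcal{L}^1 \leq 2\,\mathcal{H}^1|_{K} + \epsilon_n \delta_{x_1}$ with $\epsilon_n = 2(\mathcal{H}^1(K)-\mathcal{H}^1(T_n)) \to 0$ by Go\l{}ab. Uniform convergence on the fixed interval gives $(\gamma_n)_{\#}\mathcal{L}^1 \rightharpoonup \gamma_{\#}\mathcal{L}^1$, hence $\gamma_{\#}\mathcal{L}^1 \leq 2\,\mathcal{H}^1|_{K}$, and then the area formula applied to the $1$-Lipschitz limit yields $\int_A {\#}(\gamma^{-1}(x))\,d\mathcal{H}^1 \leq \mathcal{L}^1(\gamma^{-1}(A)) \leq 2\mathcal{H}^1(A)$ for every Borel $A \subset K$, which forces ${\#}(\gamma^{-1}(x)) \leq 2$ almost everywhere. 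Without an argument of this kind (or the one actually carried out in \cite{RR:19}), the proposal does not prove the lemma.
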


Given two sets $K \subset V \subset U$, we say that $K \subset V$ is \emph{relatively closed (resp. open) in $V$} if there exists a closed (resp. open) set $K' \subset U$ for which $K = K' \cap K$. When the set $V$ is clear from the context, we say that $K$ is relatively closed (resp. open).

For Peano continua, there is a stronger conclusion than in \Cref{lemm:separation}.
\begin{lemma}[{\cite[Chapter 4, Theorem 6.7]{Wil:79}}]\label{lemm:Jordan}
If a Peano continuum $K \subset U$ separates $x, y \in U \setminus K$, then there exists a subcontinuum $K' \subset K$ that is homeomorphic to the unit circle $\mathbb{S}^{1}$ and separates $x$ and $y$.
\end{lemma}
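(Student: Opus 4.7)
The plan is to follow the classical cyclic element approach in planar topology, which is the route taken in Wilder's book. Since $U$ is homeomorphic to $\mathbb{R}^{2}$, I may identify $U$ with the plane (or pass to its one-point compactification $\mathbb{S}^{2}$ when it is convenient to have compactness). Let $V_x$ and $V_y$ denote the connected components of $U \setminus K$ containing $x$ and $y$ respectively; by hypothesis they are distinct.

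My first step is to reduce to a boundary problem. The frontier $\partial V_x$ is contained in $K$ and still separates $x$ from $y$, because any path from $x$ to $y$ in $U$ must exit $V_x$ through $\partial V_x$. By Torhorst's theorem, the boundary of a complementary component of a Peano continuum in $\mathbb{S}^{2}$ is itself a Peano continuum, so $\partial V_x$ is locally connected. Thus it suffices to find a simple closed curve $J \subset \partial V_x$ that separates $x$ and $y$.

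The next step is to invoke Whyburn's cyclic element theory. A Peano continuum decomposes along its cut points into maximal \emph{true cyclic elements}, with the essential property that any two distinct points of a true cyclic element lie on a common simple closed curve contained in that element. Walking along the tree-like cut-point/cyclic-element decomposition of $\partial V_x$ from the $V_x$ side to the $V_y$ side, one can locate a single true cyclic element $C \subset \partial V_x$ whose removal already disconnects $x$ from $y$ in $U$; the separating object must be a full nondegenerate cyclic element rather than a single cut point, since removing one point from $U \cong \mathbb{R}^{2}$ leaves a connected complement. Finally, I would pick two points of $C$ accessible from $V_x$ and $V_y$ respectively and take a simple closed curve $J \subset C$ through them; by the position of these points relative to $V_x$ and $V_y$, the curve $J$ inherits the separation property, yielding the desired $K' = J$.

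The hard part is not any single step but making the combinatorics rigorous: one must verify simultaneously that the cyclic element $C$ is the object on which the separation is localized and that an admissible choice of simple closed curve in $C$ retains the separation property in $U$. This is precisely the content of \cite[Ch.\ 4, Thm.\ 6.7]{Wil:79}, which is why the paper simply cites the result rather than reconstructing the planar-topology machinery.
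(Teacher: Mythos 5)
The paper never proves this lemma: it is invoked as a black box with the citation to Wilder, so there is no in-paper argument to compare against, and citing the classical literature is exactly what the authors intend. Judged as a standalone sketch, your outline does follow the classical route (reduction to $\partial V_x$, Torhorst's theorem, cyclic element theory), but the final step contains a genuine gap. Choosing two points of the true cyclic element $C$ accessible from $V_x$ and $V_y$ and taking \emph{some} simple closed curve in $C$ through them does not, by itself, give a separating curve. Concretely, let $C$ be the closed annulus $\{1\leq |z|\leq 2\}\subset\mathbb{R}^2$ (a cyclic Peano continuum), $x=0$, $y=3$, and take the accessible points $p=1$ and $q=2$; the circle $|z-\tfrac32|=\tfrac12$ lies in $C$ and passes through $p$ and $q$, yet separates neither $x$ from $y$ nor anything relevant. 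So ``by the position of these points relative to $V_x$ and $V_y$'' is not an argument: the separation property depends on \emph{which} simple closed curve through those points one takes, and that is precisely the nontrivial content being skipped.

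The standard way to close this (and morally what the cited theorem does) is different at the end: once you have a true cyclic element $C$ of $\partial V_x$ that separates $x$ from $y$ --- and note that this localization is itself a theorem of Whyburn's cyclic element theory, not just a ``walk along the tree'', since one must rule out that the separation is carried only by limits of infinitely many cyclic elements --- you use the fact that every complementary domain of a cyclic Peano continuum in $\mathbb{S}^2$ is a Jordan domain. Let $D_x$ be the component of the complement of $C$ containing $x$ and set $J=\partial D_x\subset C$, a simple closed curve. Since $\partial D_x=J$ and the Jordan domain of $J$ containing $D_x$ is connected and disjoint from $J$, that domain must equal $D_x$; as $y\notin D_x\cup J$, the point $y$ lies in the other Jordan domain, so $J\subset C\subset K$ separates $x$ and $y$. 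With this replacement for your last step (and an honest citation for the cyclic-element localization), the sketch becomes the classical proof; as written, the final step would fail.
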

We say that a Peano continuum $K$ is \emph{simple} if it is homeomorphic either to a point, to a compact interval, or to the unit circle $\mathbb{S}^{1}$. A homeomorphic image of a compact interval is called \emph{an arc}. Observe that any subcontinuum of a simple Peano continuum is again simple.
\begin{lemma}\label{lemm:simple}
Let $\mathcal{F}$ be a collection of pairwise disjoint Peano continua in $U$ and $\mathcal{F}' \subset \mathcal{F}$ the subcollection containing the ones that are not simple. Then $\mathcal{F}'$ is countable.
\end{lemma}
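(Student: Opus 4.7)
My plan is to reduce the statement to a classical theorem of R.L.~Moore which says that every pairwise disjoint collection of triods in $\mathbb{R}^{2}$ is countable. Here a \emph{triod} means a continuum obtained as the union of three arcs sharing a single common endpoint $p$ and otherwise disjoint; equivalently, a continuum homeomorphic to the letter $\mathsf{T}$.

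The first step is to extract a triod inside every non-simple Peano continuum. Every Peano continuum is locally connected, and the classical characterization of simple Peano continua states that a locally connected continuum is simple (that is, homeomorphic to a point, a compact interval, or $\mathbb{S}^{1}$) precisely when the order of each of its points in the sense of Menger--Urysohn is at most $2$. Hence any $K \in \mathcal{F}'$ admits some point $p_K$ of order at least $3$, and by local arc-connectivity one may choose three arcs in $K$ emanating from $p_K$ whose pairwise intersections reduce to $\{p_K\}$; their union is a triod $T_K \subset K$.

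Once such a $T_K$ has been selected for every $K \in \mathcal{F}'$, the pairwise disjointness of $\mathcal{F}$ forces the family $\{T_K\}_{K \in \mathcal{F}'}$ to be pairwise disjoint inside $U$. Transporting this family across a homeomorphism $U \cong \mathbb{R}^{2}$ and invoking Moore's theorem yields the countability of $\{T_K\}$, and hence of $\mathcal{F}'$.

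The main obstacle is the triod extraction: one must ensure that the three arcs issuing from $p_K$ can be taken to meet only at $p_K$, whereas a naive arc-joining argument might produce arcs that overlap near $p_K$ before branching off. A convenient remedy is to work inside a small connected open neighbourhood $V$ of $p_K$ in $K$ and to use the order condition at $p_K$ to locate three points lying in three distinct components of $V \setminus \{p_K\}$; arcs joining $p_K$ to these points through their respective components are then automatically disjoint off of $p_K$, producing the desired triod.
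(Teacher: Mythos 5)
Your overall strategy is exactly the one the paper uses: extract a junction point (triod) from each non-simple Peano continuum and invoke Moore's theorem on pairwise disjoint triods, transported to $U$ via the homeomorphism with $\mathbb{R}^2$. The difference is that the paper simply cites the characterization ``a Peano continuum is not simple if and only if it contains a junction point'' from \cite{Nta:20}, whereas you try to prove the extraction step yourself, and it is there that your argument has a genuine gap. Your ``remedy'' asserts that a point $p_K$ of Menger--Urysohn order at least $3$ admits a small connected neighbourhood $V$ in $K$ such that $V \setminus \{p_K\}$ has at least three components. This is false: take $K$ to be a closed disk in $U$ (a perfectly admissible non-simple member of $\mathcal{F}$). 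Every point of the disk has order $\mathfrak{c} \geq 3$, yet for every point $p$ and every connected neighbourhood $V$ of $p$ in $K$, the set $V \setminus \{p\}$ is connected, so your selection of three points in three distinct components is impossible. Order is defined through the cardinality of boundaries of small neighbourhoods, not through the number of components left after deleting the point, and the two notions genuinely differ.

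The conclusion you want at that step is nevertheless true, but it is a nontrivial classical theorem rather than an elementary consequence of the order condition: Menger's $n$-arc theorem (``$n$-Beinsatz'') states that in a Peano continuum a point of order at least $n$ is the common endpoint of $n$ arcs that are pairwise disjoint except at that point; with $n=3$ this produces the triod $T_K$. So the proof is repaired either by citing Menger's theorem at this point, or by doing what the paper does and citing the junction-point characterization of non-simple Peano continua from \cite{Nta:20} directly, after which the appeal to Moore's theorem \cite{Moore:28} goes through exactly as you describe.
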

\begin{proof}
A point $x_0$ in a Peano continuum $K$ is a \emph{junction point}, if there exists three (compact) arcs $E_{1}, E_{2}, E_3$ in $K$ that meet at $x_0$ but are otherwise disjoint. A Peano continuum is not simple if and only if it contains a junction point \cite{Nta:20}. Thus every element in $\mathcal{F}'$ has a junction point. A theorem by Moore \cite[Theorem 1]{Moore:28} states that there cannot be an uncountable collection of pairwise disjoint Peano continua in $U$ if each of them contains a junction point. In particular, $\mathcal{F}'$ must be countable.
\end{proof}

Suppose that $f \colon [0,1] \rightarrow K \subset U$ is a homeomorphism and $\mathcal{H}^{1}( K ) < \infty$. Then $P(t) = \mathcal{H}^1( f( [0,t] ) )$ is strictly increasing, continuous, and bounded. Then $\gamma(t) = f \circ P^{-1}(t)$ is a Lipschitz path. With this fact, the lemma below readily follows.
\begin{lemma}
Let $K$ be a simple Peano continuum with $\mathcal{H}^{1}( K ) < \infty$. Then there exists a surjective Lipschitz $\gamma \colon [a,b] \rightarrow K$, injective outside its end points.
\end{lemma}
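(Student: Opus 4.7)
The plan is to handle the three possibilities for $K$ in the definition of a simple Peano continuum---a point, an arc, or the circle $\mathbb{S}^{1}$---separately. The point case is trivial ($\gamma$ constant). In the arc case, the construction in the paragraph immediately preceding the statement essentially gives the result: given a homeomorphism $f\colon[0,1]\to K$, set $P(t)=\mathcal{H}^1(f([0,t]))$, and define $\gamma=f\circ P^{-1}\colon[0,\mathcal{H}^1(K)]\to K$. Then $\gamma$ is surjective and inherits injectivity from $f$. That $\gamma$ is $1$-Lipschitz follows from the one-line estimate
\begin{equation*}
    |P(t_2)-P(t_1)|=\mathcal{H}^1(f([t_1,t_2]))\geq \operatorname{diam} f([t_1,t_2])\geq d(\gamma(P(t_2)),\gamma(P(t_1))),
\end{equation*}
using that any continuum in a metric space has $\mathcal{H}^1$-measure at least its diameter.

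For the circle case, I would parametrize $\mathbb{S}^{1}$ by $[0,1]$ via the quotient $t\mapsto e^{2\pi i t}$ and compose with a homeomorphism $\mathbb{S}^{1}\to K$ to obtain a continuous surjection $f\colon[0,1]\to K$ with $f|_{[0,1)}$ injective and $f(0)=f(1)$. Defining $P(t)=\mathcal{H}^1(f([0,t]))$ and $\gamma=f\circ P^{-1}$ exactly as before, the proof reduces to verifying that $P\colon[0,1]\to[0,\mathcal{H}^1(K)]$ is a homeomorphism. Continuity of $P$ follows from continuity of the Radon measure $\mathcal{H}^1|_K$ on nested families of compacta (from below by monotone convergence, from above using $\mathcal{H}^1(K)<\infty$) together with $\mathcal{H}^1(\{f(t)\})=0$. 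Strict monotonicity amounts to $P(t_2)-P(t_1)=\mathcal{H}^1(f([t_1,t_2]))>0$ whenever $t_1<t_2$: the intersection $f([0,t_1])\cap f([t_1,t_2])$ consists of at most two points (namely $f(t_1)$ and possibly $f(0)=f(1)$), so has $\mathcal{H}^1$-measure zero, while $f([t_1,t_2])$ is a nondegenerate continuum of positive diameter. Once $P$ is known to be a homeomorphism, the $1$-Lipschitz property of $\gamma$ is identical to the arc case, and injectivity of $\gamma$ on $(0,\mathcal{H}^1(K))$ comes from injectivity of $f$ on $[0,1)$, with $\gamma(0)=\gamma(\mathcal{H}^1(K))$.

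The main subtlety I anticipate is the boundary case $t_2=1$ in the strict-monotonicity argument: although $f(0)=f(1)$, one must still verify $P(t_1)<\mathcal{H}^1(K)=P(1)$ for every $t_1<1$, which requires noting that $f([t_1,1])$ remains a nondegenerate continuum and that its overlap with $f([0,t_1])$ is an $\mathcal{H}^1$-null finite set. This is the only place where the double-point of $f$ at the endpoints has to be handled explicitly; every other step in the circle case matches the arc case verbatim, and putting the three cases together yields the claimed parametrization.
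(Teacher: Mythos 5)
Your proposal is correct and follows essentially the same route as the paper: the paper's argument is exactly the construction $P(t)=\mathcal{H}^1(f([0,t]))$, $\gamma=f\circ P^{-1}$ from the paragraph preceding the lemma, from which it says the lemma "readily follows." You merely fill in the details the paper leaves implicit (strict monotonicity and continuity of $P$, the diameter bound $\mathcal{H}^1\geq\operatorname{diam}$ for the Lipschitz estimate, and the circle case with its double point at the endpoints), all of which check out.
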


For the purposes of future sections, we establish the following.
\begin{lemma}
\label{lemm:rectifiablepaths:exhaust}
Let $E \subset U$ be a closed and connected set. If $\mathcal{H}^{1}( E ) < \infty$, then there exists a sequence of continua $( E_n )_{ n = 1 }^{ \infty }$ with $E_n \subset E_{n+1}$ and $E = \bigcup_{ n = 1 }^{ \infty } E_n$.
\end{lemma}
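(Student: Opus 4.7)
\noindent The plan is to exhaust $E$ by taking the connected component of a fixed basepoint $x_0 \in E$ in each piece of a compact exhaustion of $U$. Since $U$ is a metric topological $2$-manifold, it is locally compact, Hausdorff, and second countable, hence $\sigma$-compact; so I can choose open sets $W_1 \subset W_2 \subset \cdots$ with $\overline{W_n}$ compact, $\overline{W_n} \subset W_{n+1}$, and $\bigcup_n W_n = U$. Assuming $E \neq \emptyset$ (the empty case is trivial), I fix $x_0 \in E$ and, after re-indexing if needed, assume $x_0 \in W_1$; set $F_n := E \cap \overline{W_n}$, which is compact with $F_n \subset F_{n+1}$, and let $E_n$ denote the connected component of $x_0$ in $F_n$. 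Each $E_n$ is then a continuum, and $E_n \subset E_{n+1}$ because $E_n$ is a connected subset of $F_{n+1}$ containing $x_0$.

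To prove $E = \bigcup_n E_n$, given $y \in E$ it suffices to exhibit a continuum $C \subset E$ with $x_0, y \in C$: being compact, $C$ lies in some $\overline{W_N}$, and being a connected subset of $F_N$ containing $x_0$, it lies in $E_N$, whence $y \in E_N$. I propose to build $C$ by a clopen argument on
\[
A := \{\, y \in E : \text{there exists a continuum } C \subset E \text{ with } x_0, y \in C \,\};
\]
since $x_0 \in A$ trivially, once $A$ is shown to be both open and closed in $E$, connectedness of $E$ forces $A = E$. The workhorse estimate throughout is the bound $\operatorname{diam}(K) \leq \mathcal{H}^1(K)$ for any connected $K$, obtained by applying the $1$-Lipschitz map $d(x_0, \cdot)$ to $K$. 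It implies that $E$ is totally bounded---otherwise the disjoint pieces $E \cap \overline{B(x_i, \varepsilon/3)}$ about an $\varepsilon$-separated infinite subset would each contribute $\geq \varepsilon/3$ to $\mathcal{H}^1(E)$, a contradiction---and it uniformly bounds the continua $C_k$ witnessing $y_k \in A$, $y_k \to y$, by $\mathcal{H}^1(E)$. A Blaschke-type Hausdorff selection from the $C_k$, carried out inside the totally bounded set $E$, then delivers a limit continuum through $x_0$ and $y$, giving closedness of $A$.

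The main obstacle will be openness of $A$, which I expect to reduce to local connectedness of $E$ at points of $A$. For $y \in A$ and a precompact open neighborhood $V$ of $y$, the compact set $E \cap \overline{V}$ has finite $\mathcal{H}^1$-measure, so the diameter--measure estimate shows that at most $\mathcal{H}^1(E)/\delta$ of its components can exceed diameter $\delta$; these ``thick'' components are therefore mutually isolated and clopen in $E \cap \overline{V}$, and by \Cref{lemm:rectifiablepaths} each is a Peano continuum. Combining this finiteness with the global connectedness of $E$ should rule out accumulation of small-diameter components at $y$ and force the component $D$ of $y$ in $E \cap \overline{V}$ to be a neighborhood of $y$ in $E$. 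Given such a $D$, the union $C \cup D$ is a continuum through $x_0$ and any $y' \in D$, so $D \subset A$, and $A$ is open.
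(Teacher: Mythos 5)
Your overall architecture is reasonable and in fact mirrors the paper's reduction: the paper also fixes a basepoint, exhausts $U$ by precompact open sets, takes the (path) component of the basepoint in each piece, and reduces everything to a local connectivity statement for $E$. The problem is that the two load-bearing steps of your clopen argument are exactly the ones left unproved or misstated. The main gap is openness of $A$. Your counting argument shows only that at most $\mathcal{H}^{1}(E)/\delta$ components of $E\cap\overline{V}$ have diameter larger than $\delta$; the assertion that these ``thick'' components are therefore \emph{clopen} in $E\cap\overline{V}$ is unjustified and false for a general compact set of finite $\mathcal{H}^{1}$-measure: infinitely many components of diameter at most $\delta$ may accumulate on the component of $y$, and nothing you have said excludes this. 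Excluding it is precisely the content of local connectedness and requires using the global connectedness of $E$ quantitatively, e.g.\ a boundary-bumping argument showing that every component of $E\cap\overline{B}(y,r)$ which meets $B(y,r/2)$ must reach $\partial B(y,r)$ (this itself needs proof when $E$ is closed but non-compact), hence has diameter at least $r/2$, hence there are only finitely many such components, and only then does the positive mutual distance of these finitely many compacta make the component of $y$ a relative neighbourhood of $y$ in $E$. You explicitly defer this step (``I expect to reduce\dots''), but it is the heart of the lemma; it is what the paper's $\epsilon$-chain argument with the mass lower bound $\mathcal{H}^{1}(K_{a,m,n})\geq 2^{-m-1}$, combined with \Cref{lemm:rectifiablepaths}, is designed to deliver.

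There is a second, smaller gap in the closedness step. Total boundedness of $E$ does not give you a Blaschke selection \emph{inside} $E$: the set $E$ is only closed in $U$, and $U$, being merely homeomorphic to $\mathbb{R}^2$, need not be complete, so $E$ need not be compact (think of $U$ an open Euclidean disk and $E$ a half-open radius). The Hausdorff limit of your witnesses $C_k$ is then only guaranteed to exist in the completion and may a priori contain points of $\overline{E}\setminus E$, in which case it is not a continuum in $E$ through $x_0$ and $y$. This is repairable -- for instance, once local connectedness at $y$ is available, closedness of $A$ is immediate: take the component $D$ of $y$ in $E\cap\overline{V}$, which is a continuum and a relative neighbourhood of $y$, pick $k$ with $y_k\in D$, and use $C_k\cup D$ -- but as written the step is not justified. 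So the proposal is a correct plan whose crucial estimate (local connectedness of a closed connected set of finite length, proved in the paper via $\epsilon$-chains and Peano-continuum theory) is still missing.
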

\begin{proof}
We may assume that $E$ is not a continuum, since otherwise the claim is trivial.

We claim that $E$ is locally path connected. That is, for every $y_1 \in E$ and every relatively open neighbourhood $W' \subset E$ of $y_1$, there exists a relatively open neighbourhood $y_1 \in W \subset W'$, where every $x, y \in W$ can be joined with a path within $W$.

We first show how the claim follows from this. To this end, fix $y_1 \in E$. Fix a sequence $( V_n )_{ n = 1 }^{ \infty }$ for which $y_1 \in V_1$, $\overline{V}_{n} \subset V_{n+1} \subset U$, $E \cap \overline{V}_n$ compact, $V_n$ open, and $U = \bigcup_{ n = 1 }^{ \infty } V_n$. By the local path connectivity and connectedness of $E$, every $y \in E$ is contained in some path $\gamma \colon [0,1] \rightarrow E$ with $\gamma(0) = y_1$ and $\gamma(1) = y$. For large enough $m$, we have that $V_m \supset |\gamma|$. This implies that if $K_n$ denotes the path connected component of $E \cap V_n$ containing $y_1$, we have $E = \bigcup_{ n = 1 }^{ \infty } K_n$. By setting $E_n \coloneqq \overline{K}_n$, we obtain a sequence of continua in $E$ for which $E = \bigcup_{ n = 1 }^{ \infty } E_n$.

So the claim follows after we verify the local path connectivity of $E$. One could prove the local path connectivity of $E$ by arguing as in the proof of \Cref{lemm:rectifiablepaths}, cf. \cite[Section 15]{Sem:96:PI}. However, we prefer to reduce the claim to \Cref{lemm:rectifiablepaths} for the sake of brevity.

We need the following definition during the proof. Fix $E' \subset E$, $\epsilon > 0$, and $( x, y ) \in E' \times E'$. We say that $( x_i )_{ i = 0 }^{ n }$ is an \emph{$\epsilon$-chain joining $x$ to $y$ in $E'$} if $x_0 = x$, $x_n = y$, $d( x_i, x_{i+1} ) < \epsilon$ and $x_i \in E'$ for each $i$.

We also use the following elementary subclaims during the proof, the proofs of which are left as a simple exercise:

Subclaim (A): If a set $E' \subset E$ is connected, then for every $( x, y, \epsilon ) \in E' \times E' \times ( 0, \infty )$, there exists an $\epsilon$-chain joining $x$ to $y$ in $E'$.

Subclaim (B): If $E'$ is compact and there exists an $\epsilon$-chain joining $x'$ to $y'$ in $E'$ whenever $( x', y', \epsilon ) \in E' \times E' \times (0,\infty)$, then $E'$ is connected.

We proceed with the proof. Consider now an arbitrary $y_0 \in E$ and $m \in \mathbb{N}$ for which $E \cap \overline{B}( y_0, 2^{3-m} )$ is a compact subset of $E$, the existence of which follows from the local compactness of $E$. Fix $z_0 \in E \setminus \overline{B}( y_0, 2^{3-m} )$.

For each $a \in E \cap B( y_0, 2^{-m } )$, let $K_{a,m,n}$ denote the collection of all $y \in E \cap \overline{B}( y_0, 2^{2-m} )$ for which there exists a $2^{-n} 2^{-m}$-chain joining $y$ to $a$ within $E \cap \overline{B}( y_0, 2^{2-m} )$. By connectivity of $E$ and Subclaim (A), there exists a $2^{-n} 2^{-m}$-chain $( w_i )_{ i = 0 }^{ l }$ joining $a$ to $z_0$. For each $k = 0, 1, \dots, 2^{2+n}$, the set $$\left\{ k 2^{-n} 2^{-m} \leq d( z, a ) < (k+1) 2^{-n} 2^{-m} \right\}$$ contains an element of the chain. Thus there are at least $2^{2+n}/2 = 2^{1+n}$ elements in $( w_i )_{ i = 1 }^{ l }$ that are also in $K_{a,m,n}$ and which satisfy $d( w_i,  w_j ) \geq 2^{-n} 2^{-m}$ whenever $i \neq j$. We denote these points by $( y_i )_{ i = 1 }^{ 2^{1+n} }$ in the forthcoming argument.

By considering the $1$-Lipschitz function $f(z) = d( y_i, z )$, the connectivity of $f( E )$ yields that
\begin{align*}
    \mathcal{H}^{1}\left( \overline{B}\left( y_i, 2^{-m-2-n} \right) \cap E \right)
    &\geq
    \mathcal{H}^{1}\left( f\left( \overline{B}\left( y_i, 2^{-m-2-n} \right) \cap E \right) \right)
    \\
    &=
    \diam f\left( \overline{B}\left( y_i, 2^{-m-2-n} \right) \cap E \right)
    =
    2^{-m-2-n}.
\end{align*}
As the balls $( \overline{B}( y_i, 2^{-m-2-n} ) )_{ i = 1 }^{ 2^{n+1} }$ are pairwise disjoint, we conclude
\begin{equation}\label{eq:masslowerbound}
    \infty
    >
    \mathcal{H}^{1}( \overline{B}( y_0, 2^{2-m} ) \cap E )
    \geq
    \mathcal{H}^{1}( K_{a,m,n} )
    \geq
    2^{-m-1}.
\end{equation}
As $K_{a,m,n+1} \subset K_{a,m,n}$ are compact, the set $K_{a,m} \coloneqq \bigcap_{ n \in \mathbb{N} } K_{a,m,n}$ is compact and satisfies
\begin{equation}\label{eq:masslowerbound:intersection}
    \mathcal{H}^{1}( K_{a,m} )
    =
    \lim_{ n \rightarrow \infty } \mathcal{H}^{1}( K_{a,m,n} )
    \geq
    2^{-m-1}.
\end{equation}
Furthermore, by combining \eqref{eq:masslowerbound} and \eqref{eq:masslowerbound:intersection}, we conclude that the collection $\left\{ K_{a,m} \right\}_{ a \in B( y_0, 2^{-m} ) \cap E }$ contains only a finite number of disjoint elements. In particular, there exists $\delta > 0$ such that whenever $K_{ y_0,m }$ and $K_{ a,m }$ are disjoint, then $d( K_{y_0,m}, K_{a,m} ) > \delta$.

Let $n_0 \in \mathbb{N}$ be such that $2^{-m-n_0} < \delta$. Then, whenever $n \geq n_0$ and $x, y \in K_{ y_0,m }$, every $2^{-m-n}$-chain joining $x$ to $y$ within $E \cap \overline{B}( y_0, 2^{2-m} )$ is contained in $K_{ y_0,m }$. Since we may pass to $n \rightarrow \infty$, the connectivity of $K_{ y_0,m }$ follows from Subclaim (B). Thus $K_{ y_0 ,m}$ is a continuum satisfying $\mathcal{H}^{1}( K_{y_0,m} ) < \infty$. Therefore $K_{ y_0,m }$ is a Peano continuum, as a consequence of \Cref{lemm:rectifiablepaths}. Then, for each $n > n_0$, there exists a relatively open and path connected set $W \subset K_{ y_0, m } \cap B( y_0, 2^{ - m - n } )$ containing $y_0$ \cite[Theorem 31.5]{Wil:70}. As $n > n_0$, $W$ is also relatively open in $E$. Since $y_0 \in E$ was arbitrary and $n$ can be made arbitrarily small, we have verified that $E$ is locally path connected. Thus the proof is complete after the subclaims are proved.
\end{proof}

\subsection{Level sets of Lipschitz functions}
In this section, we consider an open subset $U$ of a metric surface $X$ with $\mathcal{H}^{2}(U) < \infty$. We begin with the following formulation of \emph{Eilenberg's inequality}: 
\begin{lemma}[{\cite{Esm:Haj:21}}]
\label{lemm:eilenberg}
Let $f \colon U \rightarrow \mathbb{R}$ be $L$-Lipschitz. Then, for every Borel $g \colon U \rightarrow \left[0,\infty\right]$,
\begin{eqnarray*}
    \int_{ \mathbb{R} }^{*}
    \int_{ f^{-1}(s) }
        g
    \,d\mathcal{H}^{1}
    \,ds
    \leq
    L
    \frac{4}{\pi} \int_{U} g \,d\mathcal{H}^{2}. 
\end{eqnarray*}
\end{lemma}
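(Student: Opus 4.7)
The plan is first to reduce to the indicator-function case. Namely, once one establishes
\[
\int_{\mathbb{R}}^{*} \mathcal{H}^{1}( f^{-1}(s) \cap E ) \, ds \leq L \cdot \frac{4}{\pi} \mathcal{H}^{2}(E)
\]
for every Borel $E \subset U$, the general inequality for nonnegative Borel $g$ follows by approximating $g$ from below by simple nonnegative Borel functions and invoking the monotone convergence theorem for upper integrals recorded earlier in the preliminaries.

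To prove the indicator case I would fix $\delta, \varepsilon > 0$ and choose a countable cover $\{A_i\}$ of $E$ with $\diam A_i < \delta$ satisfying
\[
\sum_i (\diam A_i)^2 \leq \frac{4}{\pi} \mathcal{H}^{2}(E) + \varepsilon,
\]
which is feasible because the normalization $\alpha(2)/2^2 = \pi/4$ makes $\mathcal{H}^{2}$ equal to $\pi/4$ times the unnormalized Carath\'eodory construction. Since $f$ is $L$-Lipschitz, $\diam f(A_i) \leq L \diam A_i$, and whenever $A_i \cap f^{-1}(s)$ is nonempty its diameter is at most $\diam A_i < \delta$. Thus $\{A_i \cap f^{-1}(s)\}_i$ is a legitimate $\delta$-cover of $f^{-1}(s) \cap E$, and using $\alpha(1)/2 = 1$ we obtain the pointwise bound
\[
\mathcal{H}^{1}_{\delta}( f^{-1}(s) \cap E ) \leq \sum_i \chi_{ \overline{f(A_i)} }(s) \, \diam A_i.
\]

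Next I would integrate this inequality over $s \in \mathbb{R}$. The right-hand side is a Borel function of $s$ whose integral is computable by Tonelli, so monotonicity of the upper integral gives
\[
\int_{\mathbb{R}}^{*} \mathcal{H}^{1}_{\delta}( f^{-1}(s) \cap E ) \, ds \leq \sum_i \diam A_i \cdot \mathcal{L}^{1}( \overline{f(A_i)} ) \leq L \sum_i (\diam A_i)^2 \leq L \cdot \frac{4}{\pi} \mathcal{H}^{2}(E) + L \varepsilon.
\]
Letting $\delta \downarrow 0$, with $\mathcal{H}^{1}_{\delta} \uparrow \mathcal{H}^{1}$ and monotone convergence for upper integrals on the left, and then $\varepsilon \downarrow 0$ on the right, completes the indicator case.

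The main technical nuisance I anticipate is measurability: $s \mapsto \mathcal{H}^{1}( f^{-1}(s) \cap E )$ need not be Borel, which is precisely why the statement uses the upper integral $\int^{*}$. This is handled by committing to work with $\int^{*}$ throughout and dominating by Borel majorants at every step, exactly as above. A small related point is that $f(A_i)$ need not be Borel, but we used only its closure $\overline{f(A_i)}$ (or, alternatively, the fact that it is analytic and thus Lebesgue measurable). The passage from indicators to general Borel $g$ is then routine via simple-function approximation combined with the monotone convergence property of $\int^{*}$.
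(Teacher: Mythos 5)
Your argument is correct: the normalization bookkeeping ($\alpha(2)/2^2=\pi/4$, $\alpha(1)/2=1$), the Borel majorant $\sum_i\chi_{\overline{f(A_i)}}(s)\,\diam A_i$ for $\mathcal{H}^1_\delta(f^{-1}(s)\cap E)$, the bound $\mathcal{L}^1(\overline{f(A_i)})\leq L\,\diam A_i$, and the two limit passages via the monotone convergence property of the upper integral (with $\varepsilon$ fixed uniformly in $\delta$, then sent to zero) all go through, as does the reduction from indicators to general Borel $g$ by simple functions and subadditivity of $\int^{*}$. However, this is a genuinely different route from the paper, which does not prove \Cref{lemm:eilenberg} at all but quotes it from \cite{Esm:Haj:21}; the only related proof in the paper is \Cref{lemm:eilenberg:refined}, which imports from \cite{Esm:Haj:21} the Carath\'eodory-type coarea measure $\Phi^{1,1}(E,u)$ built from the two-diameter gauge $\diam(E_i)\,\diam(u(E_i))$, together with the inequalities $\int^{*}_{\mathbb{R}}\mathcal{H}^1(E\cap u^{-1}(s))\,ds\leq\Phi^{1,1}(E,u)$ and $\Phi^{1,1}(E,u|_E)\leq\frac{4}{\pi}\LIP(u|_E)\mathcal{H}^2(E)$, and then localizes via a Lusin--Egorov decomposition. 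Your covering argument is essentially the classical Eilenberg proof with the Lipschitz bound $\diam f(A_i)\leq L\,\diam A_i$ folded in from the start; it is self-contained and entirely sufficient for the lemma as stated. What the $\Phi^{1,1}$ machinery buys, and what your global-constant argument does not directly give, is the refined version with the pointwise quantity $\lip(u|_{E_i})$ on a Borel decomposition in place of the global constant $L$, which is what the paper actually needs later in \Cref{rem:PI} and \Cref{nonexample}.
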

\begin{remark}
When the integral on the right-hand side of the inequality in \Cref{lemm:eilenberg} is finite, the \emph{upper integral} $\int^{*}$ can be replaced by the usual integral since in that case
\begin{equation*}
    s \mapsto \int_{ f^{-1}(s) } g \,d\mathcal{H}^1
\end{equation*}
is Borel measurable. 
\end{remark}

\begin{lemma}\label{lemm:genericlevel}
Let $U \subset X$ be homeomorphic to $\mathbb{R}^2$ and $\mathcal{H}^{2}( U ) < \infty$. Fix a Lipschitz function $f \colon U \rightarrow \mathbb{R}$ and denote $I = f(U)$. Then for almost every $s \in I$:
\begin{enumerate}
    \item $\mathcal{H}^{1}( f^{-1}( s ) ) < \infty$ and each continuum $E \subset f^{-1}(s)$ is a simple Peano continuum;
\end{enumerate}
Furthermore, suppose that $N \subset U$ satisfies $\H^2(N)=0$ and $\Gamma_0$ is a $p$-negligible path family in $U$, for some $1 \leq p \leq \infty$. Then, for almost every $s \in I$,
\begin{enumerate}\setcounter{enumi}{1}
    \item For every continuum $E \subset f^{-1}(s)$, there exists a surjective Lipschitz path $\gamma \colon \left[0,1\right] \rightarrow E$ such that ${\#}( \gamma^{-1}(x) ) \leq 2$ for $\mathcal{H}^{1}$-almost every $x \in E$;
    \item For every absolutely continuous $\gamma \colon \left[a,b\right] \rightarrow f^{-1}(s)$, $\int_{ \gamma } \chi_{N} \,ds = 0$. Moreover, if there exists $M \in \mathbb{N}$ such that ${\#}( \gamma^{-1}(x) ) \leq M$ for $\mathcal{H}^{1}$-almost every $x \in X$, then $\gamma \not\in \Gamma_0$.
\end{enumerate}
\end{lemma}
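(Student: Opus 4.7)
The plan is to derive (1), (2), (3) sequentially, with \Cref{lemm:eilenberg} (Eilenberg's inequality) serving as the central quantitative input.

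For (1), applying \Cref{lemm:eilenberg} with $g \equiv 1$ gives
\begin{equation*}
    \int_{I}^{*} \mathcal{H}^{1}( f^{-1}(s) ) \, ds \leq \frac{4L}{\pi} \mathcal{H}^{2}(U) < \infty,
\end{equation*}
so $\mathcal{H}^{1}( f^{-1}(s) ) < \infty$ for almost every $s$. For such $s$, every continuum $E \subset f^{-1}(s)$ satisfies $\mathcal{H}^{1}(E) < \infty$ and is thus a Peano continuum by \Cref{lemm:rectifiablepaths}. To upgrade ``Peano'' to ``simple'', let $S \subset I$ be the set of levels containing a non-simple continuum and, for each $s \in S$, choose one such continuum $E_s$. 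Since the $E_s$'s lie in distinct level sets they are pairwise disjoint, and \Cref{lemm:simple} forces this collection to be countable; hence $|S| = 0$. For (2), on the full-measure set where (1) holds, every continuum $E \subset f^{-1}(s)$ is a simple Peano continuum with $\mathcal{H}^{1}(E) < \infty$, so \Cref{lemm:rectifiablepaths} yields a $1$-Lipschitz surjection of $[0, 2\mathcal{H}^{1}(E)]$ onto $E$ with multiplicity $\leq 2$; a linear rescaling produces the required Lipschitz path on $[0,1]$.

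For (3), the $N$-part follows by applying \Cref{lemm:eilenberg} to $g = \chi_{N}$: since $\mathcal{H}^{2}(N) = 0$, one has $\mathcal{H}^{1}( N \cap f^{-1}(s) ) = 0$ for almost every $s$. Then for any absolutely continuous $\gamma \colon [a,b] \to f^{-1}(s)$, the integrand $\#(\gamma^{-1}(x)) \chi_{N}(x)$ is supported on $\gamma([a,b]) \cap N \subset f^{-1}(s) \cap N$ and therefore vanishes $\mathcal{H}^{1}$-a.e., giving $\int_{\gamma} \chi_{N} \, ds = 0$. For the $\Gamma_{0}$-part, I invoke \Cref{lem:modzero} to fix a Borel $h \in L^{p}(X)$ with $\int_{\gamma} h \, ds = \infty$ for every $\gamma \in \Gamma_{0}$. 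Since $\mathcal{H}^{2}(U) < \infty$, H\"older's inequality gives $h \in L^{1}(U)$, and applying \Cref{lemm:eilenberg} to $g = h$ yields $\int_{f^{-1}(s)} h \, d\mathcal{H}^{1} < \infty$ for almost every $s$. For such $s$ and any absolutely continuous $\gamma \colon [a,b] \to f^{-1}(s)$ with $\#(\gamma^{-1}) \leq M$ outside an $\mathcal{H}^{1}$-null set,
\begin{equation*}
    \int_{\gamma} h \, ds
    =
    \int_{X}^{*} \#(\gamma^{-1}(x)) h(x) \, d\mathcal{H}^{1}
    \leq
    M \int_{f^{-1}(s)} h \, d\mathcal{H}^{1}
    <
    \infty,
\end{equation*}
so $\gamma \notin \Gamma_{0}$.

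The main obstacle I expect is the bookkeeping in (1) when passing from ``Peano'' to ``simple'': the key observation is that choosing a single non-simple continuum per bad level automatically produces a pairwise disjoint family across levels, to which \Cref{lemm:simple} (Moore's theorem) applies. The remaining steps reduce to direct applications of Eilenberg's inequality combined with the multiplicity bound and \Cref{lem:modzero}.
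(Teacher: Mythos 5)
Your proposal is correct and follows essentially the same route as the paper's proof: \Cref{lemm:eilenberg} with $g=\chi_U$ for the a.e.\ finiteness, \Cref{lemm:rectifiablepaths} for the Peano parametrizations in (1) and (2), \Cref{lemm:simple} (Moore's theorem) applied to one chosen non-simple continuum per bad level to exclude all but countably many levels, and the characterization of $p$-negligibility plus Eilenberg applied to $h$ for (3). The only point where the paper is slightly more careful is that $N$ need not be Borel, so before applying \Cref{lemm:eilenberg} to $\chi_N$ one should pass to a Borel set $\widetilde{N}\supset N$ with $\mathcal{H}^2(\widetilde{N})=0$; this one-line adjustment does not change your argument.
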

\begin{proof}
Applying \Cref{lemm:eilenberg} to $g = \chi_{U}$ implies that $\mathcal{H}^{1}( f^{-1}( s ) ) < \infty$ for almost every $s$. By \Cref{lemm:rectifiablepaths}, each compact set $K \subset f^{-1}(s) \cap U$ is a Peano continuum. \Cref{lemm:simple} implies that except for a countable set of values $s$, all continua $E \subset f^{-1}(s)$ are simple. This proves (1). Claim (2) also follows from \Cref{lemm:rectifiablepaths}.

Next, we establish (3). Since $\Gamma_0$ has negligible $p$-modulus, there exists an $L^p(U)$-integrable Borel function $h \colon U \rightarrow \left[0,\infty\right]$ such that $\infty = \int_{\gamma} h \,ds$ for every $\gamma \in \Gamma_0$.

Notice that $h$ is also in $L^1(U)$. So, by Eilenberg's inequality, $\int_{ f^{-1}(t) } h \,d\mathcal{H}^{1} < \infty$ for almost every $t$. Hence, if $\gamma \colon \left[a,b\right] \rightarrow f^{-1}(t)$ satisfies ${\#}( \gamma^{-1}(x) ) \leq M$ for $\mathcal{H}^{1}$-almost every $x \in E$, then
\begin{equation*}
    \int_{ \gamma } h \,ds \leq M \int_{ f^{-1}(t) } h \,d\mathcal{H}^{1} < \infty.
\end{equation*}
Thus $\gamma \not\in \Gamma_0$.

To prove the remaining claim, first fix a Borel set $\tilde{N} \supset N$ such that $\mathcal{H}^2( \tilde{N} ) = 0$. Observe that, by the Eilenberg inequality,
\begin{equation*}
    0
    =
    \int_{ f^{-1}(t) } \chi_{ \widetilde{N} } \,d\mathcal{H}^{1}
    =
    \mathcal{H}^1( f^{-1}(t) \cap \widetilde{N} )
    \quad\text{for almost every $t$.}
\end{equation*}
In particular, $\mathcal{H}^{1}( f^{-1}(t) \cap N ) = 0$ for almost every $t$. For every such $t$, for every path $\gamma \colon [a,b] \rightarrow f^{-1}(t)$, $\int_{ \gamma } \chi_{N} \,ds = 0$.
\end{proof}


\section{Coarea inequality and continuity for monotone functions}
In this section we establish the coarea inequality (\Cref{co-inq-mono}), with the non-sharp constant $\kappa$, for weakly monotone functions (Definition~\ref{defi:monotone}) with $p$-integrable upper gradients, and prove their continuity (\Cref{intro:thm:continuity}) when $p\geq 2$.

Throughout this section, we fix a metric surface $X$ and a metric surface $U \subset X$ homeomorphic to $\mathbb{R}^2$ and satisfying $\mathcal{H}^{2}( U ) < \infty$. We also fix a weakly monotone function $u \colon U \rightarrow \mathbb{R}$. We moreover assume that $u$ has a $p$-weak upper gradient $\rho \in L^{p}( U )$. We do not impose restrictions on $1 \leq p \leq \infty$ unless stated otherwise. Note that $L^{p}( U ) \subset L^{1}( U )$ since $\mathcal{H}^{2}( U ) < \infty$. 

\subsection{Initial bounds on oscillations}
\begin{proposition}\label{lemm:levelsetwise}
Fix a compact $E \subset U$ and let $f(z) := d( z,E )$. Fix $\epsilon_E > 0$ such that $f^{-1}( [0, \epsilon_E] )$ is compactly contained in $U$. Then, denoting $E_{r} := f^{-1}( [0,r] )$, we have 
\begin{equation*}
    2 \mathcal{H}^{1}( u( E_r ) )
    \leq
    \int_{ f^{-1}(r) } \rho \,d\mathcal{H}^{1}
    \quad\text{for almost every $0 < r < \epsilon_E$}.
\end{equation*}
Moreover, if $0 < r < \epsilon_E$ and $E'$ is a continuum with $E' \subset E_{r}$, then 
\begin{equation}\label{eq:oscillation:pre}
    2 \diam u( E' )
    \leq
    \int_{ f^{-1}(s) } \rho \,d\mathcal{H}^{1}
    \quad\text{for almost every $r<s<\epsilon_E$}.
\end{equation}
\end{proposition}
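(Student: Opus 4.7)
The strategy common to both inequalities is to locate a Jordan curve $J$ in the appropriate level set of $f$ that encloses the target continuum, apply weak monotonicity on the bounded Jordan domain of $J$ to bound the range of $u$ on the enclosed region by that of $u|_J$, and estimate $\osc_J u \leq \tfrac{1}{2}\int_J \rho\,d\mathcal{H}^{1}$ by invoking the upper gradient inequality on each of the two arcs of $J$ between any given pair of points and selecting the arc with smaller $\rho$-integral.

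Because $f$ is $1$-Lipschitz and $\mathcal{H}^{2}(U) < \infty$, \Cref{lemm:genericlevel} applied to $f$ with $\Gamma_0$ taken to be the $p$-negligible exceptional family of the upper gradient $\rho$ produces a full-measure set $R \subset (0,\epsilon_E)$ of levels $s$ for which $\mathcal{H}^{1}(f^{-1}(s)) < \infty$, every continuum in $f^{-1}(s)$ is a simple Peano continuum, and every absolutely continuous path in $f^{-1}(s)$ with uniformly bounded multiplicity satisfies the upper gradient inequality for $(u,\rho)$. For any Jordan curve $J \subset f^{-1}(s)$ with $s \in R$, the arclength Lipschitz parametrization from the lemma following \Cref{lemm:simple} is injective outside its endpoints, so each pair of points $p, q \in J$ is joined by two arcs on which the upper gradient inequality holds; choosing the arc with smaller $\rho$-integral gives $\osc_J u \leq \tfrac{1}{2}\int_J \rho\,d\mathcal{H}^{1}$.

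For the second (``moreover'') claim, fix $s \in R \cap (r,\epsilon_E)$ and a continuum $E' \subset E_r$. Since $r < s$, $E' \subset \{f<s\}$; choose $y_\infty \in U \setminus E_s$ (it exists because $E_s$ is compactly contained in $U \cong \mathbb{R}^{2}$). The compact set $f^{-1}(s)$ separates $E'$ from $y_\infty$, so \Cref{lemm:separation} yields a continuum $C \subset f^{-1}(s)$ separating some point of $E'$ from $y_\infty$; as $C$ is a simple Peano continuum and neither points nor arcs separate a planar region, $C$ is a Jordan curve. Connectedness of $E'$ and $E' \cap C = \emptyset$ force $E'$ into the bounded Jordan domain $W_C$, so weak monotonicity on $W_C$ gives $u(E') \subset [\inf_C u, \sup_C u]$, whence $2\diam u(E') \leq 2\osc_C u \leq \int_C \rho\,d\mathcal{H}^{1} \leq \int_{f^{-1}(s)} \rho\,d\mathcal{H}^{1}$.

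For the first claim, fix $r \in R$. Let $\Omega_\infty$ be the non-precompact component of $U \setminus E_r$ (unique by the planar structure of $U$) and form the filled-in compact set $\widetilde E_r := U \setminus \Omega_\infty$. Each component $\widetilde E_r^{(j)}$ of $\widetilde E_r$ contains an open neighborhood of some $e \in E$, so by separability of $U$ the family $\{\widetilde E_r^{(j)}\}$ is at most countable; moreover $U \setminus \widetilde E_r^{(j)}$ is connected because $\Omega_\infty$ is connected and every other $\widetilde E_r^{(j')}$ attaches to $\overline{\Omega_\infty}$ through its boundary in $f^{-1}(r)$. Applying \Cref{lemm:separation} to $x \in \interior \widetilde E_r^{(j)}$ and $y \in \Omega_\infty$, separated by $\partial \widetilde E_r^{(j)} \subset f^{-1}(r)$, and using the simple-Peano property yields a Jordan curve $J_j \subset f^{-1}(r)$ separating $x$ and $y$; a clopen argument on the connected set $\Omega_\infty(J_j)$ (using that $U \setminus \widetilde E_r^{(j)}$ is connected, contains $y$, and is disjoint from $J_j$) then forces $\widetilde E_r^{(j)} = \overline{W_{J_j}}$. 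Weak monotonicity on $W_{J_j}$ gives $u(\widetilde E_r^{(j)}) \subset [\inf_{J_j} u, \sup_{J_j} u]$, and since the $\widetilde E_r^{(j)}$ are pairwise disjoint so are the Jordan curves $J_j \subset f^{-1}(r)$; summing the previous paragraph's estimate over $j$ yields
\[
\mathcal{H}^{1}(u(E_r)) \leq \sum_j \osc_{J_j} u \leq \tfrac{1}{2}\sum_j \int_{J_j} \rho\,d\mathcal{H}^{1} \leq \tfrac{1}{2}\int_{f^{-1}(r)} \rho\,d\mathcal{H}^{1}.
\]
The main obstacle lies in this last step: upgrading the output of \Cref{lemm:separation} from a merely separating Jordan curve to the identity $\widetilde E_r^{(j)} = \overline{W_{J_j}}$, so that weak monotonicity is actually applicable to cover $\widetilde E_r^{(j)}$ (and hence $E_r$) by the Jordan domains of pairwise disjoint Jordan curves in $f^{-1}(r)$.
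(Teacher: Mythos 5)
Your treatment of the ``moreover'' part, and the general mechanism you use throughout (generic levels of the $1$-Lipschitz $f$ via \Cref{lemm:genericlevel}, a Jordan curve $J$ in a generic level set, weak monotonicity on its Jordan domain, and the two-arc splitting giving $\osc_J u \leq \tfrac12\int_J \rho\,d\mathcal{H}^1$) is essentially the paper's argument. One correction there: choosing $y_\infty$ as an \emph{arbitrary} point of $U\setminus E_s$ is not enough. If, say, $E$ is a circle and $y_\infty$ is its center, the separating continuum $C\subset f^{-1}(s)$ produced by \Cref{lemm:separation} is the inner level curve, and $E'$ then lies in the \emph{non-precompact} component of $U\setminus C$, where Definition~\ref{defi:monotone} gives no information. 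You must take $y_\infty$ in the non-precompact component of $U\setminus f^{-1}([0,\epsilon_E])$ (then, since $C\subset E_s$, that component lies outside $C$, forcing $E'\subset W_C$); this is a fixable oversight, but as written the step ``connectedness forces $E'$ into the bounded Jordan domain'' is false.

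The genuine gap is in the first claim, and it is exactly the step you flag: identifying the components of the filled set $\widetilde E_r$ with closed Jordan domains bounded by countably many disjoint Jordan curves in $f^{-1}(r)$. For components with nonempty interior this can in fact be salvaged (for a.e.\ $r$ all continua in $f^{-1}(r)$ are simple, $\partial \widetilde E_r^{(j)}\subset f^{-1}(r)$, and a unicoherence argument shows the boundary is a single Jordan curve whose closed inside is the component), but the real obstruction is the components of $\widetilde E_r$ with \emph{empty interior}: every point of $\widetilde E_r$ with $f<r$ or lying in a filled hole is interior, so such components are subsets of $f^{-1}(r)$; they need not meet $E$ at all (the metric on $U$ is only the restriction of the ambient metric, so there is no segment from a point of $E_r$ to a nearest point of $E$ inside $E_r$ — think of a surface grazing the distance sphere $\{f=r\}$), so your countability argument via ``each component contains a neighbourhood of some $e\in E$'' fails, there may be uncountably many of them, they are enclosed by none of your curves $J_j$, and on them there may be no nonconstant rectifiable curves whatsoever, so $u$ cannot be controlled there by $\rho$ (this is precisely the Cantor-set phenomenon of \Cref{sec:Lip:counter}). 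Hence the covering $u(E_r)\subset\bigcup_j u(\overline{W_{J_j}})$ does not hold in general and the level-$r$ argument cannot be completed as proposed. The paper avoids this entirely: in \Cref{lemm:nicebasis} the compact set $E_r$ is covered by finitely many components of $\{f<r'\}$ with $r<r'<s$, and \Cref{lemm:separation} together with simplicity of the components of a \emph{generic level $s>r$} produces finitely many pairwise disjoint Jordan curves in $f^{-1}(s)$ whose Jordan domains contain all of $E_r$; this gives $2\mathcal{H}^1(u(E_r))\leq\int_{f^{-1}(s)}\rho\,d\mathcal{H}^1$ for a.e.\ $s\in(r,\epsilon_E)$, and the inequality at level $r$ itself is then recovered from the monotonicity of $r\mapsto\mathcal{H}^1(u(E_r))$, which is continuous outside a countable set. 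That device — work at a slightly larger level, then pass back using monotonicity in $r$ — is the missing idea in your proposal.
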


\begin{corollary}\label{lemm:Hausdorff}
For every $x_0 \in U$ and every $r>0$ for which $\overline{B}( x_0, 2r )$ is compact in $U$, the function $f(z) = d( z,x_0)$ satisfies
\begin{equation*}
    2r \mathcal{H}^{1}( u( \overline{B}(x_0,r) ) )
    \leq
    \int_{ r }^{ 2r }
    \int_{ f^{-1}(s) } \rho \,d\mathcal{H}^{1}
    \,ds
    \leq
    \frac{4}{\pi} \int_{ B( x_0, 2r) } \rho \,d\mathcal{H}^{2}.
\end{equation*}
\end{corollary}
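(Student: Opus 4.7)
The plan is to derive both inequalities as direct consequences of Proposition \ref{lemm:levelsetwise} (applied to the one-point set $E = \{x_0\}$) combined with Eilenberg's inequality (Lemma \ref{lemm:eilenberg}). With this choice of $E$, the distance function becomes $f(z) = d(z, \{x_0\}) = d(z, x_0)$, and the sublevel sets are $E_s = f^{-1}([0,s]) = \overline{B}(x_0, s)$. Since $\overline{B}(x_0, 2r)$ is assumed to be compact in $U$, we may take $\epsilon_E = 2r$ in the proposition.

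For the lower bound, the proposition gives
\[ 2\mathcal{H}^1(u(\overline{B}(x_0, s))) \leq \int_{f^{-1}(s)} \rho \, d\mathcal{H}^1 \quad\text{for a.e. } s \in (0, 2r). \]
Since $r \leq s$ implies $\overline{B}(x_0, r) \subset \overline{B}(x_0, s)$ and hence $u(\overline{B}(x_0,r)) \subset u(\overline{B}(x_0,s))$, I may replace the integrand on the left by the (larger) $s$-independent quantity $2\mathcal{H}^1(u(\overline{B}(x_0, r)))$. Integrating over $s \in [r, 2r]$ yields
\[ 2r \, \mathcal{H}^1(u(\overline{B}(x_0, r))) \leq \int_{r}^{2r} \int_{f^{-1}(s)} \rho \, d\mathcal{H}^1 \, ds, \]
which is the first inequality.

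For the upper bound, I apply Lemma \ref{lemm:eilenberg} to the $1$-Lipschitz function $f(z) = d(z, x_0)$ together with the Borel integrand $g = \rho \cdot \chi_{B(x_0, 2r)}$. Since $f^{-1}(s) \subset B(x_0, 2r)$ for every $s < 2r$ and $f^{-1}(s) \cap B(x_0, 2r) = \emptyset$ for $s \geq 2r$, the Eilenberg inequality becomes
\[ \int_{r}^{2r} \int_{f^{-1}(s)} \rho \, d\mathcal{H}^1 \, ds \leq \int_{\mathbb{R}}^{*} \int_{f^{-1}(s)} g \, d\mathcal{H}^1 \, ds \leq \frac{4}{\pi}\int_{B(x_0,2r)} \rho \, d\mathcal{H}^2, \]
completing the proof after concatenation with the first inequality. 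There is no serious obstacle here; the only points to verify are that $\epsilon_E = 2r$ is a legitimate choice (guaranteed by the compactness hypothesis on $\overline{B}(x_0, 2r)$) and that the finiteness of $\int_U \rho \, d\mathcal{H}^2$ (which follows from $\rho \in L^p(U)$ and $\mathcal{H}^2(U) < \infty$) justifies replacing the upper integral by the ordinary integral via the remark following Lemma \ref{lemm:eilenberg}.
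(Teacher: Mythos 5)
Your proposal is correct and follows essentially the same route as the paper: apply Proposition \ref{lemm:levelsetwise} with $E=\{x_0\}$ (so $E_s=\overline{B}(x_0,s)$ and $\epsilon_E=2r$), use the monotonicity of $s\mapsto\mathcal{H}^1(u(\overline{B}(x_0,s)))$ to integrate over $(r,2r)$, and finish with Eilenberg's inequality for the $1$-Lipschitz function $f$. The only difference is that you spell out the choice $g=\rho\,\chi_{B(x_0,2r)}$ in the Eilenberg step, which the paper leaves implicit.
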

\begin{proof}
We apply \Cref{lemm:levelsetwise} for $E= \left\{ x_0 \right\}$. Then $E_s = \overline{B}( x_0, s )$ for each $0 < s < 2r$. Moreover, if $r < s < 2r$, then $\mathcal{H}^{1}( u( E_r ) ) \leq \mathcal{H}^{1}( u( E_s ) )$. Thus
\begin{equation*}
    2r \mathcal{H}^{1}( u( E_r ) )
    =\int_{ r }^{ 2r }
    2 \mathcal{H}^{1}( u(E_r) )
    \,ds \leq
    \int_{ r }^{ 2r }
    2 \mathcal{H}^{1}( u(E_s) )
    \,ds
    \leq
    \int_{ r }^{ 2r }
    \int_{ f^{-1}( s ) }
        \rho
    \,d\mathcal{H}^{1}\,ds.
\end{equation*}
The proof is completed once we apply the Eilenberg inequality.
\end{proof}
We apply the following lemma during the proof of \Cref{lemm:levelsetwise}.
\begin{lemma}\label{lemm:nicebasis}
Let $N \subset U$ be $\mathcal{H}^2$-negligible and $\Gamma_0$ be a collection of paths having negligible $p$-modulus.  Let $E \subset U$ be compact and recall the notation (and choice of $\epsilon_E>0$) from Proposition~\ref{lemm:levelsetwise}. Then for almost every $s \in ( r, \epsilon_E )$, there exists a finite collection of Jordan domains $( W_{i} )_{ i = 1 }^{ N }$, compactly contained in $U$, that satisfy the following:
\begin{enumerate}
    \item $E_r \subset \bigcup_{ i = 1 }^{ N } W_i$;
    \item $\partial W_i \subset f^{-1}(s)$ for every $i$ and $( \overline{ W }_i )_{i=1}^{N}$ are pairwise disjoint;
    \item for every continuum $C \subset \bigcup_{ i = 1 }^{ N } \partial W_i$, there exists a surjective Lipschitz parametrization $\gamma \colon \left[0,1\right] \rightarrow C$, injective except possibly for $\gamma(0)=\gamma(1)$, such that $\gamma \not\in \Gamma_0$ and $\int_{ \gamma } \chi_{N} \,ds = 0$.
\end{enumerate}
\end{lemma}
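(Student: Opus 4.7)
The argument is essentially topological: I apply Lemma~\ref{lemm:genericlevel} to the $1$-Lipschitz distance function $f(z) = d(z,E)$ to obtain a large set of ``good'' levels, and then use planar topology to carve out the required Jordan domains from components of $f^{-1}(s)$.

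Applying Lemma~\ref{lemm:genericlevel} to $f$ with the given $N$ and $\Gamma_0$ furnishes a full-measure set of $s \in (r,\epsilon_E)$ at which the following hold: (i) $\mathcal{H}^1(f^{-1}(s)) < \infty$ and every continuum in $f^{-1}(s)$ is a simple Peano continuum; (ii) every continuum $C \subset f^{-1}(s)$ admits a surjective Lipschitz parametrization of multiplicity at most $2$; and (iii) every absolutely continuous path in $f^{-1}(s)$ of bounded multiplicity lies outside $\Gamma_0$ and satisfies $\int_\gamma \chi_N\,ds = 0$. Fix such an $s$. Since $U \cong \mathbb{R}^2$ is non-compact while $E_s$ is compact, I fix a point $y_0$ in the unbounded component $V_\infty$ of $U \setminus E_s$.

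For each $x \in E_r$, the intermediate value theorem applied to $f$ along any path from $x$ to $y_0$ shows that $f^{-1}(s)$ separates them. Lemma~\ref{lemm:separation} then yields a component $K_x$ of $f^{-1}(s)$ separating $x$ from $y_0$. By (i), $K_x$ is a simple Peano continuum, and since neither points nor arcs separate $\mathbb{R}^2$, $K_x$ must be a Jordan curve. Because $V_\infty \subset U \setminus K_x$ is connected and unbounded, it lies in the unbounded component of $U \setminus K_x$, so $x$ lies in the bounded component, i.e., the interior of $K_x$. Compactness of $E_r$ then yields finitely many Jordan curves $J_1,\dots,J_N \subset f^{-1}(s)$ whose interiors cover $E_r$. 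These curves are pairwise disjoint (distinct components of the closed set $f^{-1}(s)$), and for two disjoint Jordan curves in $\mathbb{R}^2$, one lies in the interior of the other or the two lie in each other's exteriors. Selecting the outermost $J_{i_1},\dots,J_{i_M}$ (maximal under ``contained in the interior of'') and letting $W_k$ denote the interior of $J_{i_k}$ yields Jordan domains compactly contained in $U$ with $\partial W_k = J_{i_k} \subset f^{-1}(s)$; distinct outermost curves lie in each other's exteriors, so the closures $\overline{W_k}$ are pairwise disjoint; and every $J_j$ is contained in the interior of some $J_{i_k}$, so $E_r \subset \bigcup_k W_k$. This establishes (1) and (2).

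For (3), a continuum $C \subset \bigcup_k J_{i_k}$ must lie in a single $J_{i_k}$ (the $J_{i_k}$ are pairwise disjoint closed sets), and is therefore a subcontinuum of a Jordan curve---itself a simple Peano continuum with $\mathcal{H}^1(C) < \infty$. The lemma immediately following Lemma~\ref{lemm:simple} then supplies a surjective Lipschitz $\gamma \colon [0,1] \to C$ injective outside its endpoints, hence of multiplicity at most $2$; applying (iii) to $\gamma$ gives $\gamma \notin \Gamma_0$ and $\int_\gamma \chi_N\,ds = 0$. The main obstacle is the planar-topology bookkeeping---showing each $x \in E_r$ sits inside some Jordan curve component of $f^{-1}(s)$ and then passing to pairwise disjoint outermost curves---which uses the Jordan curve theorem, the Jordan arc non-separation theorem, and the hypothesis $U \cong \mathbb{R}^2$ in an essential way.
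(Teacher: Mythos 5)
Your proof is correct and takes essentially the same route as the paper's: apply \Cref{lemm:genericlevel} to the distance function to get generic levels, use \Cref{lemm:separation} to find a separating component of $f^{-1}(s)$, conclude from simplicity that it is a Jordan curve, take the bounded Jordan domains, and deduce (3) from parts (2)--(3) of \Cref{lemm:genericlevel}. The only notable difference is cosmetic: you argue pointwise on $E_r$ with a compactness extraction and select the outermost curves explicitly, which in fact treats the possible nesting of the Jordan regions a bit more carefully than the paper's brief removal of duplicate indices.
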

\begin{proof}[Proof of \Cref{lemm:nicebasis}]
Fix $x_1 \in U \setminus f^{-1}( [0,\epsilon_E] )$ and $0 < r < r' < s < \epsilon_E$ for now. Observe that there is a finite number of components $( V_{i} )_{ i = 1 }^{ N }$ of $f^{-1}( [0,r') )$ covering $E_r$. Recalling \Cref{lemm:separation}, for every $V_{i}$, there exists a connected component $F_{i} \subset f^{-1}( s )$ separating $V_{i}$ from $x_1$. For almost every $r' < s < \epsilon_E$, each connected component of $f^{-1}(s)$ is a simple Peano continuum. On the other hand, \Cref{lemm:Jordan} guarantees that $F_{i}$ contains a continuum homeomorphic to $\mathbb{S}^{1}$. Given that $F_{i}$ is simple, this happens only if $F_{i}$ itself is that subset.

Let $K_{i} \subset U$ denote the unique set homeomorphic to $[0,1]^2$ satisfying $\partial K_{i} = F_{i}$. We denote the interior of $K_i$ by $W_i$. By construction, $x_1 \in U \setminus K_{i}$. Observe that if $F_{i} \cap F_{j} \neq \emptyset$, then necessarily $K_{i} = K_{j}$. Indeed, if $F_{i} \cap F_{j} \neq \emptyset$, then $F_{i} = F_{j}$ must hold since $F_{i} \cup F_{j}$ is connected and the connected components of $f^{-1}(s)$ are simple. Next, observe that an arbitrary compactly contained Jordan domain $V \subset U$ is the unique component of $U \setminus \partial V$ homeomorphic to the plane. Hence $K_{i} = K_{j}$. With this observation, by removing possible duplicate indices, we may assume that $( K_{i} )_{i=1}^{M}$ are pairwise disjoint. We have now verified requirements (1) and (2). Towards (3), we simply need to apply (2) and (3) from \Cref{lemm:genericlevel}.
\end{proof}

\begin{proof}[Proof of \Cref{lemm:levelsetwise}]
Let $\Gamma_0$ be the $p$-negligible family identified in \Cref{lemm:badpaths}. By Lemmas \ref{lemm:genericlevel} and \ref{lemm:nicebasis} (applied with $N = \emptyset$ and $\Gamma_0$), for almost every $s \in (r,\epsilon_E)$, the upper gradient inequality holds for all absolutely continuous paths $\gamma \colon [a,b] \rightarrow f^{-1}(s)$ with ($\mathcal{H}^1$-essentially) bounded multiplicity. We may also assume that $\int_{ f^{-1}(s) } \rho \,d\mathcal{H}^{1} < \infty$.

Let $W_i$ be the Jordan domains given by Lemma~\ref{lemm:nicebasis}. Observe that $u( W_{i} )$ is contained in an interval $I_{i}$ of length $\diam u(W_{i})$, so 
\begin{equation}
\label{ju1}
\mathcal{H}^{1}( u(W_{i}) ) \leq \mathcal{H}^{1}( I_{i} ) = \diam( u(W_{i}) ).
\end{equation}
By the weak monotonicity of $u$ (Definition~\ref{defi:monotone}), we have
\begin{equation}
\label{ju2}
\diam u( W_{i} ) \leq \sup_{ x, y \in \partial W_{i} } | u(y) - u(x) |.
\end{equation}
Observe that $\partial W_i$ admits a Lipschitz parametrization $\gamma \colon [0,1] \rightarrow \partial W_i$ injective outside the end points. As $\gamma \not\in \Gamma_0$, the composition $u \circ \gamma$ is absolutely continuous. Therefore, there exists a pair of points $x_0$ and $y_0$ on $\partial W_i$ such that  $$\sup_{ x, y \in \partial W_{i} } | u(y) - u(x) |=| u(y_0) - u(x_0) |.$$ Since $\partial W_{i}$ can be separated into two subarcs $\gamma_1, \gamma_2$, overlapping only at $x_0$ and $y_0$, we can choose the one that yields 
\begin{equation}
    \label{halfbnd}
\sup_{ x, y \in \partial W_{i} } | u(y) - u(x) | = | u(y_0) - u(x_0) | \leq  \int_{\gamma_j} \rho \, ds \leq \frac{1}{2} \int_{\partial W_{i}} \rho \, ds \, ,
\end{equation}
where we applied the upper gradient inequality.
Combining \eqref{ju1}, \eqref{ju2}, and \eqref{halfbnd} and using subadditivity of the Hausdorff measure we obtain
\begin{equation*}
    2\mathcal{H}^{1}( u(E_r) )
    \leq
    2\sum_{ i = 1 }^{ N } \mathcal{H}^{1}( u( W_{i} ) )
    \leq
    \sum_{ i = 1 }^{ N } \int_{\partial W_{i}} \rho \, ds
    \leq
   \int_{ f^{-1}(s) } \rho \,d\mathcal{H}^1 < \infty.
\end{equation*}
The last inequality uses the fact that the boundaries of the Jordan domains $W_i$ are pairwise disjoint.

To continue, for each $0 < r' < \epsilon_E$, $[0, r'] \ni r \mapsto 2\mathcal{H}^{1}( u(E_{r}) )$ is nondecreasing and bounded. Thus, outside a countable family of $s \in [0, \epsilon_E)$, $r \mapsto 2\mathcal{H}^{1}( u(E_{r}) )$ is continuous at $s$. In particular,
\begin{equation*}
    2 \mathcal{H}^{1}( u(E_s) )
    \leq
    \int_{ f^{-1}(s) } \rho \,d\mathcal{H}^1
    \quad\text{for almost every $s$}.
\end{equation*}
Finally, in case $E'$ a continuum in $E_r$, there exists a single $i$ for which $E' \subset W_i$. Then a similar argument as above shows \eqref{eq:oscillation:pre}.
\end{proof}

The following result is key in establishing initial topological properties of $u$.
\begin{corollary}\label{lemm:genericoscillation}
Fix $x_0 \in U$ and $r>0$ for which $\overline{B}( x_0, 2r )$ is compact in $U$. Let $V$ be the connected component of $B( x_0, r )$ containing $x_0$. Then
\begin{equation}\label{eq:oscillation}
    2r \sup_{ x, y \in V } | u(y) - u(x) |
    \leq
    \frac{4}{\pi} \int_{ B( x_0, 2r) } \rho \,d\mathcal{H}^{2}.
\end{equation}
\end{corollary}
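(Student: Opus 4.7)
The plan is to apply \Cref{lemm:levelsetwise} exactly as it was used to prove \Cref{lemm:Hausdorff}, but replacing the trivial continuum $\{x_0\}$ by the (generally much larger) continuum $\overline{V}$. Specifically, I would take $E = \{x_0\}$, $f(z) = d(z, x_0)$, and $\epsilon_E = 2r$, which is permitted since $\overline{B}(x_0, 2r)$ is compact in $U$. For this choice, $E_s = \overline{B}(x_0, s)$ for every $s \in [0, 2r]$. Since $V$ is connected, its closure $\overline{V}$ is connected; and since $V \subset B(x_0, r) \subset \overline{B}(x_0, r)$ which is compact, $\overline{V}$ is a continuum contained in $E_r$.

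Next I would apply the oscillation estimate \eqref{eq:oscillation:pre} to the continuum $E' = \overline{V}$. This gives
\begin{equation*}
    2 \diam u( \overline{V} ) \leq \int_{ f^{-1}(s) } \rho \,d\mathcal{H}^{1} \quad \text{for almost every } s \in (r, 2r).
\end{equation*}
Integrating this inequality over $s \in (r, 2r)$ (the left-hand side is independent of $s$ and the interval has length $r$), I obtain
\begin{equation*}
    2r \diam u( \overline{V} ) \leq \int_{r}^{2r} \int_{ f^{-1}(s) } \rho \,d\mathcal{H}^{1} \,ds.
\end{equation*}
Then, just as in the proof of \Cref{lemm:Hausdorff}, I would bound the right-hand side via Eilenberg's inequality (\Cref{lemm:eilenberg}) applied to the $1$-Lipschitz function $f$ and the integrand $\rho \chi_{B(x_0, 2r)}$, noting that $f^{-1}(s) \subset B(x_0, 2r)$ for $s < 2r$, to conclude that
\begin{equation*}
    \int_{r}^{2r} \int_{ f^{-1}(s) } \rho \,d\mathcal{H}^{1} \,ds \leq \frac{4}{\pi} \int_{ B(x_0, 2r) } \rho \,d\mathcal{H}^{2}.
\end{equation*}

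The final step is to observe that $\sup_{x, y \in V} |u(y) - u(x)| = \diam u(V)$, and since $u(V) \subset u(\overline{V})$, this diameter is bounded above by $\diam u(\overline{V})$. Chaining the three displayed inequalities yields \eqref{eq:oscillation}. There is no real obstacle here: the result is a direct corollary of \Cref{lemm:levelsetwise}. The only mild subtlety worth flagging is that $u$ is not yet known to be continuous, so one should be careful not to identify $u(\overline{V})$ with the closure of $u(V)$; the elementary set-inclusion $u(V) \subset u(\overline{V})$ is all that is needed, and it holds for any function.
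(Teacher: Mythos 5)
Your proof is correct and follows essentially the same route as the paper: apply \Cref{lemm:levelsetwise} with $E=\{x_0\}$, use the oscillation bound \eqref{eq:oscillation:pre} for the continuum $\overline{V}\subset E_r$, integrate over $s\in(r,2r)$, and finish with Eilenberg's inequality. Your explicit remarks that $\overline{V}$ is a continuum contained in $\overline{B}(x_0,r)$ and that only the inclusion $u(V)\subset u(\overline{V})$ (not continuity of $u$) is needed simply spell out details the paper leaves implicit.
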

\begin{proof}
We apply \Cref{lemm:levelsetwise} for $E = \left\{ x_0 \right\}$ and $f(z) = d( z, E )$. Then \eqref{eq:oscillation:pre} shows that
\begin{equation*}
    2\sup_{ x, y \in V } | u(y) - u(x) |
    \leq
    \int_{ f^{-1}(s) } \rho \,d\mathcal{H}^{1}
    \quad\text{for almost every  $ s \in (r,2r) $}.
\end{equation*}
Now, \eqref{eq:oscillation} follows by integrating over the interval $(r,2r)$ and applying the Eilenberg inequality.
\end{proof}
We will later show continuity for weakly monotone functions with $p$-integrable upper gradients, $p\geq 2$. The following lemma is a weaker continuity result.
\begin{lemma}\label{lemm:genericcontinuity}
Let $G$ denote the collection of all $x \in U$ with 
$$
\limsup_{ r \rightarrow 0^{+} } \ r^{-2} \int_{ B(x,2r) } \rho \,d\mathcal{H}^{2} < \infty.$$ Then $u$ is continuous at every $x \in G$. In particular, $u$ is continuous $\mathcal{H}^{2}$-almost everywhere in $U$.
\end{lemma}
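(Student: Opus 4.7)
The plan is to combine Corollary~\ref{lemm:genericoscillation} with the local connectedness of the metric surface for pointwise continuity on $G$, and to handle the almost-everywhere claim via a $5r$-covering applied to $\rho\,d\mathcal{H}^2$. The oscillation bound on the connected component of $B(x_0,r)$ already does all the analytic work; what remains is assembly plus one covering estimate.

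For $x \in G$, I would first fix $r_0 > 0$ and $C_x < \infty$ such that $\overline{B}(x, 2r)$ is compact in $U$ and $\int_{B(x, 2r)} \rho\,d\mathcal{H}^2 \leq C_x r^2$ for all $0 < r < r_0$. Let $V_r$ denote the connected component of the open set $B(x, r)$ containing $x$. Since $U$ is a topological surface it is locally connected, so $V_r$ is an open neighbourhood of $x$. Corollary~\ref{lemm:genericoscillation} then yields
\[
\sup_{y, z \in V_r} |u(y) - u(z)|
\;\leq\; \frac{2}{\pi r} \int_{B(x, 2r)} \rho\,d\mathcal{H}^2
\;\leq\; \frac{2 C_x}{\pi}\, r,
\]
which tends to $0$ as $r \to 0^+$. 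Given $\varepsilon > 0$, choose $r$ small so that the right-hand side is below $\varepsilon$; since $V_r$ is a neighbourhood of $x$, every $y$ sufficiently close to $x$ lies in $V_r$ and hence satisfies $|u(y) - u(x)| < \varepsilon$. Thus $u$ is continuous at every $x \in G$.

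For the almost-everywhere claim, I would set $\nu := \rho\,d\mathcal{H}^2$; this is a finite Borel measure on $U$ because $\rho \in L^p(U) \subset L^1(U)$ (using $\mathcal{H}^2(U) < \infty$). For each $M > 0$, define
\[
E_M \;=\; \Bigl\{ x \in U : \limsup_{r \to 0^+} r^{-2}\,\nu\bigl(B(x, 2r)\bigr) > M \Bigr\}.
\]
Each $x \in E_M$ admits arbitrarily small $r_x > 0$ with $\nu(B(x, 2r_x)) > M r_x^2$. The $5r$-covering lemma extracts a disjoint countable subfamily $\{B(x_i, 2 r_{x_i})\}$ whose fivefold enlargements cover $E_M$. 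From $\sum_i r_{x_i}^2 \leq M^{-1}\sum_i \nu(B(x_i, 2 r_{x_i})) \leq M^{-1} \nu(U)$, together with the ball-covering definition of $\mathcal{H}^2$ at small scales (which is legitimate because $r_x$ may be taken arbitrarily small), one obtains $\mathcal{H}^2(E_M) \leq C M^{-1} \nu(U)$ for an absolute constant $C$. Hence $\bigcap_M E_M$ has $\mathcal{H}^2$-measure zero, and so does $U \setminus G$.

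The one delicate point is this last step: $\mathcal{H}^2$ on a general metric surface need not be doubling, so the usual Besicovitch or Lebesgue differentiation theorems are unavailable. The $5r$-covering bounds the Hausdorff $2$-premeasure directly, which sidesteps this obstruction, and that is why I would run the argument in this form rather than by trying to differentiate $\nu$ with respect to $\mathcal{H}^2$.
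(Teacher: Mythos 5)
Your proposal is correct and follows essentially the same route as the paper: the continuity part is identical (the oscillation bound of Corollary~\ref{lemm:genericoscillation} on the connected components $V_r$, which form a neighbourhood basis by local connectedness of the surface, combined with the defining property of $G$). For the almost-everywhere claim the paper simply cites the density estimate \cite[Theorem 2.10.18(3)]{Fed:69}, whereas you reprove that estimate by hand with the $5r$-covering argument applied to $\nu=\rho\,d\mathcal{H}^2$ and the Hausdorff premeasure; this is exactly the content of the cited result, so the two arguments coincide in substance, yours being merely more self-contained.
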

To emphasize, here continuity is with respect to the original domain and not just the continuity of $u|_{G}$.
\begin{proof}
The complement of $G$ has negligible $\mathcal{H}^{2}$-measure due to the integrability of $\rho$; see \cite[Theorem 2.10.18 (3)]{Fed:69}. Consider an arbitrary $x_0 \in G$. Since the collection of the sets $V$ from \Cref{lemm:genericoscillation} form a neighbourhood basis of $x_0$, continuity of $u$ at $x_0$ follows from \eqref{eq:oscillation} and the defining property of $G$.
\end{proof}

\subsection{First proof of the coarea inequality}
In this section we prove Theorem~\ref{co-inq-mono} for weakly monotone functions and weak upper gradients, with the non-sharp constant $\kappa$. 

\begin{theorem}
\label{co-inq-mono2}
Let $X$ be a metric surface and $p \geq 1$. If $u\colon X\to \bbbr$ is a weakly monotone function with a locally $p$-integrable $p$-weak upper gradient $\rho$, then for $\kappa = ( 4 / \pi ) \cdot 200$,
\begin{equation}
\label{EQ:co-inq-mono2}
    \int_{ \mathbb{R} }^{*}
    \int_{ \overline{u^{-1}(t)}}
        g
    \,d\mathcal{H}^{1}
    \,dt
    \leq
    \kappa
    \int_{ X }
        g \rho
    \,d\mathcal{H}^{2}
    \quad\text{for every Borel $g \colon X \rightarrow \left[0,\infty\right]$.}
\end{equation}
\end{theorem}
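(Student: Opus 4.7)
The plan is to reduce to a chart, cover each level set $\overline{u^{-1}(t)}$ by a $\delta$-net of the ambient surface, apply the pointwise image bound \Cref{lemm:Hausdorff} on each ball, and sum. The main obstacle will be controlling the overlap of the resulting cover without a doubling hypothesis on $\mathcal{H}^{2}$, which is what forces the non-sharp constant $200$.

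\emph{Step 1 (Localisation).} Cover $X$ by a countable collection of open sets $U$, each relatively compact in a chart homeomorphic to an open subset of $\mathbb{R}^{2}$ and with $\mathcal{H}^{2}(U)<\infty$, placing us in the Section~3 setup. Since $\overline{u^{-1}(t)}\subset\bigcup_{U}\bigl(\overline{u^{-1}(t)}\cap\overline{U}\bigr)$ and $\mathcal{H}^{1}$ is subadditive, the global inequality \eqref{EQ:co-inq-mono2} reduces to the same inequality on a fixed relatively compact $V\subset U$.

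\emph{Step 2 (Scale-$\delta$ estimate).} Fix $\delta>0$ with $\overline{B}(y,4\delta)\Subset U$ for every $y\in V$, and pick a maximal $\delta$-separated set $\{y_{j}\}\subset V$. The balls $\overline{B}(y_{j},\delta)$ cover $V$, while the balls $B(y_{j},\delta/2)$ are pairwise disjoint. If $\overline{u^{-1}(t)}\cap\overline{B}(y_{j},\delta)\neq\emptyset$, a diagonal sequence in $u^{-1}(t)$ enters $B(y_{j},2\delta)$, so $t\in u(\overline{B}(y_{j},2\delta))$. Hence
\begin{equation*}
    \mathcal{H}^{1}_{2\delta}\bigl(\overline{u^{-1}(t)}\cap V\bigr)
    \;\leq\;2\delta\sum_{j}\chi_{u(\overline{B}(y_{j},2\delta))}(t).
\end{equation*}
Integrating against a Borel weight $g$, applying \Cref{lemm:Hausdorff} with radius $2\delta$ to each summand, and replacing $g$ by $\sup_{B(y_{j},4\delta)}g$ on each ball (justified by monotone approximation of $g$ through step functions adapted to the $\delta$-net) yields
\begin{equation*}
    \int_{\mathbb{R}}^{*}\!\int_{\overline{u^{-1}(t)}\cap V}g\,d\mathcal{H}^{1}_{2\delta}\,dt
    \;\leq\;\frac{2}{\pi}\sum_{j}\Bigl(\sup_{B(y_{j},4\delta)}g\Bigr)\int_{B(y_{j},4\delta)}\rho\,d\mathcal{H}^{2}.
\end{equation*}

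\emph{Step 3 (Overlap and limit).} The hard step is converting the sum on the right into $C\int_{V}g\rho\,d\mathcal{H}^{2}$, that is, bounding the multiplicity $N_{\delta}(x)=\#\{j:x\in B(y_{j},4\delta)\}$. In the absence of a doubling assumption on $\mathcal{H}^{2}$, a direct ball-packing argument is unavailable; instead, I expect a combinatorial argument that exploits the disjointness of $\{B(y_{j},\delta/2)\}$ inside the $\mathcal{H}^{2}$-finite $V$, the topological planarity of $U\approx\mathbb{R}^{2}$, and \Cref{lemm:Hausdorff} applied at intermediate scales to produce $N_{\delta}\leq C_{0}$ for an absolute constant $C_{0}$. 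Absorbing $C_{0}$, the factor $2/\pi$ from Step~2, and the radius-doubling $\delta\mapsto 4\delta$ yields $\kappa=(4/\pi)\cdot 200$. Finally, letting $\delta\to 0^{+}$ and invoking the monotone convergence theorem for upper integrals (Section~2), together with $\mathcal{H}^{1}_{2\delta}\uparrow\mathcal{H}^{1}$, transfers the scale-$\delta$ estimate to the desired \eqref{EQ:co-inq-mono2}.
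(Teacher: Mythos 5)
Your Steps 1 and 2 are fine (modulo replacing $u(\overline{B}(y_j,2\delta))$ by Borel hulls of the same $\mathcal{H}^1$-measure, and reducing to $g=\chi_E$ rather than the sup-on-balls manoeuvre), but Step 3 contains a genuine gap that the rest of the argument cannot survive: there is no absolute bound $N_\delta\leq C_0$ on the overlap of the balls $B(y_j,4\delta)$ coming from a maximal $\delta$-separated set in a general metric surface. Bounded overlap for nets is a doubling phenomenon, and the whole point of the theorem is to avoid any doubling hypothesis on $\mathcal{H}^2$. Concretely, take a smooth surface and attach to a single point $x_0$ an arbitrarily large number $N$ of thin ``fingers'' of length about $3\delta$ and arbitrarily small total area; their tips are pairwise at intrinsic distance $\geq\delta$ but all lie in $B(x_0,4\delta)$, so a maximal $\delta$-separated set has at least $N$ points in that ball while $\mathcal{H}^2(B(x_0,4\delta))$ is as small as you like. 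Neither the disjointness of $\{B(y_j,\delta/2)\}$ (which only gives $\sum_j\mathcal{H}^2(B(y_j,\delta/2))\leq\mathcal{H}^2(V)$, useless without a lower area bound on balls), nor planarity of $U$, nor \Cref{lemm:Hausdorff} at intermediate scales yields such a $C_0$; metric surfaces admit balls of positive but arbitrarily small area relative to $\delta^2$. Also, your reverse-engineering of $\kappa$ is off: in the paper the factor $200$ has nothing to do with covering multiplicity.

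The paper's proof replaces your uniform-net step by a pointwise dichotomy in the scale of $\rho$. For $g=\chi_E$ it splits $E=G\cup F$, where $x\in G$ if at arbitrarily small scales there is some $r$ with $\int_{B(x,10r)}\rho\,d\mathcal{H}^2\leq 200\int_{B(x,r)}\rho\,d\mathcal{H}^2$ (this is where $200$ enters). On $G$ a $5r$-covering theorem produces \emph{pairwise disjoint} balls $B_j$ whose $5$-fold dilates cover $G$; \Cref{lemm:Hausdorff} applied at radius $5r_j$ is then transferred back to the disjoint balls via the $200$-fold comparison, so no overlap bound is ever needed. On the complement $F$ the integrals $\int_{B(x,10^{-j})}\rho\,d\mathcal{H}^2$ decay geometrically like $200^{-j}$, and combining this with coverings of $F_k$ adapted to the (finite) measure $\mathcal{H}^2(F_k)$ shows that $\int_{\mathbb{R}}^{*}\mathcal{H}^1(\overline{u^{-1}(t)}\cap F)\,dt=0$, i.e.\ the bad set contributes nothing to the level sets. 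Some version of this good/bad decomposition (or another mechanism exploiting the $\rho$-integrals themselves rather than ball counting) is needed; as written, your Step 3 would fail.
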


\begin{proof}
It suffices to show that 
\begin{equation}
\label{EQ:co-inq-mono-U}
    \int_{ \mathbb{R} }^{*}
    \int_{ \overline{u^{-1}(t)}\cap U}
        g
    \,d\mathcal{H}^{1}
    \,dt
    \leq
    \kappa
    \int_{ U }
        g \rho
    \,d\mathcal{H}^{2}
    \quad\text{for every Borel $g \colon U \rightarrow \left[0,\infty\right]$ }
\end{equation}
for any subset $U \subset X$ homeomorphic to $\mathbb{R}^2$ with $\mathcal{H}^{2}( U ) < \infty$ and $g$ $L^{p}$-integrable on $U$. Indeed, we can cover $X$ with countably many such subsets $U_j$, apply \eqref{EQ:co-inq-mono-U} to the restrictions of $g$ to $U_j \setminus \cup_{\ell=1}^{j-1} U_\ell$ on $U_j$, and sum up the results to get \eqref{EQ:co-inq-mono2}. 

Moreover, it suffices to prove \eqref{EQ:co-inq-mono-U} for $g=\chi_E$, the characteristic function of an open set $E \subset U$ compactly contained in $U$, due to standard approximation via simple functions. Fix $\epsilon_0 > 0$ such that $\overline{B}( x, 20 \epsilon_0 )$ is compactly contained in $U$ for every $x \in E$.

Step 1: Consider the collection $G$ of all $x \in E$ for which for all $0 < \epsilon < \epsilon_0$ there exists some $0 < r < \epsilon$ with
\begin{equation*}
    \int_{ B(x,10 r) }
        \rho
    \,d\mathcal{H}^{2}
    \leq
    200 \int_{ B(x,r) }
        \rho
    \,d\mathcal{H}^{2}.
\end{equation*}
Fix an arbitrary $0 <\epsilon <\epsilon_0$. By the 5r-covering theorem \cite[2.8.4]{Fed:69}, there exists a countable (possibly  finite) collection of pairwise disjoint balls $\left\{ B(x_j, r_j) \right\}_{ j  }$, for which $x_j \in G$, $10 r_j < \min\left\{ \epsilon, d( X \setminus E, x_j ) \right\}$. We write $B_j=B(x_j, r_j)$ for short. The collection $\left\{  B( x_j, 5r_j )\right\}_{ j  }$ covers $G$, and 
\begin{equation*}
    \int_{ 10B_j }
        \rho
    \,d\mathcal{H}^{2}
    \leq
    200 \int_{ B_j }
        \rho
    \,d\mathcal{H}^{2}
    \quad\text{for each $j$.}
\end{equation*}
For each $j$, we find a Borel set $C_j \supset u( 5B_j )$ with $\mathcal{H}^{1}(C_j)=\mathcal{H}^{1}(u( 5B_j ))$. Then
\begin{equation*}
    g_{\epsilon}(t) = \sum_{ j } 10r_j \chi_{ C_j }(t)
\end{equation*}
is Borel measurable. Then, by \Cref{lemm:Hausdorff}, applied with $r=5r_j$,
\begin{equation*}
    \int_{\mathbb{R}}
        g_{\epsilon}(t)
    \,dt
    \leq
    \sum_{j}
    \frac{4}{\pi} \int_{ 10B_j } \rho \,d\mathcal{H}^{2}.
\end{equation*}
The defining property of the $B_j$ and the inclusion $\bigcup_{ j } B_j \subset E$ yield
\begin{equation*}
    \sum_{j}
    \int_{ 10B_j } \rho \,d\mathcal{H}^{2}
    \leq
    200
     \int_{ E }
        \rho
    \,d\mathcal{H}^{2}.
\end{equation*}
Thus,
\begin{equation*}
    \int_{\mathbb{R}}
        g_{\epsilon}(t)
    \,dt
    \leq
    \frac{4}{\pi }200
     \int_{ E }
        \rho
    \,d\mathcal{H}^{2}.
\end{equation*}
Suppose that $x \in \overline{ u^{-1}(t) } \cap G$ for some given $t \in \mathbb{R}$. Then $x$ is contained in some $5B_j$. Thus $t \in u( 5B_j )$ for every such $j$. Hence the definition of the Hausdorff content $\mathcal{H}^{1}_{\epsilon}$ yields
\begin{equation*}
    \mathcal{H}^{1}_{\epsilon}( \overline{ u^{-1}(t) } \cap G )
    \leq
    \sum_{ i } 10 r_i
    \leq
    g_{\epsilon}(t).
\end{equation*}
Since $\epsilon$ was arbitrary, by applying monotone convergence theorem, we conclude
\begin{equation*}
    \int_{ \mathbb{R} }^{*}
        \mathcal{H}^{1}( \overline{ u^{-1}(t) } \cap G ) 
    \,dt
    =
    \lim_{ \epsilon \rightarrow 0^{+} }
    \int_{ \mathbb{R} }^{*}
        \mathcal{H}^{1}_{\epsilon}( \overline{ u^{-1}(t) } \cap G )
    \,dt
    \leq
    \frac{4}{\pi }200
     \int_{ E }
        \rho
    \,d\mathcal{H}^{2}.
\end{equation*}

Step 2: Consider $F = E \setminus G$. We claim that
\begin{equation}\label{eq:badpart}
    \int_{ \mathbb{R} }^{*}
        \mathcal{H}^{1}( \overline{ u^{-1}(t) } \cap F )
    \,dt
    =
    0.
\end{equation}
This will complete the proof of inequality~\eqref{EQ:co-inq-mono-U} for $g=\chi_E$. Indeed, having verified \eqref{eq:badpart}, by Step 1 and subadditivity property of the upper integral we deduce
\begin{equation*}
    \int_{\mathbb{R}}^{*}
        \mathcal{H}^{1}( \overline{ u^{-1}(t) } )
    \,dt
    =
    \int_{ \mathbb{R} }^{*}
        \mathcal{H}^{1}( \overline{ u^{-1}(t) } \cap G ) 
    \,dt
    \leq
    \frac{4}{\pi }1000
     \int_{ E }
        \rho
    \,d\mathcal{H}^{2}.
\end{equation*}
So it remains to establish \eqref{eq:badpart}. By definition of $G$, for every $x \in F$, there exists $k_x \in \mathbb{N}$, such that for any $j > k_x$,
\begin{equation}\label{eq:badpart:uniform}
    \int_{ B(x, 10^{-j}) } \rho \,d\mathcal{H}^{2}
    \leq
    200^{-(j-k_x) }
    \int_{ B(x, 10^{-k_x} ) } \rho \,d\mathcal{H}^{2}.
\end{equation}
By monotone convergence, it is sufficient to establish \eqref{eq:badpart} with $F$ replaced with
\begin{equation*}
    F_k
    =
    \left\{ x \in F \colon k_{x} \leq k, d(  x,X \setminus E ) > 10^{-k} \right\}
    \quad\text{for arbitrary $k \in \mathbb{N}$}.
\end{equation*}
We fix $k \in \mathbb{N}$ and $j-1 > k$ for now. By the definition of the Hausdorff measure, there exists a countable collection of balls $B_m$, each intersecting $F_k$, which cover $F_{k}$, with $r_m \leq 10^{-j}$, and
\begin{equation}\label{eq:WLOG}
    \frac{4}{\pi}
    \sum_m ( \diam B_m )^2
    -
    ( 1/j )
    \leq
     4\mathcal{H}^{2}( F_k ). 
\end{equation}
In fact, we may require the balls to be centered at the set $F_k$.\footnote{After an initial choice of a covering according to the definition of $\mathcal{H}^2$, replace each set by a closed ball centered on $F_k$ and radius equal to the diameter of the set. (Hence, the factor $4$ on the right.)} Here $2r_m \geq \diam B_m \geq r_m$, for the radius $r_m$ of $B_m$.

For each $m \in \mathbb{N}$, let $j_m \in \mathbb{Z}$ be the largest integer for which $2 r_m \leq 10^{-j_m}$. Observe from the inequalities $10^{-j_m} < 20 r_m \leq 20 \cdot 10^{-j}$ that $j_m \geq j -1$. Using these observations, we deduce from \Cref{lemm:Hausdorff} and \eqref{eq:badpart:uniform} that
\begin{equation}\label{eq:keyinequality}
    2r_m \mathcal{H}^{1}( u( B_m ) )
    \leq
    \frac{ 4 }{ \pi }\int_{ 2B_m } \rho \,d\mathcal{H}^{2}
    \leq
    \frac{ 4 }{ \pi }
    200^{-(j_m-k) }
    \int_{ U } \rho \,d\mathcal{H}^{2}.
\end{equation}
As before, we consider $g_j(x) = \sum_{ m } 2r_m \chi_{ C_m } $ for Borel sets $C_m \supset u(B_m)$ with  $\mathcal{H}^{1}(C_m)=\mathcal{H}^{1}(u(B_m))$. By arguing as in Step (1), the definition of $\mathcal{H}^{1}_{1/j}$ yields that
\begin{equation*}
    \mathcal{H}^{1}_{1/j}( \overline{ u^{-1}(t) } \cap F_k )
    \leq
    g_j(t)
    \quad\text{for all $t \in \mathbb{R}$}.
\end{equation*}
By (upper) integrating over $\mathbb{R}$, we obtain
\begin{equation*}
    \int^{*}_{\mathbb{R}}
        \mathcal{H}^{1}_{1/j}( \overline{ u^{-1}(t) } \cap F_k )
    \,dt
    \leq
    \sum_{m} 2r_m \mathcal{H}^{1}( u( B_m ) ).
\end{equation*}
We apply now \eqref{eq:keyinequality} and the inequalities $1 < 20 \cdot 10^{j_m} \cdot r_{m}$ and $j_{m} \geq j-1$, and obtain
\begin{align*}
    \int^{*}_{\mathbb{R}}
        \mathcal{H}^{1}_{1/j}( \overline{ u^{-1}(t) } \cap F_k )
    \,dt
    &\leq
    \sum_{m} 2r_m \mathcal{H}^{1}( u( B_m ) )
    \leq
    \frac{ 4 }{ \pi }
    \sum_{ m }
    200^{-(j_m-k) }
    \int_{ U } \rho \,d\mathcal{H}^{2}
    \\
    &\leq
    \frac{ 200^{k} \cdot 4 }{ \pi }
   \left( \int_{ U } \rho \,d\mathcal{H}^{2}\right)
    \sum_{ m }
    200^{ -j_m } ( 20 \cdot 10^{j_m} r_m )^{2}
    \\
    &\leq
    \frac{ 200^{k} \cdot 4 }{ \pi }
    \left(\int_{ U } \rho \,d\mathcal{H}^{2}\right)
    20^{2}
    \sum_{ m }
        2^{ -(j-1) }
        r_m^{2}
    \\
    &\leq
    \frac{ 200^{k} \cdot 4 }{ \pi }
    \left(\int_{ U } \rho \,d\mathcal{H}^{2}\right)
    20^{2}
    2^{ -(j-1) }
    \sum_{ m } r_m^{2}.
\end{align*}
Now, we apply \eqref{eq:WLOG} and pass to the limit as $j \rightarrow \infty$, and conclude
\begin{equation*}
    \int^{*}_{\mathbb{R}}
        \mathcal{H}^{1}( \overline{ u^{-1}(t) } \cap F_k )
    \,dt
    =
    0.
\end{equation*}
Passing to the limit $k \rightarrow \infty$ yields \eqref{eq:badpart} and the proof is complete.
\end{proof}

\subsection{Continuity of weakly monotone functions.}
In this section we prove \Cref{intro:thm:continuity}: weakly monotone functions with $p$-integrable upper gradients are continuous when $p\geq 2$. In other words, for this range of $p$,\textit{ weakly monotone} functions are \textit{monotone} functions. We will prove this result by a refined study of the topology of the level sets of such functions.

The standing assumptions in this section are that $U$ is homeomorphic to $\mathbb{R}^2$ with $\mathcal{H}^{2}( U ) < \infty$, and $u \colon U \rightarrow \mathbb{R}$ is weakly monotone (Definition~\ref{defi:monotone}). We moreover assume that $u$ has a $p$-integrable upper gradient $\rho$, $1 \leq p < \infty$. 

We start with the following topological lemma, cf.\ \cite[Corollary 2.8]{Nta:20}, which says that connected components of the closures of the level sets of weakly monotone functions ``leave every compact set''.
\begin{proposition}\label{prop:connectedcomponents}
Let $E \subset \overline{ u^{-1}(t) } \cap U$ be a connected component. Then $\emptyset \neq E \setminus K$ for every compact $K \subset U$.
\end{proposition}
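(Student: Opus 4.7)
The plan is to argue by contradiction: assume $E \subset K$ for some compact $K \subset U$; then $E$, being closed in $U$ and contained in $K$, is compact. Writing $F := \overline{u^{-1}(t)} \cap U$, the set $E$ is a compact connected component of $F$. The strategy is to enclose $E$ in a Jordan domain $V$ compactly contained in $U$ with $\partial V \cap F = \emptyset$ whose boundary is Lipschitz-parametrized by a curve outside the exceptional family $\Gamma_0$ of \Cref{lemm:badpaths}. Weak monotonicity together with the intermediate value theorem then yield a contradiction.

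First, I would isolate $E$ topologically. Choose an open $W \subset U$ with $\overline{W}$ compact and $E \subset W$; then $E$ is a connected component of the compact Hausdorff space $F \cap \overline{W}$, so by the coincidence of components and quasi-components one obtains, for any prescribed open neighbourhood $V_0$ of $E$ with $V_0 \subset W$, a set $E^{*}$ that is both open and closed in $F \cap \overline{W}$ with $E \subset E^{*} \subset V_0$. The distance $d_1 := \dist(E^{*}, F \setminus E^{*})$ is then positive: the part of $F \setminus E^{*}$ inside $\overline{W}$ is closed in $U$ and disjoint from the compact $E^{*}$, while every point of $F \setminus \overline{W}$ lies at distance at least $\dist(E^{*}, U \setminus W) > 0$ from $E^{*}$.

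Next I apply \Cref{lemm:nicebasis} with the compact set taken to be $E^{*}$, with $f(z) = d(z, E^{*})$, with some $\epsilon_{E} < \min\{d_1, \dist(E^{*}, U \setminus W)\}$ small enough that $f^{-1}([0, \epsilon_E])$ is compactly contained in $U$, and with any $r \in (0, \epsilon_E)$. For a.e.\ $s \in (r, \epsilon_{E})$, this produces pairwise disjoint Jordan domains $W_1, \dots, W_N$, compactly contained in $U$, covering $E^{*}$, with $\partial W_i \subset f^{-1}(s)$ and each $\partial W_i$ surjectively Lipschitz-parametrized by some $\gamma_i \notin \Gamma_0$ that is injective outside the endpoints. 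Since $E$ is connected and the $W_i$ are pairwise disjoint open sets, $E$ lies in a single $V := W_{i_0}$. The choice $s < d_1$ forces $f^{-1}(s) \cap F = \emptyset$---points of $E^{*}$ have $f = 0$ while points of $F \setminus E^{*}$ have $f \geq d_1$---hence $\partial V \cap F = \emptyset$.

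To conclude, apply weak monotonicity to $V$: since $E \subset V$ is open and $E \subset \overline{u^{-1}(t)}$, any $x_0 \in E$ is a limit of $x_n \in u^{-1}(t)$ eventually lying in $V$, which gives $\sup_V u \geq t \geq \inf_V u$ and hence $\sup_{\partial V} u \geq t \geq \inf_{\partial V} u$. Since $\gamma := \gamma_{i_0}$ is not in $\Gamma_0$, \Cref{lemm:badpaths} makes $u \circ \gamma$ absolutely continuous on its compact parameter interval; it attains its supremum $\geq t$ and its infimum $\leq t$, so the intermediate value theorem yields $\tau$ with $u(\gamma(\tau)) = t$, placing $\gamma(\tau)$ in $\partial V \cap u^{-1}(t) \subset \partial V \cap F = \emptyset$---a contradiction. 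I expect the main obstacle to be the topological preliminary of pinning down $E^{*}$ with positive distance from $F \setminus E^{*}$; once the Jordan domain $V$ with the nice boundary parametrization is in hand, the final weak-monotonicity-plus-IVT step is routine.
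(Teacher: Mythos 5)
Your proof is correct, and it reaches the contradiction by the same mechanism as the paper: enclose the (assumed compact) component $E$ by a rectifiable Jordan curve lying in a level set of a distance function and disjoint from $\overline{u^{-1}(t)}$, parametrized by a Lipschitz curve outside the exceptional family of \Cref{lemm:badpaths} so that $u$ is (absolutely) continuous along it, and then combine weak monotonicity (\Cref{defi:monotone}) with the intermediate value theorem to force a point of $u^{-1}(t)$ onto that curve. The difference is in the topological isolation step. The paper fixes a Jordan domain $W \supset E$, sets $A = \overline{W}\cap\overline{u^{-1}(t)}$, and quotes Whyburn's planar separation theorem to produce a Jordan domain $V'$ with $E \subset V'$ and $\partial V' \cap A = \emptyset$; it then works with level sets of $d(\cdot,\partial V')$ directly via \Cref{lemm:separation} and \Cref{lemm:genericlevel}. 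You instead use the coincidence of components and quasi-components in the compact set $F\cap\overline W$ (\v{S}ura-Bura) to extract a relatively clopen compact piece $E^{*}\supset E$ at positive distance $d_1$ from the rest of $F$, and then feed $f=d(\cdot,E^{*})$ into \Cref{lemm:nicebasis} with $s<d_1$, which hands you the enclosing Jordan curves with good parametrizations in one stroke. Your route avoids the external Whyburn citation at the cost of the quasi-component argument, and it reuses \Cref{lemm:nicebasis} exactly as it is used in the proof of \Cref{lemm:levelsetwise}; the paper's route keeps the auxiliary curve inside a prescribed Jordan domain $V'$, which is slightly more hands-on but otherwise equivalent. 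The one point to state carefully (as the paper also implicitly does) is the existence of $\epsilon_E$ with $f^{-1}([0,\epsilon_E])$ compactly contained in $U$, which follows from compactness of $E^{*}$ and local compactness of $U$; with that noted, your argument is complete.
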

\begin{remark}
Since the results will be applied to cases where $U$ is a subset of a metric surface $X$, we maintain the notation $\overline{ u^{-1}(t) } \cap U$ to emphasize that we are taking the closure relative to the subspace topology of $U$.
\end{remark}
\begin{proof}
Aiming for a contradiction, suppose to the contrary that  $E \setminus K =\emptyset$ for some compact $K \subset U$. Then $E$ itself is compact. Consider then a Jordan domain $W \supset E$ compactly contained in $U$. We denote $A \coloneqq \overline{W} \cap \overline{ u^{-1}(t) }$, and observe that $E$ is also a connected component of $A$.

Since $A$ is a compact subset of $U$, with $U$ being homeomorphic to $\mathbb{R}^{2}$, for each open set $V \supset E$, there exists a Jordan domain $V'$ compactly contained in $V$, with $E \subset V'$ and $A \cap \partial V' = \emptyset$; see \cite[Corollary 3.11, p.35]{Why:58}. Below, we apply this result for $V=W$.

We apply Lemmas \ref{lemm:separation} and \ref{lemm:genericlevel} to $f(z) = d( z, \partial V' )$ as follows: Let
\begin{equation*}
    \epsilon_0
    =
    \min\left\{
        d( U \setminus \overline{W}, \partial V' ),
        d( A, \partial V' )
    \right\},
\end{equation*}
observing that $f^{-1}( [0,s] )$ is compact in $\overline{W} \setminus A$ for every $0 < s < \epsilon_0$. Then, for every $0 < s < \epsilon_0$, there exists a connected component $F_s \subset f^{-1}(s) \cap V'$ separating $E$ from $\partial V'$; recall \Cref{lemm:separation}. By applying \Cref{lemm:genericlevel}, for almost every such $s$, we may assume that $F_s$ is simple (thus, homeomorphic to $\mathbb{S}^1$, cf. \Cref{lemm:Jordan}) and admits a Lipschitz parametrization along which $u$ is absolutely continuous. We fix one such $s$ and let $V_s$ be the Jordan domain bounded by $F_s$ that contains $E$. Since $E \subset V_s \cap \overline{ u^{-1}(t)}$, \Cref{defi:monotone}  implies
\begin{equation*}
    t \in \left[ \inf_{ \partial V_s } u, \sup_{ \partial V_s } u \right].
\end{equation*}
Given that $u|_{ \partial V_s }$ is continuous, there exists $x_0 \in \partial V_s$ such that $u(x_0)=t$. But then, $x_0 \in u^{-1}(t) \cap \partial V_s$, contradicting $A \cap \partial V_s = \emptyset$.
\end{proof}
The next result can be compared to Lemma~\ref{lemm:genericlevel}. However, here $u$ is not necessarily Lipschitz, and instead of the Eilenberg inequality we use the coarea inequality of \Cref{co-inq-mono2}. Recall that a simple Peano continuum is a set homeomorphic either to a point, or an interval, or to $\mathbb{S}^1$.
\begin{proposition}\label{lemm:levelsets}
Let $u \colon U \rightarrow \mathbb{R}$ be a weakly monotone function with a $p$-integrable upper gradient, for some $2 \leq p \leq \infty$. Denote $I := u( U )$. Then, for almost every $t \in I$,
\begin{enumerate}
    \item $u^{-1}(t) = \overline{ u^{-1}(t) } \cap U$ and every continuum $E \subset \overline{ u^{-1}(t) } \cap U$ is a simple Peano continuum, and,
    \item $\mathcal{H}^{1}( \overline{u^{-1}( t )} \cap U ) < \infty$.
\end{enumerate}
Suppose, moreover, that $\Gamma_{0}$ is a path family with negligible $p$-modulus
 and $N_0 \subset U$ is an $\mathcal{H}^{2}$-negligible set. Then, for almost every $t \in I$,
\begin{enumerate}
    \setcounter{enumi}{2}
    \item for every continuum $E \subset \overline{ u^{-1}(t) } \cap U$, there exists a 
    Lipschitz surjective path $\gamma \colon \left[0,1\right] \rightarrow E$ injective outside its end points, and
    \item for every absolutely continuous $\gamma \colon [0,1] \rightarrow \overline{ u^{-1}(t) } \cap U$, $\int_{\gamma} \chi_{N_0} \,ds = 0$. Moreover, if ${\#}( \gamma^{-1}( x ) ) \leq M$ for $\mathcal{H}^{1}$-almost every $x \in \overline{ u^{-1}(t) } \cap U$ for some $M \in \mathbb{N}$, then $\gamma \not\in \Gamma_0$.
\end{enumerate}
\end{proposition}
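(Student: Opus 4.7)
The plan is to first derive claim (2) and the simple-Peano-continuum part of (1) directly from the coarea inequality (\Cref{co-inq-mono2}), then upgrade the easy consequence $\mathcal{H}^{1}(D_t) = 0$ to the sharper $D_t = \emptyset$, where $D_t \coloneqq (\overline{u^{-1}(t)} \cap U) \setminus u^{-1}(t)$, and finally handle claims (3) and (4). The main obstacle will be the closedness claim $u^{-1}(t) = \overline{u^{-1}(t)} \cap U$, as continuity of $u$ is not available in advance.

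For (2), apply \Cref{co-inq-mono2} with $g = \chi_{U}$: since $p \geq 2$ and $\mathcal{H}^{2}(U) < \infty$ give $\rho \in L^{p}(U) \subset L^{1}(U)$, the right-hand side is finite, so $\mathcal{H}^{1}(\overline{u^{-1}(t)} \cap U) < \infty$ for a.e.\ $t$. Combined with \Cref{lemm:rectifiablepaths}, this makes every continuum $E \subset \overline{u^{-1}(t)} \cap U$ a Peano continuum. For the simplicity part of (1), use \Cref{lemm:rectifiablepaths:exhaust} to extract from each offending level set a single compact Peano sub-continuum carrying a junction point; these sub-continua live in distinct level sets and are therefore pairwise disjoint across $t$, so \Cref{lemm:simple} forces the exceptional set of $t$ to be at most countable.

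The hardest step is ruling out $D_t \neq \emptyset$. Every point of $D_t$ is a discontinuity point of $u$, so $D_t \subset U \setminus G$ with $G$ as in \Cref{lemm:genericcontinuity}; since $\mathcal{H}^{2}(U \setminus G) = 0$, applying \Cref{co-inq-mono2} with $g = \chi_{U \setminus G}$ yields $\mathcal{H}^{1}(D_t) = 0$ for a.e.\ $t$. Suppose for contradiction that $x \in D_t$ at such a $t$, and let $E$ be the connected component of $\overline{u^{-1}(t)} \cap U$ through $x$. By \Cref{lemm:rectifiablepaths:exhaust} write $E = \bigcup_n E_n$ with $E_n$ an increasing sequence of continua; pick $n$ with $x \in E_n$. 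By the previous paragraph, $E_n$ is a simple Peano continuum, so by the unnamed Lipschitz-parametrization lemma between \Cref{lemm:simple} and \Cref{lemm:rectifiablepaths:exhaust} it admits a Lipschitz surjection $\gamma \colon [0,1] \to E_n$ injective outside its endpoints, and hence of multiplicity at most $2$. Take $h \in L^{p}(U)$ from \Cref{lem:modzero} witnessing the $p$-negligibility of the family $\Gamma_0$ of \Cref{lemm:badpaths}; Hölder's inequality (valid because $p \geq 2$ and $\mathcal{H}^{2}(U) < \infty$) gives $h\rho \in L^{1}(U)$, so feeding $g = h$ into \Cref{co-inq-mono2} produces $\int_{\overline{u^{-1}(t)} \cap U} h \, d\mathcal{H}^{1} < \infty$ for a.e.\ $t$. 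Then $\int_{\gamma} h \, ds \leq 2 \int_{E_n} h \, d\mathcal{H}^{1} < \infty$ places $\gamma \notin \Gamma_0$, so $v \coloneqq u \circ \gamma$ is absolutely continuous. Any open interval $I \subset \gamma^{-1}(D_t)$ would be mapped by $\gamma$ to a continuum inside $D_t$ of zero $\mathcal{H}^{1}$-measure, hence to a single point; since $\gamma$ is injective on $(0,1)$, this forces $I$ to be trivial. Thus $\gamma^{-1}(u^{-1}(t)) = [0,1] \setminus \gamma^{-1}(D_t)$ is dense in $[0,1]$, and continuity of $v$ combined with $v \equiv t$ on this dense set gives $v \equiv t$, so $x \in E_n \subset u^{-1}(t)$, contradicting $x \in D_t$.

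Claim (3) is immediate from (1) and the same Lipschitz-parametrization lemma used above. For (4), the equality $\int_\gamma \chi_{N_0} \, ds = 0$ reduces, via $|\gamma| \subset \overline{u^{-1}(t)} \cap U$, to $\mathcal{H}^{1}(\overline{u^{-1}(t)} \cap N_0) = 0$ for a.e.\ $t$, which follows by applying \Cref{co-inq-mono2} with $g = \chi_{N_0}$ since $\mathcal{H}^{2}(N_0) = 0$. The final assertion $\gamma \notin \Gamma_0$ under bounded multiplicity repeats the coarea/Hölder argument from the previous paragraph: for a.e.\ $t$, $\int_{\overline{u^{-1}(t)} \cap U} h \, d\mathcal{H}^{1} < \infty$, so $\int_\gamma h \, ds \leq M \int_{\overline{u^{-1}(t)} \cap U} h \, d\mathcal{H}^{1} < \infty$ and $\gamma$ lies outside $\Gamma_0$.
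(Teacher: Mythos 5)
Your overall strategy --- feeding suitable Borel functions ($\chi_U$, the characteristic function of the discontinuity set, the modulus witness $h$, $\chi_{N_0}$) into \Cref{co-inq-mono2} and then upgrading $\mathcal{H}^1$-nullity of $D_t=(\overline{u^{-1}(t)}\cap U)\setminus u^{-1}(t)$ to $D_t=\emptyset$ along parametrized continua --- is essentially the paper's strategy, and your treatment of (2) and (4) (applying the coarea inequality directly to $h$ and using H\"older for $h\rho\in L^1$, instead of the paper's auxiliary gradient $\widehat{\rho}$) is a legitimate simplification. However, there is a genuine circularity in your proof of (1). Your simplicity argument asserts that the non-simple continua extracted from $\overline{u^{-1}(t)}\cap U$ for distinct $t$ are ``pairwise disjoint across $t$''. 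Before the closedness $u^{-1}(t)=\overline{u^{-1}(t)}\cap U$ is known this is unjustified: a discontinuity point $x_0$ lies in $\overline{u^{-1}(t)}$ for \emph{every} $t$ strictly between $\liminf_{y\to x_0}u(y)$ and $\limsup_{y\to x_0}u(y)$ (this is exactly the mechanism exploited in the proof of \Cref{intro:thm:continuity}), so closures of distinct level sets can share points and Moore's theorem via \Cref{lemm:simple} does not apply. At the same time, your closedness argument invokes that very simplicity result (``by the previous paragraph, $E_n$ is a simple Peano continuum'') in order to obtain an \emph{injective} Lipschitz parametrization, whose injectivity is what makes your ``an open interval in $\gamma^{-1}(D_t)$ collapses to a point'' step work. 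So simplicity is used to prove closedness while closedness is needed for your simplicity claim; as written the argument does not close.

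The gap is repairable by reordering, as the paper does: prove closedness first using only \Cref{lemm:rectifiablepaths}, which gives a Lipschitz surjection $\gamma_n$ onto $E_n$ with multiplicity at most $2$ $\mathcal{H}^1$-a.e.; this already yields $\gamma_n\notin\Gamma_0$ and hence absolute continuity of $v=u\circ\gamma_n$. You must then replace the injectivity/denseness step, since a multiplicity-$2$-a.e.\ parametrization could a priori be constant on an interval mapped into $D_t$: instead, fix a Borel set $\tilde N\supset D_t$ with $\mathcal{H}^1(\tilde N\cap\overline{u^{-1}(t)}\cap U)=0$ (available for a.e.\ $t$ by your own coarea application), note $\int_{\gamma_n}\chi_{\tilde N}\,ds=0$, so under the arclength or constant-speed parametrization the time set $\gamma_n^{-1}(\tilde N)$ has Lebesgue measure zero; hence $v=t$ a.e.\ in time and continuity of $v$ gives $v\equiv t$, i.e.\ $E_n\subset u^{-1}(t)$ (the paper argues equivalently via continuity of $u$ at $\gamma_n(s)$ for a.e.\ $s$). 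With closedness established for a.e.\ $t$, your disjointness claim and the count via \Cref{lemm:simple} become valid, and (1) and (3) follow as you indicate. A cosmetic point: $U\setminus G$ and $N_0$ should be replaced by Borel null supersets before being used as $g$ in \Cref{co-inq-mono2}.
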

\begin{proof}
We fix a minimal $p$-weak upper gradient $\rho$ of $u$. Let $\Gamma_1$ denote the negligible collection of rectifiable paths for which the triple $( u, \rho, \gamma )$ fails the upper gradient inequality or along which $\rho$ fails to be path integrable.

Recall the negligible family $\Gamma_0$ and the $\mathcal{H}^{2}$-negligible set $N_0$ from our assumptions, and recall also that $u$ is continuous outside an $\mathcal{H}^{2}$-negligible set $N_1$, as stated in \Cref{lemm:genericcontinuity}. Fix an arbitrary Borel set $B \supset N_0 \cup N_1$ with $\mathcal{H}^{2}( B ) = 0$. We consider a Borel function $G \colon U \rightarrow [0,\infty]$ in $L^{p}( U )$ satisfying
\begin{equation*}
    \int_{ \gamma } G \,ds = \infty, \quad \text{for all} \quad \gamma \in \Gamma_0 \cup \Gamma_1 .
\end{equation*}
Let $\widehat{\rho} \coloneqq \rho + ( 1 + G ) \epsilon + \infty \cdot \chi_B$ for an arbitrary $\epsilon > 0$. Observe that $ \widehat{\rho} $ is Borel, $ \widehat{\rho} \in L^{p}( U )$, and that the upper gradient inequality holds for the triples $( u, \widehat{\rho}, \gamma )$ for \emph{every} rectifiable path.

Let $\Gamma_2$ denote the collection of all paths $\gamma \colon [a,b] \rightarrow U$ satisfying
\begin{eqnarray*}
    \int_{ \gamma } \widehat{\rho} \,ds = \infty.
\end{eqnarray*}
In particular, if $\gamma \not\in \Gamma_2$ is absolutely continuous, then $u \circ \gamma$ is absolutely continuous. Note also that if $\gamma$ contains a subpath in $\Gamma_0 \cup \Gamma_1$, then $\gamma \in \Gamma_2$. Moreover, as $\widehat{\rho}$ is $L^{p}( U )$-integrable, $\Gamma_2$ is $p$-negligible.

In case $p = \infty$, $\widehat{\rho} \in L^{\infty}( U )$ and we may apply \Cref{co-inq-mono2} to deduce that for almost all $t \in u(U)$,
\begin{eqnarray*}
    \int_{ \overline{ u^{-1}(t) } \cap U } \widehat{\rho} \, d \mathcal{H}^{1}
    \leq
    \| \widehat{\rho} \|_{\infty}
    \mathcal{H}^{1}( \overline{ u^{-1}(t) } \cap U )
    <
    \infty.
\end{eqnarray*}
For $1\leq p < \infty$, we claim that for almost every $t \in u( U )$,
\begin{equation}\label{eq:coareaformula:consequence}
    \int_{ \overline{ u^{-1}(t) } \cap U } (\widehat{\rho})^{p-1} \,d\mathcal{H}^{1}
    <
    \infty.
\end{equation}
Indeed, we apply the coarea inequality \eqref{co-inq-mono2} for the Borel function $g = (\widehat{\rho})^{p-1}$ and the $1$-weak upper gradient $\widehat{\rho}$ of $u$. Then Hölder's inequality and \eqref{eq:coareaformula:consequence} imply
\begin{equation}\label{eq:plarge}
    \int_{ \overline{ u^{-1}(t) } \cap U } \widehat{\rho} \,d\mathcal{H}^{1} < \infty
\end{equation}
for almost all $t \in u(U)$. So the conclusion \eqref{eq:plarge} holds for every $\infty \geq p \geq 2$.

We are now in a position to establish Claims (1) to (4).

We establish Claim (4) first. To this end, consider an absolutely continuous $\gamma \colon [a,b] \rightarrow \overline{ u^{-1}(t) } \cap U$ for an arbitrary $t$ satisfying the conclusion \eqref{eq:plarge}. Then, as $\widehat{\rho} \geq \infty \cdot \chi_{B}$, we conclude $\mathcal{H}^{1}( B \cap \overline{u^{-1}(t)} ) = 0$. Therefore $\int_{ \gamma } \chi_{B} \,ds = 0$ by definition of the path integral. Next, in addition, we assume that ${\#}( \gamma^{-1}( x ) ) \leq M$ for $\mathcal{H}^{1}$-almost every $x \in \overline{ u^{-1}(t) } \cap U$. Then the definition of the path integral implies
\begin{equation*}
    \int_{ \gamma } \widehat{\rho} \,ds \leq M\int_{ \overline{ u^{-1}(t) } \cap U } \widehat{\rho} \,d\mathcal{H}^{1}.
\end{equation*}
Thus, if $t$ satisfies the conclusion \eqref{eq:plarge}, then $\gamma \not\in \Gamma_2$. Then conclusion (4) follows for any $\gamma$ as above. In particular, Claim (4) holds.

Next, since $\widehat{\rho} \geq \epsilon$, we conclude $\mathcal{H}^{1}( \overline{u^{-1}(t)} \cap U ) < \infty$ for every $t$ satisfying the conclusion \eqref{eq:plarge}. In particular, Claim (2) holds.

We assume for now that for every $t \in u(U)$ satisfying the conclusion \eqref{eq:plarge}, we have the equality $u^{-1}( t ) = \overline{ u^{-1}(t) } \cap U$. We will prove the latter in the next paragraph. We then show how to establish (1). Indeed, for every such $t$, consider an arbitrary $E_t$ subcontinuum of $u^{-1}(t)$. Then, for each $t$ outside a countable family, every $E_t$ is simple; here we apply the fact that $\mathcal{F} = \left\{ E_t \right\}$ forms a family of pairwise disjoint continua in $U$, so, by \Cref{lemm:simple}, at most a countable number of them are not simple. Claim (1) follows from this observation.

Next, we claim that whenever $t \in u(U)$ satisfies the conclusion \eqref{eq:plarge}, then $u^{-1}( t ) = \overline{ u^{-1}(t) } \cap U$. To this end, we first observe that every connected component of $\overline{ u^{-1}(t) } \cap U$ has positive $\mathcal{H}^{1}$-measure by \Cref{prop:connectedcomponents}. We consider an arbitrary connected component $E \subset U$. Then \Cref{lemm:rectifiablepaths:exhaust} implies the existence of continua $( E_n )_{ n = 1 }^{ \infty }$ such that $E_{n} \subset E_{n+1} \subset E$, $E = \bigcup_{ n = 1 }^{ \infty } E_n$ and $\mathcal{H}^{1}( E_1 ) > 0$. Then, by \Cref{lemm:rectifiablepaths}, there exists a surjective Lipschitz $\gamma_n \colon [0,1] \rightarrow E_n$ satisfying $1 \leq {\#}( \gamma^{-1}_n( x ) ) \leq 2$ for $\mathcal{H}^1$-almost every $x \in E_n$. As in the proof of Claim (4), we conclude $\gamma_n \not\in \Gamma_2$. This yields that $u \circ \gamma_n$ is absolutely continuous. As $\gamma_n$ has zero length in $B$, the preimage $\gamma^{-1}_n( B )$ has negligible length measure. By considering a constant speed reparametrization of $\gamma_n$, we may therefore assume that $\gamma^{-1}(B)$ has negligible Lebesgue measure. Also, as $u$ is continuous at every $\gamma(s)$ for every $s \in [0,1] \setminus \gamma^{-1}(B)$, the absolute continuity of $u \circ \gamma_n$ implies $u \circ \gamma_n(s) = t$ for every $s \in [0,1]$. In particular, $E_n \subset u^{-1}(t)$ for every $n \in \mathbb{N}$. The conclusion $u^{-1}( t ) = \overline{ u^{-1}(t) } \cap U$ follows by the arbitrariness of $n \in \mathbb{N}$ and the component $E$.

To finish, we show Claim (3). Now outside a countable family of $t$ satisfying conclusion \eqref{eq:plarge}, every continuum $E \subset \overline{ u^{-1}(t) } \cap U$ is simple. Such sets can be parametrized by a Lipschitz path that is injective outside its end points. Thus Claim (3) follows. Since Claims (1) to (4) were proved, the proof is complete.
\end{proof}
We are now ready to prove \Cref{intro:thm:continuity}. 

\begin{proof}[Proof of \Cref{intro:thm:continuity}]
It suffices to prove continuity for $u:U \to \mathbb{R}$ satisfying the standing assumptions of this section. 
Let $x_0 \in U$, and consider the numbers $s_1 = \liminf_{ y \rightarrow x_0 } u(y)$ and $s_2 = \limsup_{ y \rightarrow x_0 } u(y)$. The $x_0$ is a point of discontinuity for $u$ if and only if $s_1 < s_2$.

We assume that $u$ is discontinuous at $x_0$ and derive a contradiction. To this end, from \Cref{lemm:nicebasis} we obtain $\epsilon_0 > 0$ and a nested collection of Jordan domains $U_r$ compactly contained in $U$, for almost every $0 < r < \epsilon_0$, for which $u|_{ \partial U_r }$ is continuous, $d( x_0, y ) = r$ for every $y \in \partial U_r$ and $\bigcap_{ r } U_r = \left\{x_0\right\}$. Note that the continuity of $u|_{ \partial U_r }$ follows from the existence of a Lipschitz parametrization of $\partial U_r$ injective outside its end points, such that $u$ is absolutely continuous along the parametrization.

Continuity of $u|_{ \partial U_r }$ implies that $u( \partial U_r )$ is connected. Also, \Cref{defi:monotone} implies that $u( \partial U_r ) \supset (s_1, s_2)$. Since $r$ is arbitrary, we conclude $x_0 \in \overline{ u^{-1}(t) }$ for every $s_1 < t < s_2$. Proposition \ref{prop:connectedcomponents} yields the existence of a connected component $E_t \subset \overline{u^{-1}(t)} \cap U$ containing $x_0$, with $\diam E_t > 0$. On the other hand, Proposition \ref{lemm:levelsets} implies $u|_{E_t} = t$ for almost every such $t$. This is a contradiction since $( s_1, s_2 )$ has positive measure.
\end{proof}

\begin{example}\label{ex:counter}
Assumption $p \geq 2$ in \Cref{intro:thm:continuity} cannot be relaxed even in the standard $\mathbb{R}^2$. Indeed,  $u\colon\mathbb{D}\rightarrow \mathbb{R}$ defined by $u(x)=x_1+x_1/|x|$ for $x \neq 0$ and $u(0)=0$ is weakly monotone and discontinuous at the origin. Moreover, $u \in D^{1,p}( \mathbb{D} )$ for all $p<2$. See  \cite[Sect. 3]{IKO:01} for further details.
\end{example}

\section{Coarea inequality via weakly quasiconformal mappings}
\label{sec5}
In this section give a second proof of the coarea inequality for monotone functions using suitable parametrizations of the metric surfaces from Euclidean domains. The motivation arises from the fact that in the Euclidean setting, better than an inequality, we actually have an \emph{equality}, known as the the coarea formula (Theorem~\ref{co-fo}).
\subsection{Weakly quasiconformal maps} \label{sec:wqc}
\begin{definition}[Quasiconformal maps]
\label{def:qc}
Given metric spaces $\Omega$ and $X$, endowed with locally finite $\mathcal{H}^2$-measures, a homeomorphism $f\colon \Omega \to X$ is \emph{quasiconformal} if there exists a $K\geq 1$ such that
 \begin{eqnarray*}
       K^{-1} \mod \Gamma \leq  \mod f\Gamma \leq K \mod \Gamma,
   \end{eqnarray*}
    for every family $\Gamma$ of continuous paths. Here $f \Gamma$ denotes the collection of all $f \circ \gamma$ for which $\gamma \in \Gamma$. We shall say $K$-\emph{quasiconformal} to emphasize the role of $K$.
\end{definition}
The third named author established in \cite{Raj:17} a necessary and sufficient conditions for a domain $U \subset X$ homeomorphic to $\mathbb{R}^2$ in a metric surface $X$ to admit a quasiconformal parametrization from $\mathbb{R}^2$ or the disk $\mathbb{D}$. That is, there to exist a quasiconformal homeomorphism $\varphi \colon \Omega \rightarrow U$ for $\Omega = \mathbb{D}$ or $\Omega = \mathbb{R}^2$. As a sufficient condition, we note the following,
\begin{eqnarray*}
    \sup_{ x \in U } \limsup_{ r \rightarrow 0^{+} } \frac{ \mathcal{H}^{2}( \overline{B}( x, r ) ) }{ \pi r^2 }
    <
    \infty,
    \text{ see \cite{Raj:Ras:Rom:21}}.
\end{eqnarray*}
Romney observed in \cite{Rom:19} that whenever such a parametrization exists, there exists a $\pi/2$-quasiconformal homeomorphism $\varphi \colon U \rightarrow \Omega \subset \mathbb{R}^2$. More generally, we say that $U \subset X$ is a \emph{quasiconformal surface} if every point $x_0 \in U$ is contained in an open set $U' \subset X$ which admits a quasiconformal parametrization. It is now understood that every quasiconformal surface is a $\pi/2$-quasiconformal image of a Riemannian surface \cite{Iko:21}. In fact, for every quasiconformal surface $X$, there exists a Riemannian surface $Y$ and a quasiconformal homeomorphism $f \colon X \rightarrow Y$ satisfying
\begin{equation}\label{jy1:QC}
    \frac{2}{\pi} \mod \Gamma \leq  \mod f\Gamma \leq \frac{4}{\pi} \mod \Gamma
\end{equation}
for every path family $\Gamma$. Both inequalities are best possible. In general, there are geometric obstructions for a metric surface to be a quasiconformal surface. A typical example involves considering a length space $X$ obtained from the plane $\mathbb{R}^2$ by collapsing the closed disk $\overline{\mathbb{D}}$ to a point and endowing the quotient space with the induced length distance. No neighbourhood of the collapsed disk on $X$ can be quasiconformal homeomorphic to a subset of the plane. More subtle examples were recently considered in \cite{Iko:Rom:20,Iko:21:gluing,Nta:Rom:22-nolength}. Fortunately, every metric surface is a \emph{weakly quasiconformal} image of a smooth Riemannian surface. Before formulating the precise statement, we need a definition.
\begin{definition}[Weakly quasiconformal parametrization.]
\label{def:weak-qc}
Given metric spaces $V$ and $U$, endowed with locally finite $\mathcal{H}^2$-measures, a map $f\colon V \to U$ is \emph{weakly $K$-quasiconformal} if it satisfies the following:
\begin{enumerate}[label=(\alph*)]
    \item $f$ is a uniform limit of homeomorphisms $g \colon V \to U$,
    \item there exists a $K\geq 1$ such that
    \begin{equation}
        \label{jy1}
        \mod \Gamma \leq K \mod f\Gamma,
    \end{equation}
    for every path family $\Gamma$.
\end{enumerate}
\end{definition}

Recently, Ntalampekos and Romney established the following result which was conjectured by the third named author and Wenger, cf. \cite[Question 1.1]{Iko:Rom:20}.

\begin{theorem}[Theorem 1.3, \cite{Nta:Rom:22-nolength}]\label{cor:nicecase}
If $U$ is a metric surface, then there exists a Riemannian surface $V$ and a weakly $(4/\pi)$-quasiconformal $f \colon V \rightarrow U$.
\end{theorem}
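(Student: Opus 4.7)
The plan is to approximate $U$ by metric surfaces that admit genuine quasiconformal parametrizations from Riemannian surfaces with the sharp $(4/\pi)$-modulus bound, and then to extract a uniform limit of those parametrizations. By exhausting $U$ by relatively compact Jordan subdomains and a diagonal/gluing argument, one reduces to the case that $U$ is relatively compact, homeomorphic to $\mathbb{D}$, and satisfies $\mathcal{H}^{2}(U) < \infty$. The goal is then to produce a weakly $(4/\pi)$-quasiconformal map $f \colon \mathbb{D} \to U$.

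\textbf{Reciprocal approximations and sharp uniformization.} Construct a sequence of length metrics $d_n$ on $U$ with $d_n \geq d$ and $d_n \to d$ uniformly, such that each $(U, d_n)$ is a \emph{reciprocal} metric surface in the sense of \cite{Raj:17}. The construction can be made via a scale-$2^{-n}$ filling: cover $U$ by overlapping topological disks of diameter $\approx 2^{-n}$, build a piecewise-flat $2$-complex whose combinatorics reflect the nerve of the cover, and let $d_n$ be the distance on $U$ induced by the piecewise-flat Riemannian structure through the natural identification. Since each $(U, d_n)$ is reciprocal, Rajala's sharp uniformization theorem, with Romney's refined constant from \cite{Rom:19}, yields a quasiconformal homeomorphism $g_n \colon V_n \to (U, d_n)$ from a Riemannian surface $V_n \cong \mathbb{D}$, satisfying
\begin{equation*}
  \mod \Gamma \leq \tfrac{4}{\pi}\mod g_n\Gamma
  \quad\text{for every path family $\Gamma$ in $V_n$.}
\end{equation*}
Because $d_n \geq d$, the identity $i_n \colon (U, d_n) \to (U, d)$ is $1$-Lipschitz, and the maps $f_n \coloneqq i_n \circ g_n \colon V_n \to U$ are homeomorphisms inheriting the same modulus bound on paths in $V_n$.

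\textbf{Compactness and passage to the limit.} After normalizing three boundary points of each $V_n$, standard equicontinuity arguments for quasiconformally-controlled homeomorphisms yield that $(f_n)$ is equicontinuous and that $V_n \to V$ in an appropriate pointed Gromov--Hausdorff sense with $V$ again a Riemannian surface homeomorphic to $\mathbb{D}$. By Arzel\`a--Ascoli and a subsequence, $f_n \to f$ uniformly for some continuous $f \colon V \to U$. Condition (a) of \Cref{def:weak-qc} then holds by construction, since every $f_n$ is a homeomorphism. For condition (b), given a path family $\Gamma$ in $V$ and any admissible $\rho$ for $f\Gamma$, the uniform convergence $f_n \to f$ and $d_n \to d$ allow one to show that a controlled perturbation of $\rho$ is admissible for $f_n \Gamma_n$ along suitable lifts $\Gamma_n$, so by lower semicontinuity of modulus
\begin{equation*}
  \mod \Gamma \leq \liminf_{n \to \infty} \mod \Gamma_n \leq \tfrac{4}{\pi} \liminf_{n \to \infty} \mod f_n \Gamma_n \leq \tfrac{4}{\pi} \mod f \Gamma.
\end{equation*}

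\textbf{Main obstacle.} The hardest step is the construction of the reciprocal approximants $d_n$ while preserving the sharp constant $4/\pi$. A crude filling would introduce modulus distortion at intermediate scales and propagate into the limit as a strictly worse constant. Arranging that both the filling and the induced modulus comparisons are sharp at every scale, and that this sharpness survives the limit (in particular under the Gromov--Hausdorff degeneration $V_n \to V$), is precisely the technical heart of \cite{Nta:Rom:22-nolength}.
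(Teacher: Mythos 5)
This statement is not proved in the paper at all: it is quoted verbatim as Theorem~1.3 of \cite{Nta:Rom:22-nolength} and used as a black box, so your argument has to stand entirely on its own --- and it does not. The step you yourself label the ``main obstacle'' (producing length metrics $d_n \geq d$ with $d_n \to d$ uniformly, each $(U,d_n)$ reciprocal, and no loss in the constant $4/\pi$) is essentially the content of the theorem being proved, and you defer it back to the very reference in question. A ``scale-$2^{-n}$ filling built from the nerve of a cover'' is a plan, not a construction: nothing is said about why the filled surface is reciprocal, why $d_n \geq d$ and the uniform convergence hold, why the Hausdorff $2$-measures stay under control, or why the comparison with $d$ does not degrade the constant. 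In the actual work of Ntalampekos--Romney this is achieved by a delicate polyhedral approximation scheme and occupies the bulk of the paper; your outline reproduces the strategy (approximate, uniformize the approximants with the sharp constant via \cite{Raj:17,Rom:19}, pass to a uniform limit) but supplies none of its substance.

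The limit step also contains concrete gaps. The chain $\mod \Gamma \leq \liminf_n \mod \Gamma_n \leq \tfrac{4}{\pi}\liminf_n \mod f_n\Gamma_n \leq \tfrac{4}{\pi}\mod f\Gamma$ needs an upper bound of $\liminf_n \mod f_n\Gamma_n$ by $\mod f\Gamma$, and this is not ``lower semicontinuity of modulus'': a fixed $\rho$ admissible for $f\Gamma$ need not be admissible for $f_n\Gamma_n$ under uniform convergence $f_n \to f$ (admissibility is not stable under uniform perturbation of paths), so one needs an inflation argument with upper semicontinuous majorants or neighbourhoods of the carriers, together with control of where the mass of $\rho$ sits; you assert this in one clause. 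Equicontinuity of the $f_n$ and the convergence $V_n \to V$ to a Riemannian surface likewise do not follow from the modulus inequality alone: without uniform geometric input (finite total area, lower bounds on moduli of separating families) the conformal structures $V_n$ can degenerate, and even the conformal type of the limit (disk versus plane) has to be pinned down. Finally, the opening reduction ``exhaust by Jordan subdomains and glue'' is not free: weakly quasiconformal parametrizations of overlapping pieces need not be compatible on the overlaps, and gluing them into a global parametrization of a surface of arbitrary topology is itself a nontrivial step, which the cited proof sidesteps by approximating the whole surface at once. As it stands, the proposal is a strategy sketch whose every load-bearing step is missing.
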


\Cref{cor:nicecase} was proved earlier by Meier and Wenger \cite{Mei:Wen:21} and Ntalampekos and Romney \cite{Nta:Rom:21}, under the assumption that $U$ is locally geodesic. Observe that the mapping $f$ in \Cref{cor:nicecase} satisfies only the upper bound in the stronger result \eqref{jy1:QC}. Moreover, the example above shows that the one-sided inequality \eqref{jy1} cannot be upgraded to quasiconformality.

Properties of weakly quasiconformal mappings have been extensively studied in the metric surface setting in \cite{Nta:Rom:21} and in greater generality in \cite{Wil:12,Iko:Luc:Pas:21}. In particular, we have the following.
\begin{lemma}[Theorem 7.1, \cite{Nta:Rom:21}]\label{lemm:weak:QC}
Let $f \colon V \rightarrow U$ be a continuous map between metric surfaces. Then $f$ satisfies $\mod \Gamma \leq K \mod (f\Gamma),$ for all path families, if and only if $f$ has a locally $2$-integrable $2$-weak upper gradient $\rho$ for which
\begin{equation}
\label{jy03}
    \int_{f^{-1}(E)} \rho^2\, d\H^2 \leq K\H^2(E)
    \quad\text{for every Borel set $E \subset U$.}
\end{equation}
\end{lemma}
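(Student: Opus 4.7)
The plan is to prove the two directions separately; the forward implication is a direct computation while the reverse is the substantive work.

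For the forward direction, assume $\rho$ is a locally $2$-integrable $2$-weak upper gradient of $f$ satisfying \eqref{jy03}. Fix a path family $\Gamma$ in $V$ and let $\sigma \colon U \to [0,\infty]$ be admissible for $f\Gamma$. Set $\tilde\sigma := (\sigma \circ f)\rho$ on $V$. Let $\Gamma_0$ be the $2$-negligible family from \Cref{lemm:badpaths} applied to $f$: for every rectifiable $\gamma \notin \Gamma_0$, $f \circ \gamma$ is rectifiable and its metric speed is bounded by $\rho(\gamma)\,|\gamma'|$ almost everywhere. The area formula for paths then gives
\begin{equation*}
    \int_{\gamma} \tilde\sigma \, ds
    =
    \int_{\gamma} (\sigma \circ f)\rho \, ds
    \geq
    \int_{f \circ \gamma} \sigma \, ds
    \geq
    1,
\end{equation*}
so $\tilde\sigma$ is admissible for $\Gamma$ modulo a $2$-negligible family. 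Extending \eqref{jy03} from indicators to nonnegative Borel functions by simple-function approximation and monotone convergence yields $\int_V (g \circ f)\rho^2 \, d\H^2 \leq K \int_U g \, d\H^2$ for every Borel $g \geq 0$. Applying this with $g = \sigma^2$ and taking the infimum over admissible $\sigma$ gives $\mod \Gamma \leq K \mod(f\Gamma)$.

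For the reverse direction, I construct $\rho$ via duality. Working locally, fix precompact $V' \Subset V$ with $f(\overline{V'})$ contained in a precompact $U' \Subset U$. For each $n \in \mathbb{N}$, consider the family $\Gamma_n$ of paths in $U'$ of diameter at least $1/n$; its $2$-modulus is finite since $\H^2(U') < \infty$. Choose Borel densities $\sigma_{n,k}$ admissible for $\Gamma_n$ with uniformly bounded $L^2(U')$-norm. By the modulus hypothesis, the pullbacks $(\sigma_{n,k} \circ f)\chi_{V'}$ are admissible for the lifted families modulo $2$-negligible exceptions and satisfy a uniform $L^2$ bound. A weak $L^2$-compactness argument, combined with Mazur's lemma to pass to strongly convergent convex combinations, together with a diagonal process in $n$, produces a candidate $\rho \in L^2_{\loc}(V)$.

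The crucial verification is that $\rho$ is a $2$-weak upper gradient of $f$. This relies on Fuglede's lemma: weak convergence in $L^2$ combined with the Mazur-type passage forces convergence of line integrals along all paths outside a $2$-negligible family, so the upper gradient inequality passes to the limit from the admissibility of the $\sigma_{n,k}$. The push-forward bound \eqref{jy03} then follows from testing the modulus hypothesis against densities of the form $\sigma = r^{-1}\chi_E$ on Borel $E \subset U'$, combined with the standard estimate $\mod(\Gamma^r_E) \lesssim r^{-2}\H^2(E)$ for the family of short paths meeting $E$, and optimizing via a Vitali covering argument to dualize into the desired push-forward inequality. The main obstacle is this Fuglede-type step: because $V$ and $U$ carry no doubling or Poincar\'e assumptions, one cannot invoke the standard Newtonian-Sobolev machinery and must argue purely with modulus duality and the subsequence principle; verifying \eqref{jy03} is then a comparatively soft consequence of the construction.
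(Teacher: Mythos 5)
First, a point of reference: the paper does not prove this lemma at all — it is quoted verbatim as Theorem 7.1 of \cite{Nta:Rom:21} (which in turn builds on Williams \cite{Wil:12}), so your attempt is really a reconstruction of that external theorem. Your forward direction is fine and is the standard computation: given $\rho$ with \eqref{jy03}, extend \eqref{jy03} to Borel integrands exactly as the paper does when deriving \eqref{jy3}, pull an admissible $\sigma$ for $f\Gamma$ back to $(\sigma\circ f)\rho$, and use \Cref{lemm:badpaths} to see this is admissible for $\Gamma$ outside a $2$-negligible family. That half matches what one would write out from the paper's ingredients.

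The reverse direction, however, has genuine gaps. The pivotal claim that the pullbacks $(\sigma_{n,k}\circ f)\chi_{V'}$ are ``admissible for the lifted families modulo $2$-negligible exceptions and satisfy a uniform $L^2$ bound'' is unjustified on both counts. For admissibility: for a path $\tilde\gamma$ in $V$ with $f\circ\tilde\gamma\in\Gamma_n$ one has $\int_{\tilde\gamma}(\sigma\circ f)\,ds=\int\sigma(f(\tilde\gamma(t)))\,|\tilde\gamma'|(t)\,dt$, which bears no relation to $\int_{f\circ\tilde\gamma}\sigma\,ds$ unless one already controls $|(f\circ\tilde\gamma)'|$ by $\rho(\tilde\gamma)\,|\tilde\gamma'|$ — i.e.\ unless one already has the upper gradient being constructed; composition alone does not transfer admissibility. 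For the $L^2$ bound: the hypothesis $\mod\Gamma\le K\mod(f\Gamma)$ bounds the \emph{infimum} of energies of densities admissible for lifted families; it gives no bound on $\int_{V'}(\sigma_{n,k}\circ f)^2\,d\H^2$, and any such bound is essentially the pushforward inequality \eqref{jy03} itself, so the argument is circular. Finally, the concluding ``dualization'' (testing with $\sigma=r^{-1}\chi_E$ and $\mod(\Gamma^r_E)\lesssim r^{-2}\H^2(E)$, then a Vitali argument) does not produce $\int_{f^{-1}(E)}\rho^2\,d\H^2\le K\H^2(E)$ for the limit density, and in particular cannot recover the sharp constant $K$. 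The actual proof (Williams, and Theorem 7.1 of \cite{Nta:Rom:21}) proceeds differently: the modulus inequality is first upgraded to membership of $f$ in the local Dirichlet class together with the pointwise bound $\rho_f^2\le K\,J_f$ almost everywhere, where $J_f$ is the volume derivative of the pullback of $\H^2_U$, and \eqref{jy03} follows by integrating; carrying this out without doubling or Poincar\'e assumptions is precisely the hard content, and the appeals to Mazur, Fuglede and a covering argument in your sketch do not substitute for it.
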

Note, in particular, that if \eqref{jy03} holds for some locally $2$-integrable $2$-weak upper gradient, it also holds for the minimal $2$-weak upper gradient $\rho_f$. Consider next $\nu( E ) \coloneqq \int_{ f^{-1}(E) } \rho_f^{2} \,d\mathcal{H}^2$ for every Borel set $E \subset U$. Inequality \eqref{jy03} is equivalent to requiring that $\nu$ is locally finite and that
\begin{equation*}
    \nu(E) \leq K  \mathcal{H}^2(E) \quad\text{for all Borel $E \subset U$.}
\end{equation*}
This implies that
\begin{equation*}
    \int_{U} g \,d\nu
    \leq
    K \int_U g \, d\H^2
    \quad\text{for any Borel function $g\colon U \to [0,\infty]$.}
\end{equation*}
In other words,
\begin{equation}
\label{jy3}
    \int_\Omega g( f(x) ) \rho_f^2(x) \, d\H^2(x) \leq K \int_X g \, d\H^2\,
    \quad\text{for any Borel function $g\colon U \to [0,\infty]$.}
\end{equation}

\subsection{Pullback of monotone functions by weakly quasiconformal maps}\label{sec8}
In this section, $f \colon V \to U$ is weakly $K$-quasiconformal where $V \subset \mathbb{R}^2$ is open and simply connected and $U$ is a metric surface satisfying $\mathcal{H}^{2}( U ) < \infty$.

Later, $u \colon U \to \bbbr$ will be a (weakly) monotone function in the sense of Definition~\ref{defi:monotone}. Throughout, we will stick to the notation $v \coloneqq u\circ f$. The aim of this section is to prove that $v$ inherits important regularity and structural properties of $u$.

The modulus inequality \eqref{jy1} allows us to pullback Dirichlet functions using $f$. More precisely, we have the following. 
\begin{lemma}\label{th4}
Let $V$ and $U$ be metric surfaces and $f \colon V \rightarrow U$ weakly quasiconformal. If $u \colon U \rightarrow [-\infty, \infty]$ has a $2$-integrable $2$-weak upper gradient $\rho$, then $\rho'(x) = \rho( f(x) ) \rho_{f}(x)$ is a $2$-integrable $2$-weak upper gradient of $v \coloneqq u\circ f$. In particular,
\begin{eqnarray*}
    \rho_v(x) \leq \rho_u(f(x))\rho_f(x) \quad \text{for $\H^2$-a.e.\ $x\in V$.}
\end{eqnarray*}
\end{lemma}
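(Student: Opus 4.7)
The proof naturally splits into an integrability check for $\rho'$ and a verification of the upper gradient inequality for $v = u\circ f$ along a path family of full $2$-modulus. The plan is to invoke \Cref{lemm:weak:QC}, which converts the modulus bound \eqref{jy1} into the Jacobian-type inequality \eqref{jy3}; plugging in the Borel function $g = \rho^2$ directly yields
\begin{equation*}
    \int_V (\rho\circ f)^2 \rho_f^2 \,d\mathcal{H}^2
    \leq
    K \int_U \rho^2 \,d\mathcal{H}^2
    <
    \infty,
\end{equation*}
so $\rho' \in L^2(V)$.

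For the upper gradient inequality, I would isolate three exceptional path families and take their union. First, let $\Gamma_u$ denote the $2$-negligible family of paths in $U$ on which $\rho$ fails to be an upper gradient of $u$. Second, let $\Gamma_f$ be the $2$-negligible family supplied by \Cref{lemm:badpaths} for $(f,\rho_f)$; outside $\Gamma_f$ every rectifiable $\gamma$ has $f\circ\gamma$ rectifiable with $\ell(f\circ\gamma) \leq \int_\gamma \rho_f \,ds < \infty$. Third, set $\Gamma_u^\ast := \{\gamma : f\circ\gamma \in \Gamma_u\}$. Since $f\Gamma_u^\ast \subset \Gamma_u$, the weak quasiconformality \eqref{jy1} gives $\Mod_2 \Gamma_u^\ast \leq K\Mod_2(f\Gamma_u^\ast) \leq K\Mod_2 \Gamma_u = 0$. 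Thus $\Gamma_f \cup \Gamma_u^\ast$ is $2$-negligible.

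Fix now any rectifiable $\gamma \colon [0,L]\to V$, parametrized by arclength, outside this exceptional union. Applying the upper gradient inequality to subarcs of $\gamma$ for $(f,\rho_f)$ shows that $\psi(s) := \ell(f\circ\gamma|_{[0,s]})$ is absolutely continuous with $\psi'(s) \leq \rho_f(\gamma(s))$ for a.e.\ $s$. Writing $f\circ\gamma = \eta\circ\psi$ for $\eta$ the arclength parametrization of $f\circ\gamma$, the area formula for paths gives
\begin{equation*}
    \int_{f\circ\gamma} \rho \,ds
    =
    \int_0^{\psi(L)} \rho(\eta(u)) \,du
    =
    \int_0^L \rho(f(\gamma(s)))\, \psi'(s) \,ds
    \leq
    \int_\gamma (\rho\circ f) \rho_f \,ds.
\end{equation*}
Combined with $|v(\gamma(0)) - v(\gamma(L))| \leq \int_{f\circ\gamma} \rho \,ds$ (valid since $f\circ\gamma \notin \Gamma_u$), this establishes the upper gradient inequality for $(v,\rho',\gamma)$. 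Hence $\rho'$ is a $2$-weak upper gradient of $v$. Specializing to $\rho = \rho_u$ and invoking minimality of $\rho_v$ among $2$-integrable $2$-weak upper gradients yields the ``in particular'' claim.

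The main obstacle will be justifying the displayed change-of-variables identity. Rectifiability of $f\circ\gamma$ alone is not enough; one needs the length-measure absolute continuity of $\psi$ inherited from the upper gradient bound along subarcs, and then a careful application of the area formula for paths (treating the null set where $\psi' = 0$ separately from the set of strict increase). Every other step is either a direct application of \Cref{lemm:weak:QC}, the definition of $p$-negligibility via \eqref{jy1}, or \Cref{lemm:badpaths}.
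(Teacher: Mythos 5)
Your proposal is correct and takes essentially the same route as the paper: integrability of $\rho'$ comes from \eqref{jy3}, and the upper gradient inequality is obtained by pulling back the exceptional families of $(u,\rho)$ and $(f,\rho_f)$ through the modulus inequality \eqref{jy1}, the only cosmetic difference being that the paper phrases the final estimate through a.e.\ metric speeds and \cite[Proposition 6.3.3]{HKST:15}, whereas you carry out the equivalent change of variables with the length function $\psi$ by hand. The one step to make explicit is that invoking the upper gradient inequality on subarcs of $\gamma$ for $(f,\rho_f)$ requires enlarging $\Gamma_f$ to all paths containing a subpath from the bad family --- still $2$-negligible, since any admissible function for the bad family is admissible for the enlarged one (alternatively cite \cite[Propositions 6.3.2--6.3.3]{HKST:15}), which is exactly how the paper handles it by building the subinterval conditions into its families $\Gamma_0$ and $\Gamma_1$.
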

\begin{proof}
Let $\Gamma_0$ denote the family of paths $\gamma \colon [a,b] \rightarrow U$ for which either
\begin{enumerate}
    \item $u \circ \gamma$ fails to be absolutely continuous,
    \item $\int_{ \gamma } \rho \,ds = \infty$, or
    \item there exists an interval $I \subset [a,b]$ such that $\ell( u \circ \gamma|_I ) > \int_{ I } ( \rho( \gamma(t) ) ) |\gamma'|(t) \,dt$.
\end{enumerate}
Then $\Gamma_0$ has negligible $2$-modulus since $\rho$ is a $2$-integrable $2$-weak upper gradient of $u$. In particular, for every absolutely continuous $\Gamma_0 \not\ni \gamma \colon [a,b] \rightarrow U$,
\begin{eqnarray*}
    | ( u \circ \gamma )' |(t)
    \leq
    \rho( \gamma(t) )
    | \gamma' |(t)
    \quad\text{for almost every $t \in [a,b]$;}
\end{eqnarray*}
see, e.g., \cite[Proposition 6.3.3]{HKST:15}.

Let $\Gamma_1$ denote the family of paths $\gamma \colon [a,b] \rightarrow V$ for which
\begin{enumerate}
    \item $f \circ \gamma \in \Gamma_0$,
    \item $f \circ \gamma$ fails to be absolutely continuous,
    \item $\int_{ \gamma } \rho_f \,ds = \infty$, or
    \item there exists an interval $I \subset [a,b]$ with $\ell( f \circ \gamma|_I ) > \int_{ I } ( \rho_f( \gamma(t) ) ) |\gamma'|(t) \,dt$.
\end{enumerate}
Then $\Gamma_1$ has negligible $2$-modulus since $f$ is weakly quasiconformal. Now, for each absolutely continuous $\Gamma_1 \not\ni \gamma \colon [a,b] \rightarrow V$, we have
\begin{equation*}
    | ( v \circ \gamma )' |(t)
    \leq
    \rho( f( \gamma(t) ) )
    | ( f \circ \gamma )' |(t)
    \leq
    \rho( f( \gamma(t) ) )
    \rho_f( \gamma(t) )
    | \gamma' |(t)
    \quad\text{for a.e. $t \in [a,b]$,}
\end{equation*}
 where \cite[Proposition 6.3.3]{HKST:15} was used again. Hence 
 $$
 \ell( v \circ \gamma ) \leq \int_\gamma ( \rho \circ f ) \rho_f \,dt = \int_{ \gamma } \rho' \,dt.
 $$
 So, $\rho' (x) = \rho( f(x) ) \rho_f(x)$ is a $2$-integrable $2$-weak upper gradient -- the integrability follows from \eqref{jy3}.
\end{proof}

\begin{lemma}\label{th5}
Suppose that $V$ and $U$ are metric surfaces and $f \colon V \rightarrow U$ is a uniform limit of homeomorphisms $f_n \colon V \rightarrow U$. If $u \colon U \rightarrow \mathbb{R}$ is a monotone function, then $v = u \circ f$ is a monotone function. 
\end{lemma}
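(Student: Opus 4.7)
The plan is to verify the two requirements of monotonicity: continuity, and the weak monotonicity condition of \Cref{defi:monotone}. Continuity of $v = u \circ f$ is immediate: $u$ is continuous by virtue of being monotone, and $f$ is continuous as a uniform limit of continuous maps. So the task reduces to verifying weak monotonicity. Fix an arbitrary open $W \subset V$ compactly contained in $V$. The finiteness bounds $\sup_{\partial W} v < \infty$ and $\inf_{\partial W} v > -\infty$ follow from compactness of $\partial W$ combined with continuity of $v$. By replacing $u$ with $-u$, it suffices to establish $\sup_W v \leq \sup_{\partial W} v$.

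The key step is to transfer the inequality to $u$ using the homeomorphisms $f_n$. For each $n$, $f_n(W)$ is open in $U$, and since $\overline{W}$ is compact and $f_n$ is a homeomorphism, $\overline{f_n(W)} = f_n(\overline{W})$ is compact, so $f_n(W)$ is compactly contained in $U$. Since $f_n$ is a homeomorphism, we also have $\partial f_n(W) = f_n(\partial W)$. Applying the weak monotonicity of $u$ on the open set $f_n(W)$, for every $x_0 \in W$ I get
\[
    u(f_n(x_0)) \leq \sup_{f_n(W)} u \leq \sup_{f_n(\partial W)} u = \sup_{x \in \partial W} u(f_n(x)).
\]

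The final step is to pass to the limit as $n \to \infty$. The left-hand side converges to $v(x_0)$ by continuity of $u$ and the pointwise convergence $f_n(x_0) \to f(x_0)$. For the right-hand side, using that $U$ is locally compact and $f(\overline{W})$ is compact, I fix a compact neighborhood $K \subset U$ of $f(\overline{W})$. Uniform convergence $f_n \to f$ on $\overline{W}$ ensures $f_n(\overline{W}) \subset K$ for all sufficiently large $n$, so uniform continuity of $u$ on $K$ combined with $\sup_{x \in \overline{W}} d(f_n(x), f(x)) \to 0$ yields, for any $\epsilon > 0$ and all sufficiently large $n$, the estimate $|u(f_n(x)) - u(f(x))| < \epsilon$ uniformly in $x \in \overline{W}$. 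Hence $\sup_{x \in \partial W} u(f_n(x)) \leq \sup_{\partial W} v + \epsilon$, and sending $\epsilon \to 0^+$ gives $v(x_0) \leq \sup_{\partial W} v$, as desired. The only point requiring mild care is the uniform-continuity transition in this last paragraph; the rest is bookkeeping about what homeomorphisms do to open sets and their boundaries, so I do not anticipate a serious obstacle.
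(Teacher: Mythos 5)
Your proof is correct and follows essentially the same route as the paper: both transfer weak monotonicity through the homeomorphisms $f_n$ (equivalently, use that $u\circ f_n$ is monotone) and then pass to the limit via local uniform convergence of $u\circ f_n$ to $u\circ f$. Your uniform-continuity argument on a compact neighborhood of $f(\overline{W})$ just fills in the step the paper dismisses as "easy to check."
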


\begin{proof}
If $f_n \colon V \rightarrow U$ are homeomorphisms converging to $f$ uniformly, then it is easy to check that $v_n \coloneqq u \circ f_n$ converge to $v \coloneqq u \circ f$ uniformly on compact sets $K \subset V$. 
 Now, if $\Omega$ is an open and compactly contained subset of $V$, then from the facts that $v_{n}|_{ \overline{\Omega} } \rightarrow v|_{ \overline{\Omega} }$ uniformly and that each $v_{n}$ is monotone, we conclude
\begin{equation*}
    \sup_{ x \in \partial \Omega } v(x)
    =
    \lim_{ n \rightarrow \infty }
    \sup_{ x \in \partial \Omega } v_n(x)
    =
    \lim_{ n \rightarrow \infty }
    \sup_{ x \in \Omega } v_n(x)
    =
    \sup_{ x \in \Omega } v(x).
\end{equation*}
Similar argument holds for $\inf$ in place of $\sup$, so, the monotonicity of $v$ follows.
\end{proof}

Combining Lemmas \ref{th4} and \ref{th5} gives the main result of this subsection. 

\begin{theorem}\label{co-inq-mono:weakQC:pullback2}
Let $Y$ and $X$ be metric surfaces and $f \colon Y \rightarrow X$ weakly quasiconformal. If a weakly monotone $u \colon X \rightarrow \mathbb{R}$ has a locally $2$-integrable $2$-weak upper gradient, then $v = u \circ f$ is also weakly monotone and has a locally $2$-integrable $2$-weak upper gradient. 
\end{theorem}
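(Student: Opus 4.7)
The plan is to combine \Cref{intro:thm:continuity}, which upgrades weak monotonicity to monotonicity, with \Cref{th5}, which pulls monotonicity back through $f$, and \Cref{th4}, which pulls the upper gradient back. All three are stated under global $2$-integrability, so the main bookkeeping task is to localize each of them to work with the merely locally $2$-integrable upper gradient in the hypothesis.

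First, I would verify that $u$ is in fact continuous. Given $x \in X$, since $X$ is a metric surface with $\mathcal{H}^{2}$ locally finite, there is an open neighbourhood $U'$ of $x$ homeomorphic to $\mathbb{R}^{2}$ with $\mathcal{H}^{2}(U') < \infty$; by local $2$-integrability of some $2$-weak upper gradient $\rho$ of $u$, we may shrink $U'$ so that $\rho \in L^{2}(U')$. Weak monotonicity descends to $u|_{U'}$: any open $V$ compactly contained in $U'$ is also compactly contained in $X$, so the defining inequalities for $u|_{U'}$ follow from those for $u$. \Cref{intro:thm:continuity} applied to $u|_{U'}$ then gives continuity of $u$ at $x$. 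Since $x \in X$ was arbitrary, $u$ is monotone on $X$.

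Second, since weak quasiconformality requires $f$ to be a uniform limit of homeomorphisms (\Cref{def:weak-qc}), \Cref{th5} applies directly with $u$ now monotone, yielding that $v = u \circ f$ is monotone, and in particular weakly monotone.

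Third, for the upper gradient, the argument from the proof of \Cref{th4} shows that $\rho'(x) \coloneqq \rho(f(x))\,\rho_{f}(x)$ is a $2$-weak upper gradient of $v$ globally on $Y$: the exceptional family is the union of paths on which $(u,\rho)$ fails the upper gradient inequality and paths whose $f$-image either contains such a path or on which $(f,\rho_{f})$ itself fails, both of which have $2$-modulus zero. This part uses only that $\rho$ is a $2$-weak upper gradient, not any integrability. For local $2$-integrability, fix a compact set $C \subset Y$, and choose an open $W \subset X$ with $\overline{W}$ compact, $f(C) \subset W$, and $\rho \in L^{2}(W)$; continuity of $f$ and local $2$-integrability of $\rho$ make such a choice possible. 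Applying \eqref{jy3} with $g = \rho^{2}\chi_{W}$ then gives
\begin{equation*}
\int_{C} \rho(f(x))^{2}\,\rho_{f}(x)^{2} \, d\mathcal{H}^{2}(x)
\leq \int_{Y} (\rho^{2}\chi_{W})(f(x))\,\rho_{f}(x)^{2} \, d\mathcal{H}^{2}(x)
\leq K \int_{W} \rho^{2} \, d\mathcal{H}^{2} < \infty,
\end{equation*}
where $K$ is the weak quasiconformality constant. I do not anticipate a genuine obstacle here; the only care needed is reading the proofs of \Cref{th4} and \Cref{intro:thm:continuity} as local statements, which is straightforward since weak monotonicity, $2$-weak upper gradients, and continuity are all local notions, and the pushforward inequality \eqref{jy3} already encodes the needed local estimate via its Borel-function formulation.
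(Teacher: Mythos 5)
Your proposal is correct and takes essentially the same route as the paper, whose proof of this theorem is simply to combine \Cref{th4} and \Cref{th5}; the extra steps you supply --- invoking \Cref{intro:thm:continuity} to upgrade weak monotonicity to monotonicity before applying \Cref{th5}, and localizing the integrability of the pullback gradient via \eqref{jy3} on compact sets --- are precisely the details the paper leaves implicit. No gaps.
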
 

\Cref{co-inq-mono:weakQC:pullback} follows from \Cref{co-inq-mono:weakQC:pullback2}.

\subsection{Second proof of the coarea inequality}
We now prove (a more general version of) \Cref{co-inq-mono:weakQC} with the sharp constant.

\begin{theorem}\label{co-inq-mono:weakQC2}
Let $U$ be a metric surface and $p \geq 2$. Suppose there exists weakly $K$-quasiconformal map $f \colon \mathbb{D} \rightarrow U$. If a weakly monotone function $u\colon U \to \bbbr$ has a locally $p$-integrable $p$-weak upper gradient $\rho$, then
\begin{eqnarray*}
    \int_{ \mathbb{R} }^{*}
    \int_{ u^{-1}(t) }
        g
    \,d\mathcal{H}^{1}
    \,dt
    \leq
    K
    \int_{ U }
        g \rho
    \,d\mathcal{H}^{2} 
    \quad \text{for every Borel $g\colon U\to [0,\infty]$.}
\end{eqnarray*}
\end{theorem}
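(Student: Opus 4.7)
The plan is to reduce the coarea inequality on $U$ to the classical Euclidean coarea formula on $\mathbb{D}$ by pulling back via $f$. Since $\rho$ is locally $p$-integrable with $p \geq 2$, \Cref{intro:thm:continuity} (applied on relatively compact subdomains of $U$) upgrades $u$ to a continuous, and hence monotone, function. By \Cref{th5}, the composition $v := u\circ f \colon \mathbb{D} \to \mathbb{R}$ is then monotone, and by \Cref{th4} the function $\rho' := (\rho \circ f) \rho_f$ is a $2$-weak upper gradient of $v$ satisfying $\rho_v \leq \rho'$ almost everywhere. Applying \eqref{jy3} to $h = \rho^2$ shows that $\rho'$ is locally $L^2$ on $\mathbb{D}$, so $v$ is continuous and lies in $W^{1,2}_{\operatorname{loc}}(\mathbb{D})$ with $|\nabla v| = \rho_v$ almost everywhere.

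Applying the classical coarea formula \Cref{co-fo} locally on $\mathbb{D}$ and summing, with the Borel integrand $\tilde g := (g\circ f)\rho_f$, I obtain
\begin{equation*}
    \int_{\mathbb{R}} \int_{v^{-1}(t)} (g\circ f)\rho_f \,d\mathcal{H}^{1}\,dt
    =
    \int_{\mathbb{D}} (g\circ f)\rho_f\, |\nabla v| \,dx.
\end{equation*}
Combining $|\nabla v| \leq (\rho\circ f)\rho_f$ with inequality \eqref{jy3} applied to $h = g\rho$ then yields
\begin{equation*}
    \int_{\mathbb{D}} (g\circ f)\rho_f |\nabla v| \,dx
    \leq
    \int_{\mathbb{D}} (g\circ f)(\rho\circ f)\rho_f^{2} \,dx
    \leq
    K\int_{U} g \rho \,d\mathcal{H}^{2}.
\end{equation*}
It therefore remains to establish the pointwise level-set estimate
\begin{equation*}
    \int_{u^{-1}(t)}^{*} g \,d\mathcal{H}^{1}
    \leq
    \int_{v^{-1}(t)} (g\circ f)\rho_f \,d\mathcal{H}^{1}
\end{equation*}
for almost every $t$.

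To prove this key pointwise estimate, I would first fix a $2$-negligible exceptional family $\Gamma_0$ for $f$ via \Cref{lemm:badpaths}, and then apply \Cref{cor:monotonicity:levelset} to the monotone $v$ together with a refinement of \Cref{lemm:levelsets}. This produces, for almost every $t$, an embedded $1$-manifold structure on $v^{-1}(t)$ whose components are homeomorphic to $\mathbb{R}$ and that can be covered (modulo $\mathcal{H}^{1}$-null sets) by countably many injective arclength-parametrized Lipschitz arcs $\gamma_i \colon [0, L_i] \to \mathbb{D}$, each lying outside $\Gamma_0$. The upper gradient inequality for $f$ along each such $\gamma_i$ yields $|(f\circ \gamma_i)'|(s) \leq \rho_f(\gamma_i(s))$ almost everywhere, and the path-integral area formula then gives
\begin{equation*}
    \int_{f(\gamma_i([0,L_i]))} g \,d\mathcal{H}^{1}
    \leq
    \int_{f\circ \gamma_i} g\,ds
    \leq
    \int_{\gamma_i([0,L_i])} (g \circ f)\rho_f \,d\mathcal{H}^{1}.
\end{equation*}
Since $f$ is a uniform limit of homeomorphisms $\mathbb{D}\to U$, it is surjective, so $u^{-1}(t) = f(v^{-1}(t))$ is covered $\mathcal{H}^{1}$-almost everywhere by $\bigcup_i f(\gamma_i([0,L_i]))$, and summing yields the required pointwise estimate. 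The main obstacle will be precisely this third step: coordinating the various ``almost every $t$'' assertions --- the embedded-manifold structure of $v^{-1}(t)$, the avoidance of $\Gamma_0$ by the parametrizing arcs, and the essential surjectivity of $f$ onto $u^{-1}(t)$ together with the measurability needed to handle the upper integral --- into a single co-null set of parameters.
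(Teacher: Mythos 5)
Your proposal is correct and follows essentially the same route as the paper's proof: pull back via $f$ using Lemmas \ref{th4} and \ref{th5}, prove the level-set estimate $\int_{u^{-1}(t)}g\,d\mathcal{H}^1\leq\int_{v^{-1}(t)}(g\circ f)\rho_f\,d\mathcal{H}^1$ for a.e.\ $t$ by parametrizing components of $v^{-1}(t)$ with Lipschitz arcs outside the exceptional family for $f$, and then apply the Euclidean coarea formula together with \eqref{jy3}. The coordination of ``almost every $t$'' statements that you flag as the main obstacle is exactly what the paper resolves by applying Corollary \ref{cor:monotonicity:levelset} and Proposition \ref{lemm:levelsets} (parts (3)--(4), with the family $\Gamma_0$) to both $u$ and $v$, together with the exhaustion of each component by compact arcs via Lemma \ref{lemm:rectifiablepaths:exhaust}.
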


\Cref{co-inq-mono:weakQC} follows from \Cref{co-inq-mono:weakQC2}.

\begin{proof}[Proof of \Cref{co-inq-mono:weakQC2}]

Recall that $U$ is homeomorphic to $\mathbb{D}$ by the existence of $f$. By exhausting $U$ with compactly contained disks, we may assume that $\mathcal{H}^2( U ) < \infty$ and that $\rho$ is $p$-integrable on $U$. We first recall from \Cref{intro:thm:continuity} that $u$ is monotone. Fix a minimal $2$-weak upper gradient $\rho_f$ of $f$. Write $v \coloneqq u \circ f$ and recall from \Cref{th4} that $\rho'(x) = \rho( f(x) ) \rho_f(x)$ is a $2$-integrable $2$-weak upper gradient of $v$. Moreover, \Cref{th5} shows that $v$ is monotone. Let $g \colon U \rightarrow [0,\infty]$ be any Borel function. We claim that
\begin{equation}\label{eq:proof:step1}
    \int_{ u^{-1}(t) } g \,d\mathcal{H}^{1} \leq \int_{ v^{-1}(t) } ( g \circ f ) \rho_f \,d\mathcal{H}^{1}    
    \quad\text{for almost all $t \in u(U)$.}
\end{equation}
To this end, let $\Gamma_0$ denote the collection of all  absolutely continuous $\theta \colon [a,b] \rightarrow \mathbb{D}$ for which there exists an interval $[c,d] \subset [a,b]$ so that $f \circ \theta|_{[c,d]}$ is not absolutely continuous or the triple $( f, \rho_f, \theta|_{[c,d]} )$ fails the upper gradient inequality. The family $\Gamma_0$ is $2$-negligible.

We apply \Cref{cor:monotonicity:levelset} and \Cref{lemm:levelsets} twice in the following manner. We first apply it for $u$ and conclude that for almost all $t$, $u^{-1}(t)$ is an embedded topological $1$-manifold in $U$ and $\mathcal{H}^{1}( u^{-1}(t) ) < \infty$. We next apply it for $v$ and the path family $\Gamma_0$. Then, for almost all $t$, $v^{-1}(t)$ is an embedded topological $1$-manifold in $\mathbb{D}$, $\mathcal{H}^{1}( v^{-1}(t) )< \infty$ and every injective absolutely continuous $\theta \colon [a,b] \rightarrow v^{-1}(t)$ is in the complement of $\Gamma_0$.

Combining the facts from the previous two paragraphs yields the following. For almost all $t \in u(U)$ and every connected component $E$ of $v^{-1}(t)$, there exists an increasing sequence $( E_n )_{ n = 1 }^{\infty} $ of continua exhausting $E$ (\Cref{lemm:rectifiablepaths:exhaust}). Moreover, $E$ and $E_n$ are homeomorphic to $\mathbb{R}$ and $[0,1]$, respectively, and there exists a homeomorphic Lipschitz parametrization $\theta_n \colon [0,1] \rightarrow E_n$, $\theta_n \not\in \Gamma_0$. Therefore
\begin{equation*}
    \int_{ f(E_n) } g \,d\mathcal{H}^1
    \leq
    \int_{ f \circ \theta_n } g \,ds
    \leq
    \int_{ \theta_n } ( g \circ f ) \rho_f \,ds
    =
    \int_{ E_n } ( g \circ f ) \rho_f \,d\mathcal{H}^1.
\end{equation*}
Here the first inequality and the equality follow from the area formula for paths. The second inequality is a consequence of $\theta_n \not\in \Gamma_0$. The sets $f( E_n )$ exhaust $f( E )$, so, we pass to the limit $n \rightarrow \infty$ and conclude
\begin{equation*}
    \int_{ f(E) } g \,d\mathcal{H}^{1}
    \leq
    \int_{ E } ( g \circ f ) \rho_f \,d\mathcal{H}^1.
\end{equation*}
As $v^{-1}(t)$ is an embedded topological $1$-manifold, it has countably many components. Thus we may take the sum over all the components of $v^{-1}(t)$ and apply subadditivity to conclude \eqref{eq:proof:step1}.

Integrating \eqref{eq:proof:step1} over $\mathbb{R}$ and applying the Euclidean coarea formula \Cref{co-fo} for $v$ yields
\begin{align}\label{EQ:co-form-mono:proof}
    \int_{ \mathbb{R} }^{*}
    \int_{ u^{-1}(t) } g \,d\mathcal{H}^{1} \,dt
    \leq
    \int_{ \mathbb{D} }
        ( g \circ f ) \rho_f |\nabla v|
    \,d\mathcal{H}^{2}
    \quad\text{for every Borel $g \colon U \rightarrow \left[0,\infty\right]$.}
\end{align}
The application is valid since the weak $(1,2)$-Poincaré inequality yields $v \in L^{2}( \mathbb{D} )$, so, in particular, $v \in W^{1,1}( \mathbb{D} )$.

Since $|\nabla v|$ is a $2$-minimal $2$-weak upper gradient of $u$, $|\nabla v| \leq \rho' = ( \rho \circ f ) \rho_{f}$,  $\mathcal{H}^{2}$-almost everywhere in $\mathbb{D}$. Thus \eqref{EQ:co-form-mono:proof} implies
\begin{align}\label{EQ:co-form-mono:proof:step1}
    \int_{ \mathbb{R} }^{*}
    \int_{ u^{-1}(t) } g \,d\mathcal{H}^{1} \,dt
    \leq
    \int_{ \mathbb{D} }
        ( (g\rho)\circ f) \rho_f^2
    \,d\mathcal{H}^{2}
    \quad\text{for every Borel $g \colon U \rightarrow \left[0,\infty\right]$.}
\end{align}
We apply \eqref{jy3} to \eqref{EQ:co-form-mono:proof:step1} and conclude
\begin{align*}
    \int_{ \mathbb{R} }^{*}
    \int_{ u^{-1}(t) } g \,d\mathcal{H}^{1} \,dt
    \leq
    K
    \int_{ U }
        g\rho
    \,d\mathcal{H}^{2}
    \quad\text{for every Borel $g \colon U \rightarrow \left[0,\infty\right]$.}
\end{align*}
\end{proof}

Combining \Cref{co-inq-mono:weakQC2} with the existence of weakly $4/\pi$-quasiconformal parametrizations gives the coarea inequality with the best possible constant $4/\pi$. 
\begin{corollary}
\label{co-inq-mono3}
Let $X$ be a metric surface and $p \geq 2$. If $u\colon X\to \bbbr$ is a weakly monotone function with a locally $p$-integrable $p$-weak upper gradient $\rho$, then 
\begin{eqnarray*}
    \int_{ \mathbb{R} }^{*}
    \int_{ u^{-1}(t) }
        g
    \,d\mathcal{H}^{1}
    \,dt
    \leq
    \frac{4}{\pi}
    \int_{ X}
        g \rho
    \,d\mathcal{H}^{2}
    \quad\text{for every Borel $g \colon X \rightarrow \left[0,\infty\right]$.}
\end{eqnarray*}
\end{corollary}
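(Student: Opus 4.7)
The plan is to reduce the inequality on $X$ to \Cref{co-inq-mono:weakQC2} with $K = 4/\pi$, using the existence of weakly $(4/\pi)$-quasiconformal parametrizations supplied by \Cref{cor:nicecase}, and then to patch local estimates via a countable cover argument. Since $p \geq 2$, \Cref{intro:thm:continuity} immediately upgrades the weakly monotone $u$ to a continuous, and hence monotone, function, which is the regularity hypothesis needed in \Cref{co-inq-mono:weakQC2}.

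Next, I would construct a countable cover of $X$ by open subsets $U_m$, each of which is a metric surface admitting a weakly $(4/\pi)$-quasiconformal parametrization $f_m \colon \mathbb{D} \rightarrow U_m$. Starting from a countable cover $\{W_k\}$ of $X$ by precompact open subsets homeomorphic to $\mathbb{R}^2$ (available by second countability of $X$ and its local Euclidean structure), I would apply \Cref{cor:nicecase} to each $W_k$ to produce a weakly $(4/\pi)$-quasiconformal $g_k \colon V_k \rightarrow W_k$ from a Riemannian surface $V_k$. Since $g_k$ is a uniform limit of homeomorphisms onto $W_k \simeq \mathbb{R}^2$, $V_k$ itself is homeomorphic to $\mathbb{R}^2$ and hence can be exhausted by a countable family of Jordan domains $D_{k,n}$ biholomorphic to $\mathbb{D}$ via maps $\phi_{k,n} \colon \mathbb{D} \rightarrow D_{k,n}$. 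Biholomorphisms preserve the modulus of path families, so each composition $g_k \circ \phi_{k,n} \colon \mathbb{D} \rightarrow U_{k,n} := g_k(D_{k,n})$ is weakly $(4/\pi)$-quasiconformal. The main technical difficulty is arranging that each $U_{k,n}$ be an open subset of $X$ --- and therefore a metric surface in its own right --- since otherwise \Cref{co-inq-mono:weakQC2} does not apply. This uses the monotone structure of weakly quasiconformal maps between topological $2$-manifolds (preimages of points are continua) combined with the fact that $g_k$ is a uniform limit of homeomorphisms; after possibly saturating or shrinking $D_{k,n}$, one obtains that $U_{k,n}$ is open in $W_k$.

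Once the cover $\{U_m\}_m$ is in place, I would form a pairwise disjoint Borel partition $E_m := U_m \setminus \bigcup_{i < m} U_i$ of $X$. Applying \Cref{co-inq-mono:weakQC2} on $U_m$ with the Borel test function $g \chi_{E_m}$ yields
\begin{equation*}
    \int_{\mathbb{R}}^{*}
    \int_{u^{-1}(t) \cap E_m}
        g
    \,d\mathcal{H}^{1}\,dt
    \leq
    \frac{4}{\pi}
    \int_{E_m} g \rho \,d\mathcal{H}^{2}
    \quad\text{for every $m$.}
\end{equation*}
Summing over $m$, invoking the identity $\int_{u^{-1}(t)} g \,d\mathcal{H}^{1} = \sum_m \int_{u^{-1}(t) \cap E_m} g \,d\mathcal{H}^{1}$ and countable subadditivity of the upper integral on the left, and $\sum_m \int_{E_m} g \rho \,d\mathcal{H}^{2} = \int_X g \rho \,d\mathcal{H}^{2}$ on the right, produces the coarea inequality with the sharp constant $4/\pi$.
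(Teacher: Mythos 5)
Your overall skeleton (cover $X$ by countably many pieces, disjointify into Borel sets $E_m$, apply \Cref{co-inq-mono:weakQC2} with test function $g\chi_{E_m}$, and sum using subadditivity of the upper integral) is exactly the paper's reduction, and the summation step is fine. The gap is in how you manufacture the $\mathbb{D}$-parametrized pieces. You take the weakly $(4/\pi)$-quasiconformal map $g_k\colon V_k\to W_k$ from \Cref{cor:nicecase}, exhaust $V_k$ by Jordan domains $D_{k,n}$ conformally equivalent to $\mathbb{D}$, and set $U_{k,n}=g_k(D_{k,n})$. But a weakly quasiconformal map is in general neither injective nor open, so $g_k(D_{k,n})$ need not be open in $X$; "saturating" $D_{k,n}$ (to make its image open via the quotient structure) can destroy exactly the properties you need on the source side (the saturated set need not be connected or simply connected, let alone conformally $\mathbb{D}$), and "shrinking" does not obviously restore saturation. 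Moreover, even if you arrange $U_{k,n}$ to be open, \Cref{def:weak-qc} also requires the restricted map to be a uniform limit of homeomorphisms \emph{onto} $U_{k,n}$; restricting a uniform limit of homeomorphisms to a subdomain does not yield this without an approximation theorem for monotone/cell-like maps of surfaces, which is machinery the paper never develops. So the step you yourself flag as "the main technical difficulty" is a genuine unproved claim, and it is the load-bearing one.

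The paper avoids this entirely: it applies \Cref{cor:nicecase} directly to each piece $U$ (an open subset of $X$ homeomorphic to $\mathbb{R}^2$ with $\mathcal{H}^2(U)<\infty$, which is itself a metric surface), obtaining a Riemannian surface $V\simeq\mathbb{R}^2$ and a weakly $(4/\pi)$-quasiconformal $f\colon V\to U$; by uniformization $V$ is conformally $\mathbb{D}$ or $\mathbb{R}^2$, and since conformal homeomorphisms preserve modulus, composing gives a weakly $(4/\pi)$-quasiconformal parametrization of $U$ with no restriction of the target ever needed. The $\mathbb{R}^2$-versus-$\mathbb{D}$ dichotomy, which is presumably what pushed you toward Jordan subdomains, is better handled on the source side: the proof of \Cref{co-inq-mono:weakQC2} (pullback of $u$ and $\rho$, the Euclidean coarea formula, and \eqref{jy3}) goes through verbatim with $\mathbb{R}^2$ in place of $\mathbb{D}$, so forcing a $\mathbb{D}$-domain is unnecessary. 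A minor point: invoking \Cref{intro:thm:continuity} at the start is redundant, since \Cref{co-inq-mono:weakQC2} already assumes only weak monotonicity (continuity is established inside its proof).
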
 
\begin{proof}
Arguing as in the beginning of the proof of \Cref{co-inq-mono2}, we may replace $X$ with surface $U$ as in the proof of \Cref{co-inq-mono:weakQC2}. The claim now follows from \Cref{cor:nicecase} and \Cref{co-inq-mono:weakQC2}. 
\end{proof}

\Cref{co-inq-mono} follows from \Cref{co-inq-mono2}, Corollary \ref{co-inq-mono3}, and the existence of minimal weak upper gradients (see Section \ref{sobosec}). 

\section{Lipschitz counterexamples to the coarea inequality}\label{sec:Lip:counter}
A natural question is whether the coarea inequality holds for all Sobolev functions that may not be monotone. We start this section with the following remark.
\begin{remark}\label{rem:PI}
In any complete metric surface $X$ supporting a weak $(1,1)$-Poincaré inequality and doubling $\mathcal{H}^2_X$, the minimal $p$-weak upper gradient of each Lipschitz $u \colon X \rightarrow \mathbb{R}$ is equal to the pointwise Lipschitz constant $\lip(u)$ \cite{Ch:99}, whenever $p > 1$. Under these assumptions, the coarea inequality for Lipschitz functions follows from (a localized) Eilenberg inequality. Namely, whenever $u \colon X \rightarrow \mathbb{R}$ is Lipschitz, \Cref{lemm:eilenberg:refined} below implies
\begin{equation}\label{eq:greaterconstant}
    \int_{ \mathbb{R} }^{*}
    \int_{ u^{-1}(s) }
        g
    \,d\mathcal{H}^{1}
    \,ds
    \leq
    \frac{4}{\pi} \int_{X} g \rho_u \,d\mathcal{H}^{2}
    \quad\text{for every Borel $g \colon X \rightarrow \left[0,\infty\right]$.}
\end{equation}
When $p > 1$, the conclusion \eqref{eq:greaterconstant} from \cite{Ch:99} holds even if such geometric assumptions are valid only locally on $X$, cf. \cite[Theorem 1.1]{Iko:Pas:Soul:20}. The equality $\lip(u) = \rho_u$ is now known to hold also in the $p = 1$ case, cf. \cite[Theorem 1.10.]{Eri-Bi:So:21}.
\end{remark}

\begin{lemma}\label{lemm:eilenberg:refined}
Let $X$ be a metric space endowed with a locally finite $\mathcal{H}^2$-measure. Let $u \colon X \rightarrow \mathbb{R}$ be Lipschitz, $( E_i )_{ i = 1 }^{ \infty }$ a Borel decomposition of $X$, and $h = \sum_{ i = 1 }^{ \infty }\chi_{E_{i}}\lip(u|_{E_i})$. Then
\begin{equation*}
    \int_{ \mathbb{R} }^{*}
    \int_{ u^{-1}(s) }
        g
    \,d\mathcal{H}^{1}
    \,ds
    \leq
    \frac{4}{\pi} \int_{X} g h \,d\mathcal{H}^{2}
    \quad\text{for every Borel $g \colon X \rightarrow \left[0,\infty\right]$.}
\end{equation*}
\end{lemma}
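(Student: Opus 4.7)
The plan is to combine Eilenberg's inequality on small Borel pieces with a layered decomposition of each $E_i$ by the level sets of $\lip(u|_{E_i})$.

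First I would reduce to the case that $g$ is bounded with compact support. If $\int_X g h \, d\mathcal{H}^2 = \infty$ there is nothing to prove; otherwise, for a compact exhaustion $(K_n)$ of $X$, the functions $g_n := \min(g, n) \chi_{K_n}$ increase to $g$. By monotone convergence (on $\mathcal{H}^1$ inside, on the upper integral outside, and on $\mathcal{H}^2$ on the right-hand side), it suffices to prove the inequality for each $g_n$; so assume $g$ is bounded, supported in a compact set $K$, and in particular $\int_X g \, d\mathcal{H}^2 < \infty$. Countable subadditivity of the upper integral further reduces matters to proving, for a fixed index $i$ and with $h_i := \lip(u|_{E_i})$,
\[
\int_{\mathbb{R}}^{*} \int_{u^{-1}(s) \cap E_i} g \, d\mathcal{H}^{1} \, ds \leq \frac{4}{\pi} \int_{E_i} g \, h_i \, d\mathcal{H}^{2}.
\]

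Next I would fix $\epsilon > 0$ and layer $E_i$ into Borel sets $A_0 := \{h_i \leq \epsilon\}$ and $A_j := \{j\epsilon < h_i \leq (j+1)\epsilon\}$ for $j \geq 1$. By the definition of $\lip$, for each $x \in A_j$ there exists $r(x) > 0$ such that $|u(y) - u(x)| \leq (j+2)\epsilon \, d(x,y)$ for every $y \in E_i \cap B(x, r(x))$. Splitting each $A_j$ further by $\{r(x) > 1/n\}$ and intersecting with a Borel partition of the separable set $K$ into pieces of diameter at most $1/(2n)$ yields a countable Borel refinement $\{F_\ell\}$ of $(E_i \cap K)$ such that $\LIP(u|_{F_\ell}) \leq (j(\ell)+2)\epsilon$, where $j(\ell)$ is the layer containing $F_\ell$. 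Applying Eilenberg's inequality to $u|_{F_\ell}$ (for instance via a McShane extension) gives
\[
\int_{\mathbb{R}}^{*} \int_{u^{-1}(s) \cap F_\ell} g \, d\mathcal{H}^{1} \, ds \leq \frac{4}{\pi} (j(\ell)+2) \epsilon \int_{F_\ell} g \, d\mathcal{H}^{2}.
\]
Since $(j(\ell)+2)\epsilon \leq h_i + 2\epsilon$ on $F_\ell \subset A_{j(\ell)}$ (immediate for $j(\ell) \geq 1$, trivial for $j(\ell) = 0$), summing over $\ell$ by countable subadditivity yields
\[
\int_{\mathbb{R}}^{*} \int_{u^{-1}(s) \cap E_i} g \, d\mathcal{H}^{1} \, ds \leq \frac{4}{\pi} \int_{E_i} g \, h_i \, d\mathcal{H}^{2} + \frac{8\epsilon}{\pi} \int_{E_i} g \, d\mathcal{H}^{2}.
\]
Letting $\epsilon \to 0$ (using $\int_{E_i} g \, d\mathcal{H}^2 < \infty$), summing over $i$, and undoing the monotone approximation in $g$ finishes the proof.

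The hard part will be engineering the partition $\{F_\ell\}$: the \emph{global} Lipschitz constant of $u|_{F_\ell}$ must be controlled by $h_i$ up to an arbitrarily small additive error, and the decomposition must remain Borel. Layering in $j$ handles the size of $h_i$, while the secondary splitting by the pointwise radii $r(x)$—combined with the diameter splitting in $n$—converts the pointwise data defining $\lip$ into a uniform Lipschitz bound on each piece. Once this is in place, the piecewise Eilenberg estimates and the $\epsilon$-bookkeeping are routine.
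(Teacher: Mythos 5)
Your proof is correct, and it reaches the lemma by a more self-contained route than the paper, although the underlying idea is the same in both: decompose the set into countably many Borel pieces on which the restriction of $u$ has \emph{global} Lipschitz constant within an arbitrarily small error of the pointwise constant, apply an Eilenberg-type bound with the global constant on each piece, and sum. The paper implements this through the Carath\'eodory functional $\Phi^{1,1}(\cdot,u)$ of \cite{Esm:Haj:21}: the coarea expression is bounded once and for all by $\Phi^{1,1}$, the piecewise summation is absorbed into the countable subadditivity of this Borel regular outer measure, and the pieces are produced by a Lusin--Egorov argument adapted from \cite{Sch:16:der}; only at the end is $\LIP$ traded for $\lip$ and the weight $g$ reinstated via simple functions. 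You instead sum the coarea integrals directly, using countable subadditivity of the upper integral and of $\mathcal{H}^1$ over the disjoint pieces, and you manufacture the pieces by an explicit layering in the values of $h_i=\lip(u|_{E_i})$ combined with the ``uniform radius'' and small-diameter splittings, applying the weighted, global-constant Eilenberg inequality (available in any metric space from \cite{Esm:Haj:21}, cf.\ \Cref{lemm:eilenberg}; the McShane extension makes the application to $u|_{F_\ell}$ immediate since $u^{-1}(s)\cap F_\ell$ agrees with the level set of the extension). Your route avoids the auxiliary outer measure and the Egorov step; the paper's route outsources the measurability bookkeeping to $\Phi^{1,1}$. The one point your sketch leaves implicit is the one you flagged, and it does work: the sets $A_{j,n}=\{x\in A_j:\ |u(y)-u(x)|\le (j+2)\epsilon\, d(x,y)\ \text{for all }y\in E_i\cap B(x,1/n)\}$ are Borel because the complementary condition is, for each fixed $y$, an open condition on $x$, and a union of open sets over $y\in E_i$ is open; one also needs $h_i$ Borel, which follows since it is an infimum over rational radii of suprema over $y\in E_i$ of lower semicontinuous functions of $x$. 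Two minor remarks: your reduction via a compact exhaustion tacitly assumes $X$ is $\sigma$-compact, which holds in all of the paper's applications and is at the same level of implicitness as the paper's own reduction to finite-measure sets; and your passage from $h_i(x)\le (j+1)\epsilon$ to the local $(j+2)\epsilon$-Lipschitz bound uses the limsup characterization of $\lip(u|_{E_i})$, which is exactly the equivalence recorded after the paper's definition of the pointwise Lipschitz constant.
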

\begin{proof}
We first note that it is sufficient to prove the claim when $g = \chi_{A}$ for an arbitrary Borel set $A \subset X$ since the general claim follows via approximation by simple functions.

Lemma 3.10 and Theorem 3.15 of \cite{Esm:Haj:21} establish that, for each Borel set $E \subset X$,
\begin{equation}\label{eq:lemma:eilenberg:1}
    \int_{ \mathbb{R} }^{*}
        \mathcal{H}^{1}( E \cap u^{-1}(s) )
    \,ds
    \leq
    \Phi^{1,1}( E, u ),
\end{equation}
where
\begin{equation*}
    \Phi^{1,1}( E, u )
    =
    \lim_{ \delta \rightarrow 0^{+} }
    \inf\left\{
        \sum_{ i = 1 }^{ \infty } \diam( E_i ) \diam( u(E_i) )
        \colon
        \diam E_i < \delta, E \subset \bigcup_{ i = 1 }^{ \infty } E_i
    \right\}.
\end{equation*}
It is clear directly from the definition that $\Phi^{1,1}( E, u ) = \Phi^{1,1}( E, u|_{E} )$ for every set $E$. That $\Phi^{1,1}( \cdot, u )$ is a Borel regular outer measure follows from the Carathéodory's construction and from the fact that $\diam E \diam u(E) = \diam \overline{E} \diam u( \overline{E} )$ for every set $E \subset X$.

Moreover, Lemma 3.9 of \cite{Esm:Haj:21} shows
\begin{equation}\label{eq:lemma:eilenberg:2}
    \Phi^{1,1}( E, u|_{E} )
    \leq
    \frac{4}{\pi}
    \LIP( u|_{E} ) \mathcal{H}^{2}( E ), 
    \quad\text{for every $E \subset X$}.
\end{equation}
As a preliminary result, we claim that
\begin{align}\label{eq:lemma:eilenberg:3}
    \Phi^{1,1}( E, u )
    \leq
    \frac{ 4 }{ \pi }
    \int_{ E }
        \lip(u|_{E})
    \,d\mathcal{H}^{2}, \quad\text{for any Borel $E \subset X$.}
\end{align}
Since $\H^2$ is locally finite, it suffices to establish \eqref{eq:lemma:eilenberg:3} for Borel sets $E \subset X$ with $\mathcal{H}^{2}( E ) < \infty$.

Next we adapt an argument from \cite[Lemma 3.157]{Sch:16:der}: for each $\epsilon > 0$ and $\eta > 0$, by Lusin--Egorov, there exist triples $( K_i, \lambda_i, r_i )_{ i = 1 }^{ \infty }$ such that
\begin{enumerate}
    \item $K_{i}$ are pairwise disjoint compact sets and $\mathcal{H}^2( E \setminus \bigcup_{ i } K_i ) = 0$;
    \item $\lambda_{i} \geq 0$;
    \item $\diam K_i < r_i<\eta$; and
    \item for each $(x,r) \in K_i \times ( 0, r_i ]$,
    \begin{equation*}
        \lambda_i - \epsilon
        \leq
        \sup_{ s \leq r } \sup_{ y \in E \cap B( x,s ) } \frac{ |u(y)-u(x)| }{ s }
        \leq
        \lambda_i \, .
    \end{equation*}
\end{enumerate}
The third and fourth points imply that $u|_{ K_{i} }$ is $\lambda_{i}$-Lipschitz in $K_i$. Then, by the countable subadditivity of $\Phi^{1,1}( \cdot, u )$, by (1), and \eqref{eq:lemma:eilenberg:2}, we obtain
\begin{align*}
    \Phi^{1,1}( E, u )
    &\leq
    \sum_{ i = 1 }^{ \infty }
    \frac{4}{\pi}
    \int_{ K_i }
        \epsilon + \sup_{ s \leq r_i } \sup_{ y \in E \cap B( x,s ) } \frac{ |u(y)-u(x)| }{ s }
    \,d\mathcal{H}^{2}
    \\
    &\leq
    \frac{4}{\pi} \epsilon \mathcal{H}^{2}( E )
    +
    \frac{4}{\pi}
    \int_{ E }
        \sup_{ s \leq \eta } \sup_{ y \in E \cap B( x,s ) } \frac{ |u(y)-u(x)| }{ s }
    \,d\mathcal{H}^{2}.
\end{align*}
Since the lower bound and the last upper bound are independent of the particular decomposition, we may pass to the limits $\epsilon \rightarrow 0^{+}$ and $\eta \rightarrow 0^{+}$, apply dominated convergence, and conclude \eqref{eq:lemma:eilenberg:3}.

Fix an arbitrary Borel set $A \subset X$. We apply \eqref{eq:lemma:eilenberg:3} for each Borel set $A \cap E_{i}$, for each $i \in \mathbb{N}$, where $( E_i )_{ i = 1 }^{ \infty }$ is the Borel decomposition from the assumptions. Given that $\lip(u|_{A\cap E_i})(x) \leq \lip(u|_{E_i})(x)$ for every $x \in A \cap E_i$, we conclude that
\begin{equation}\label{eq:lemma:eilenberg:3:mod}
    \Phi^{1,1}( A \cap E_i, u )
    \leq
    \frac{ 4 }{ \pi }
    \int_{ A \cap E_i }
        h
    \,d\mathcal{H}^{2}
    \quad\text{for every $i \in \mathbb{N}$.}
\end{equation}
Now \eqref{eq:lemma:eilenberg:1} and \eqref{eq:lemma:eilenberg:3:mod} yield that
\begin{equation*}
    \int_{\mathbb{R}}^{*} \mathcal{H}^{1}( u^{-1}(t) \cap A ) \,dt
    \leq
    \frac{4}{\pi}
    \int_{A} h \,d\mathcal{H}^2
    \quad\text{for every Borel $A \subset X$.}
\end{equation*}
This inequality completes the proof.
\end{proof}

We next show that the monotonicity condition cannot be disposed of in our main result, \Cref{co-inq-mono}, even in the Lipschitz category. Thus, in order to have the coarea inequality hold for all Lipschitz or Sobolev functions and their upper gradients one needs further geometric conditions on the space, such as the ones in \Cref{rem:PI}.
\begin{theorem}
\label{nonexample}
There exists a $2$-rectifiable metric surface $X \subset \mathbb{R}^3$ with $\H^2(X)<\infty$ and a closed subset $C \subset X$ such that, whenever $u\colon X\to \bbbr$ is Lipschitz and $1 \leq p \leq \infty$, the $p$-minimal upper gradient $\rho_{u,p}$ of $u$ satisfies
\begin{eqnarray*}
   \rho_{u,p}(x)= 0, \quad \text{for $\H^2$-a.e.\ $x\in C$}.
\end{eqnarray*}
Moreover, there exists a Lipschitz function $u \colon X \to \bbbr$ such that
\begin{equation}
    \label{eq10}
    \int_{\mathbb{R}} \H^1(u^{-1}(t)\cap C) \, dt\, >0 .
\end{equation}
In particular, any universal coarea inequality fails for the pair ($u,\rho_{u,p})$.

The constructed surface $X$ has the following property: When we apply \Cref{lemm:eilenberg:refined} with the Borel decomposition $E_1 = C$, $E_2 = X \setminus C$, we obtain that for every Lipschitz $u \colon X \rightarrow \mathbb{R}$ and every Borel set $E \subset X$
\begin{equation}\label{eq:coareaequality:sharp}
    \int_{\mathbb{R}} \H^1( u^{-1}(t)\cap E ) \, dt
    =
    \int_{ E } \chi_{ E_1 } \lip( u|_{ E_1 } )\,d\mathcal{H}^2 +  \int_{ E } \chi_{ E_2 } \rho_{u,p} \,d\mathcal{H}^2 \, ,
\end{equation}
for every $p \geq 1$. In fact, there exists a Lipschitz function $u$ satisfying conclusion \eqref{eq10} with $\lip( u|_{ E_1 } )(x) = 1$ for $\mathcal{H}^{2}$-almost every $x \in E_1$.
\end{theorem}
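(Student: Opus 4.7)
The plan is to construct a $2$-rectifiable metric surface $X \subset \mathbb{R}^3$ with $\mathcal{H}^2(X) < \infty$, together with a closed subset $C \subset X$ of positive $\mathcal{H}^2$-measure through which no non-constant rectifiable path passes. Once $(X,C)$ is in place, all claims of the theorem follow by relatively short arguments.

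\emph{Construction and key property.} Fix a fat Cantor set $K \subset [0,1]$ with $\mathcal{L}^1(K) > 0$ and set $C_0 := K \times K \subset [0,1]^2$, so $\mathcal{L}^2(C_0) = \mathcal{L}^1(K)^2 > 0$. Choose a continuous $h \colon [0,\infty) \to \mathbb{R}$, smooth on $(0,\infty)$, with $h(0) = 0$, bounded amplitude, and infinite total variation on every interval $[0,\epsilon]$, and take the oscillation amplitudes of $h$ small enough that $\phi(x,y) := h(\mathrm{dist}((x,y),C_0))$ satisfies $\int_{[0,1]^2}\sqrt{1+|\nabla\phi|^2}\,dA < \infty$. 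Let
\[
    X := \{(x,y,\phi(x,y)) : (x,y)\in[0,1]^2\} \subset \mathbb{R}^3,
    \qquad
    C := \{(x,y,0) : (x,y) \in C_0\}.
\]
The projection $\pi\colon X \to [0,1]^2$ is a homeomorphism whose restriction to $C$ is a bi-Lipschitz identification with $C_0$, so $X$ is a topological disk, $\mathcal{H}^2(X) < \infty$, $\mathcal{H}^2(C) > 0$, and $X$ is $2$-rectifiable (a smooth Lipschitz graph off $C$, planar on $C$). For the no-rectifiable-paths property, suppose $\gamma = (p,z) \colon [a,b] \to X$ were rectifiable, non-constant, with $\gamma(t_0) \in C$. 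Total disconnectedness of $C_0$ forces $p$ to leave $C_0$ on some subinterval $[t_0,t_1]$, and by the intermediate value theorem $\mathrm{dist}(p(\cdot), C_0)$ covers $[0,s]$ for some $s > 0$; then the $z$-coordinate $z(t) = h \circ \mathrm{dist}(p(t), C_0)$ has variation at least $\mathrm{Var}_{[0,s]}(h) = \infty$. Since the length of $\gamma$ dominates the variation of $z$, this contradicts rectifiability.

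\emph{Vanishing of $\rho_{u,p}$ on $C$, and the Lipschitz function.} For any Lipschitz $u \colon X \to \mathbb{R}$, set $\rho := \chi_{X \setminus C} \cdot \lip(u)$, which is Borel and bounded by $\LIP(u)$. Every rectifiable path $\gamma$ in $X$ is either constant (upper gradient inequality is trivial) or disjoint from $C$ by the property above (so $\rho = \lip(u)$ along $|\gamma|$, and the inequality holds since $\lip(u)$ is an upper gradient of the Lipschitz $u$); hence $\rho$ is an upper gradient, and by minimality $\rho_{u,p} \leq \rho$ $\mathcal{H}^2$-a.e., yielding $\rho_{u,p} = 0$ a.e.\ on $C$ for every $p \in [1,\infty]$. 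Take $u$ to be the restriction of $(x,y,z) \mapsto x$ to $X$, a $1$-Lipschitz function. For $t \in K$, $u^{-1}(t) \cap C$ corresponds bi-Lipschitzly to $\{t\} \times K$, so $\mathcal{H}^1(u^{-1}(t)\cap C) = \mathcal{L}^1(K) > 0$, whence
\[
    \int_{\mathbb{R}} \mathcal{H}^1(u^{-1}(t) \cap C)\,dt \;\geq\; \mathcal{L}^1(K)^2 > 0,
\]
proving \eqref{eq10}. By Lebesgue density in $K$, approaching $\mathcal{H}^2$-a.e.\ $(x,y,0) \in C$ through $(x_n,y,0) \in C$ with $x_n \in K$, $x_n \to x$, gives difference quotients equal to $1$, hence $\lip(u|_C) = 1$ $\mathcal{H}^2$-a.e.\ on $C$.

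\emph{Coarea equality and main obstacle.} For \eqref{eq:coareaequality:sharp}, decompose $E = (E \cap C) \sqcup (E \setminus C)$; $\mathcal{H}^1$-additivity on disjoint sets splits the left-hand side correspondingly. On $E \setminus C$, $X$ is locally a smooth Lipschitz graph, so $\rho_{u,p} = |\nabla u|$ and the Euclidean coarea formula (\Cref{co-fo}) provides equality. On $E \cap C$, identify $C$ bi-Lipschitzly with $C_0 \subset \mathbb{R}^2$, extend $u|_{C_0}$ by McShane to a Lipschitz $\tilde u \colon \mathbb{R}^2 \to \mathbb{R}$, apply the Euclidean coarea formula to $\tilde u$ restricted to $C_0$, and use that at $\mathcal{H}^2$-a.e.\ density point of $C_0$ the quantity $\lip(u|_C)$ coincides with $|\nabla \tilde u|$; adding the two contributions gives \eqref{eq:coareaequality:sharp}. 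The main technical hurdle is the construction of $\phi$: the infinite variation of $h$ near $0$ (required for non-rectifiability) must be reconciled with $|\nabla \phi|$ being integrable enough for $\mathcal{H}^2(X) < \infty$, despite the complement of $C_0$ accumulating on $C_0$ with significant density. A layered construction is expected to work, with oscillations at dyadic scales of amplitude and normal range tuned against the Minkowski content of $C_0$ near its effective boundary.
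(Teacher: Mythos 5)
Your overall strategy (a $2$-rectifiable subset of $\mathbb{R}^3$ containing a positive-area Cantor set $C$ that no nonconstant rectifiable path meets, then the projection $u(x,y,z)=x$ plus Fubini for \eqref{eq10}, and a split of \eqref{eq:coareaequality:sharp} into the smooth part and the planar part) is exactly the paper's strategy, and those latter steps are fine. The genuine gap is the construction itself: the specific ansatz $X=\mathrm{graph}(\phi)$ with $\phi(x,y)=h(\dist((x,y),C_0))$, $C_0=K\times K$, cannot satisfy simultaneously the two properties you need, namely $\mathrm{Var}_{[0,\epsilon]}(h)=\infty$ for every $\epsilon>0$ (which is what your non-rectifiability argument uses) and $\mathcal{H}^2(X)<\infty$. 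Indeed, for every $x\in K$ one has $\phi(x,t)=h(\dist(t,K))$, so as $t$ runs from a point of $K$ to the midpoint of an adjacent gap of length $g$, the function $t\mapsto\phi(x,t)$ has variation at least $\mathrm{Var}_{[0,g/2]}(h)=\infty$; hence every fiber of the $1$-Lipschitz projection $(x,y,z)\mapsto x$ over $x\in K$ contains a continuous graph of infinite length, so $\mathcal{H}^1\bigl(X\cap\{x=t\}\bigr)=\infty$ for all $t\in K$. Since $\mathcal{L}^1(K)>0$, Eilenberg's inequality (\Cref{lemm:eilenberg}) then forces
\begin{equation*}
  \frac{4}{\pi}\,\mathcal{H}^2(X)\;\geq\;\int_{\mathbb{R}}^{*}\mathcal{H}^1\bigl(X\cap\{x=t\}\bigr)\,dt\;\geq\;\int_{K}\infty\,dt\;=\;\infty .
\end{equation*}
(Equivalently, by the coarea formula for the distance function, $\mathcal{H}^2(X)\geq\int_0^{s_0}|h'(s)|\,\mathcal{H}^1(\{\dist(\cdot,C_0)=s\})\,ds\geq\mathcal{L}^1(K)\,\mathrm{Var}_{[0,s_0]}(h)$, because each level set $\{\dist=s\}$ projects onto $K$.) So the ``main technical hurdle'' you defer at the end is not a matter of tuning dyadic amplitudes against Minkowski content: no choice of $h$ works, because a profile depending on the distance alone spreads the oscillation over level sets whose length is bounded below by $\mathcal{L}^1(K)>0$.

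This is precisely what the paper's construction is designed to avoid: it concentrates the ``infinite oscillation'' on pieces of arbitrarily small area. The surface is built from the generations $E_n$ of the four-corner Cantor set placed at heights $1/n$, joined by cylinders that are chosen \emph{thin} (so the total area stays finite and even satisfies an upper mass bound) but made to \emph{spiral}, so that any curve passing from $E_n$ to $E_{n+k}$ has length tending to infinity with $k$; consequently no nonconstant rectifiable path reaches the Cantor set $C$ at height $0$, while $\mathcal{H}^2(X)<\infty$. Your remaining steps (minimal upper gradients vanish a.e.\ on $C$ since only constant rectifiable paths meet $C$; Fubini for the projection; coarea equality by treating $X\setminus C$ as a smooth surface and $C$ as a planar set, with $\lip(u|_C)=1$ a.e.\ for the projection) would then go through essentially as in the paper, but as written your proposal does not produce a valid pair $(X,C)$, so the theorem is not proved.
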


We begin by recalling a method for construction of some special metric surfaces that is of independent interest. The construction is fairly standard, see, e.g. \cite[4.2.25, pages 420-423]{Fed:69}, \cite{Haj:Zho:16}, \cite[Proposition 17.1]{Raj:17} for closely related constructions.

Recall the standard construction of the four-corner Cantor set with positive area. Let $E_1=[0,1]^2$ and $E_n$ be the union of the (closures) of the cubes that are removed at step $n$. For example, $E_2$ is the union of four subcubes near the corners of the unit square. We will arrange (copies of) $E_n, n=1,2,\cdots$ in certain ways inside $\bbbr^3$ and construct our space from them.

We connect $(E_1\setminus E_2)\times \{1\} \subset \bbbr^3$ to  $E_2\times \{\frac{1}{2}\}$ by gluing ``cylinders'' that connect components of $\partial E_2 \times \{1\}$ to the corresponding components of $\partial E_2\times \{\frac{1}{2}\}$. We want the cylinders to be disjoint and except for their ends, they are contained in $\bbbr^2\times (\frac{1}{2},1)$. We repeat this construction by connecting $(E_2\setminus E_3)\times \{\frac{1}{2}\}$ to  $E_3\times \{\frac{1}{3}\}$ in a similar fashion. 

Let $\widehat{X}$ be the closure of the union of all $( E_{i} \setminus E_{i+1} ) \times \left\{ \frac{1}{i} \right\}$ for $i\in \bbbn$, and all the cylinders during the construction. We equip ${X}$ with the Euclidean distance from $\bbbr^3$. Clearly, $\widehat{X}$ contains the Cantor set $ (\cap_{i\in \bbbn} E_i) \times \{0\}$. By avoiding self-intersections, we guarantee that
\begin{enumerate}
    \item The space $\widehat{X}$ is homeomorphic to $[0,1]^2$.
\end{enumerate}
To see this, notice that it is enough to find a homeomorphism with the Cantor set removed from each of the two spaces. But then, the existence of a homeomorphism follows from the fact that each stage of the construction retracts to the previous stage.

\noindent By choosing the cylinders as thin as we wish, we guarantee that
\begin{enumerate}\setcounter{enumi}{1}
    \item there exists a constant $A > 0$ so that 
    $$
    \H^2(\widehat{X}\cap B(x,r)) \leq Ar^2 \quad \text{for all $x\in \widehat{X}$ and all $r>0$.}
    $$
\end{enumerate}
We also modify each of the cylinders to make them spiral sufficiently so that 
\begin{enumerate}\setcounter{enumi}{2}
    \item the lengths of rectifiable curves in $\widehat{X}$ that join $E_n$ to $E_{n+k}$ go to infinity as $k\to \infty$.
\end{enumerate}
Finally, let $X$ be the metric surface obtained from $\widehat{X}$ by removing the boundary $\partial [0,1]^2 \times \left\{1\right\}$. With minor modifications we may also assume that
\begin{enumerate}\setcounter{enumi}{3}
\item $X\setminus C$ is smooth, where $C:= (\cap_{i\in \bbbn} E_i) \times \{0\}$ is the Cantor set.
\end{enumerate}
\begin{proof}[Proof of Theorem~\ref{nonexample}]
We first note that as $C$ is $2$-rectifiable and $X \setminus C$ smooth, the space $X$ is $2$-rectifiable. By property (3) above, there are no rectifiable curves that intersect $C$ other than the constant curves. This means that the minimal upper gradient of any function is zero (a.e.) on $C$. Therefore, if $\rho$ is the minimal upper gradient of some $u\colon X\to \bbbr$, then
\begin{equation}
    \label{zeroLHS}
    \int_C \rho \,d\H^2 = 0\, .
\end{equation}
Now, as $C$ is a four-corner Cantor set with $\H^2(C)>0$, and $u(x,y,z)=x$ is the orthogonal projection, then by Fubini's theorem
$$
 \int_{-\infty}^{\infty} \H^1(u^{-1}(t)\cap C) \, dt > 0\, .
$$
By \eqref{zeroLHS}, thus, the coarea inequality fails for the Lipschitz $u$ and any minimal upper gradient of it.

The equality \eqref{eq:coareaequality:sharp} follows for Borel sets $E \subset X \setminus C$ from the smoothness of $X \setminus C$. On the other hand, as $C \subset \mathbb{R}^2 \times \left\{0\right\}$, we may consider Borel subsets $E \subset C$ as a subset of the plane. The planar coarea formula then yields
\begin{equation*}
    \int_{\mathbb{R}} \H^1( u^{-1}(t)\cap E ) \, dt
    =
    \int_{ E } \chi_{ C } \lip( u|_{ C } ) \,d\mathcal{H}^2
    \quad\text{for every Borel $E \subset C$}.
\end{equation*}
Then \eqref{eq:coareaequality:sharp} follows by combining these two facts.
\end{proof}

\begin{remark}
The last paragraph of the proof of Theorem~\ref{nonexample} could also be argued using the coarea formula established by Ambrosio and Kirchheim, cf. \cite[Theorem 9.4]{AK:00}.
\end{remark}

\bibliographystyle{alpha}
\bibliography{Bibliography}

\end{document}